\newcommand*\linenomathpatch[1]{%
  \expandafter\pretocmd\csname #1\endcsname {\linenomath}{}{}%
  \expandafter\pretocmd\csname #1*\endcsname{\linenomath}{}{}%
  \expandafter\apptocmd\csname end#1\endcsname {\endlinenomath}{}{}%
  \expandafter\apptocmd\csname end#1*\endcsname{\endlinenomath}{}{}%
}
\newcommand*\linenomathpatchAMS[1]{%
  \expandafter\pretocmd\csname #1\endcsname {\linenomathAMS}{}{}%
  \expandafter\pretocmd\csname #1*\endcsname{\linenomathAMS}{}{}%
  \expandafter\apptocmd\csname end#1\endcsname {\endlinenomath}{}{}%
  \expandafter\apptocmd\csname end#1*\endcsname{\endlinenomath}{}{}%
}
  \let\linenomathAMS\linenomathWithnumbers
  \patchcmd\linenomathAMS{\advance\postdisplaypenalty\linenopenalty}{}{}{}
  \let\linenomathAMS\linenomathNonumbers
\DeclareFontFamily{OT1}{rsfs}{}
\DeclareFontShape{OT1}{rsfs}{n}{it}{<-> rsfs10}{}
\DeclareMathAlphabet{\mathscr}{OT1}{rsfs}{n}{it}
\theoremstyle{plain} %--- defaultv
\newtheorem{thm}{Theorem}[section]
\newtheorem{cor}[thm]{Corollary}
\newtheorem{prop}[thm]{Proposition}
\newtheorem{lem}[thm]{Lemma}
\theoremstyle{definition}
\newtheorem{defn}[thm]{Definition}
\newtheorem{exmp}[thm]{Example}
\theoremstyle{remark}
\newtheorem{rmk}[thm]{Remark}
\def\paragraph{\@startsection{paragraph}{4}%
  \z@\z@{-\fontdimen2\font}%
  {\normalfont\it}}
\DeclareMathOperator{\e}{\mathcal{E}}
\renewcommand{\and}{\qquad \text{and} \qquad}
\newcommand{\Z}{\mathds{Z}}
\newcommand{\Cn}{\Z/N\Z}
\newcommand{\R}{\mathds{R}}
\newcommand{\W}{\mathcal{W}}
\newcommand{\s}{\mathbf{s}}
\newcommand{\ep}{\varepsilon}
\renewcommand{\phi}{\varphi}
\newcommand{\ba}{\alpha_\star}
\renewcommand{\sp}{\,\mathbf{a}}
\newcommand{\charf}{\mathds{1}}
\newcommand{\mus}{\mu_{S,\sp}}
\newcommand{\mual}{{\mu_{s,\alpha}}}
\renewcommand{\musl}{{\mu_0}}
\newcommand{\pua}{{p_{N,\alpha}}}
\DeclarePairedDelimiter{\oldnormaux}{\bracevert}{\bracevert}
\NewDocumentCommand{\cl}{som}{%
  \IfBooleanTF{#1}
    {\oldnormaux*{#3}}
    {\IfNoValueTF{#2}
       {\oldnormaux*{\vphantom{dq}#3}}
       {\oldnormaux[#2]{#3}}%
    }%
}
\newcommand{\dia}{D_{S, \sp}}
\newcommand{\cna}{c_{N,\alpha}}
\newcommand{\cost}[1]{ ||#1||_{S,\sp}}
\newcommand{\ncost}[1]{ {|||#1|||_{S, \sp}} }
\newcommand{\floor}[1]{\left\lfloor #1 \right\rfloor}
\newcommand{\ceil}[1]{\lceil #1 \rceil}
\renewcommand{\angle}[1]{{\langle #1 \rangle}}
\newcommand{\aI}{ {\alpha_1} }
\newcommand{\aII}{ {\alpha_2} }
\newcommand{\aIII}{ {\alpha_3} }
\newcommand{\ai}{{\alpha_i}}
\newcommand{\dell}{ {d(\ell,k)} }
\def\centerarc[#1](#2)(#3:#4:#5)% Syntax: [draw options] (center) (initial angle:final angle:radius)
\DeclareMathOperator*{\argmin}{argmin}
\DeclareMathOperator{\var}{Var}
\numberwithin{equation}{section}
\title{Random walks on finite nilpotent groups driven by long-jump measures}
\author[L.~Saloff-Coste and Y.~Wang]{Laurent Saloff-Coste and Yuwen Wang}
\thanks{
  Y.W. is partially supported by NSF grant DMS-1645643 and Austrian Science Fund (FWF) project 34129.
  In addition, Y.W. and L.S-C. are both partially supported by NSF grant DMS-1707589.
}
\date{\today}
\begin{document}

%\setstretch{1.4}

\begin{abstract}
We consider a variant of simple random walk on a finite group.
At each step, we choose an element, $s$, from a set of generators (``directions'') uniformly, and an integer, $j$, from a power law distribution  (``speed'') associated with the chosen direction,
and move from the current position, $g$, to $gs^j$.
We show that if the finite group is nilpotent,
the time it takes this walk to reach its uniform equilibrium is of the same order of magnitude as the diameter of a suitable pseudo-metric on the group, which is attached the generators and speeds.
Additionally, we give sharp bounds on the $\ell^2$-distance between the distribution of the position of the walker and the stationary distribution, and compute the relevant diameter for some examples.
\end{abstract}

\maketitle

%\tableofcontents

\section{Introduction}
\label{sect:intro}

A probability measure $\mu$ on a finite group $G$ with identity $e$ induces a Markov kernel
$$K(x,y) = K(e,x^{-1}y) = \mu(x^{-1}y), \; x,y \in G.$$
The associated iterated kernel $K^n$ is then given by
$$K^n(x,y) = \mu^{(n)}(x^{-1}y),$$
where $\mu^{(n)}$ is the iterated convolution of $\mu$ with itself $n$ times.
In this paper, we consider only probability measures $\mu$ that are symmetric, i.e. $\mu(g) = \mu(g^{-1})$, irreducible and aperiodic. Consequently, $K^n(x,y)$ converges to $\pi(y)$ as $n$ goes to infinity, where $\pi$ is the uniform distribution on $G$.
A \emph{random walk on $G$ driven by $\mu$}
is a sequence of $G$-valued random variables $\{X_n\}_{n\geq 0}$ of the form
\[ X_n = \xi_0 \xi_1 \dotsb \xi_n, \]
where $\xi_0$ is the initial, possibly random, position and $(\xi_i)_{i \geq 1}$ is an i.i.d. sequence of random variables with common distribution $\mu$.
When $\xi_0$ is $e$, the distribution $X_n$ is $K^n(e,\cdot)$.
The mixing time of such a random walk is
\[ t_{mix}  = \min \{ n: ||K^n(e, \cdot) - \pi||_{TV}  \leq 1/4 \}. \]

In a simple random walk with respect to a set $S \subseteq G$, at each time step, the walker chooses $s$ uniformly from $S$, and steps from her current location, $g$, to $gs$.
In this paper, we consider a variant of this walk, where the walker may ``jump'' further away in the direction of $s$, not just take a single step.
Specifically, in a finite group $G$, let $S = (s_1, \dotsb, s_k)$ be a $k$-tuple of elements that generate $G$.
For each $i$, let $\mu_i$ be a symmetric distribution supported on the cyclic subgroup $\langle s_i \rangle$ and set
\begin{equation}\mu (g) = \frac{1}{k} \sum_{i = 1}^k \mu_i(g). \label{eqn:intro-mu} \end{equation}

One natural choice is to set $\mu_i$ to be the uniform measure on $\angle{s_i}$, for each $i$.
In this case, the probability given to $s_i$ by $\mu_i$ varies drastically depending on the order of $s_i$.
In this paper, we consider a model that is more regular, which we call \emph{long-jump random walk}, and is inspired by classical stable processes, see \cite[Ch.\ 6]{Fel1971:xxiv+669},
and approximation algorithms of convex bodies \cite{KanLovSim1997:1}.
Before defining the $\mu_i$'s that we will use for the rest of the paper, we describe a more intuitive wrap-around model that is comparable.
We associate with each $i$ in $\{1, \cdots, k\}$ a number $\alpha_i \in (0,2)$
and the probability distribution $q_i(x) = c_{\alpha_i} (1+|x|)^{-(1+\alpha_i)}$, $x \in \Z$.
After choosing $i$ uniformly in $\{1, \cdots, k\}$, $j$ is chosen
from the probability distribution $q_i$,
and, in this time step, the walker jumps from the current location $g$ to $gs^j_i$.
Thus,
for a fixed $i$, the smaller the $\alpha_i$ the larger the probability that a high power of $s_i$ is chosen.
We will actually work with the following variant.

\begin{defn}
    Let $G$ be a finite nilpotent group of nilpotency class $\ell$, $S$ be a $k$-tuple of elements that generate $G$ and $\sp = (\alpha_1, \dotsc, \alpha_k) \in (0,2)^k$.
  A \emph{long-jump measure} on $G$ is
  \begin{equation}
    \mus (g) = \frac{1}{k} \sum_{i = 1}^k \sum_{j \in \Z/N_i \Z} \charf_{s_i^j}(g) p_i(j),
  \end{equation}
  where $N_i$ is order of $s_i$ in $G$ and $p_i: \Z/N_i \Z \to \R$ is
  \begin{equation}
    p_i (j) = \frac{c_i}{(1+\cl{j})^{1+\alpha_i}}, \qquad \text{where $0 \leq j < N_i$, $\cl{j} = \min(j, N_i-j)$, } c_i^{-1} = \sum_{j \in \Z/N_i\Z} \frac{1}{(1+\cl j)^{1+\alpha_i}}.
    \label{eq:pi-defn}
  \end{equation}
  In addition we will use
  \begin{equation}
    \mus (g) = \frac{1}{k} \sum_{i = 1}^k \mu_i (g), \qquad \text{where } \mu_i(g) = \sum_{j \in \Z/N_i \Z} \charf_{s_i^j}(g) p_i(j).
    \label{eq:mui-defn}
  \end{equation}
  An \emph{$(S, \sp)$-long-jump random walk} on $G$ is a random walk driven by a long-jump measure $\mus$.
  \label{defn:longjump}
\end{defn}

\noindent The relationship between this definition and wrap-around model is discussed in Appendix \ref{sect:mus-equivalence}.

Fix a $C_0 >0$. When $G$ is a finite nilpotent group of class $\ell < C_0$ and generated by a symmetric set $S$ of size less than $C_0$, it is shown in \cite{DiaSal1994:1} that the mixing time of the simple random walk associated with the set $S$ is of the same order as the square of the diameter of $G$ with respect to $S$.
In this paper, we answer the following question:
\begin{center}
  \emph{Given the modification of long-jumps, what is the mixing time?}
\end{center}
This a challenging question even in the case when $G = \Cn$ as illustrated by the following example.

\begin{exmp}
  For any positive integer $t$,
  consider the cyclic group $\Cn$ with $N = t^5$.
  Let $S$ be the generating $2$-tuple $(1,s)$, where $s = t^4$.
  The simple random walk driven by the uniform measure on $\{\pm 1, \pm s\}$ mixes in order $D^2$ where $D$ is the diameter of the Cayley graph $(\Cn,\{\pm 1, \pm s\})$.
  The diameter $D$ is of order $N^{4/5}$ and the mixing time of the simple random walk is of order $N^{8/5}$.

  Now, consider the random walk driven by $\mus$ where $\sp = (\alpha,1)$ and $\alpha \in (0,2)$.
  Intuitively, as $\alpha$ decreases, since the variance of the length of the steps increases, one expects that the mixing time decreases.
  Indeed, following from the results of this paper, the mixing time is of order
  \begin{equation}
    \left\{
    \begin{array}{ll}
      N^{\alpha} & \text{for } 0 < \alpha \leq 1/5, \\
      N^{1/5} & \text{for } 1/5 \leq \alpha \leq 1/4, \\
      N^{4\alpha/5} & \text{for } 1/4 \leq \alpha < 2.
    \end{array}
    \right
    .
  \label{eqn:diamex}
  \end{equation}
\label{ex:intro}
\end{exmp}

Let us summarize what is behind this mixing time estimate.
For a general finite nilpotent group, $G$, and a probability measure $\mus$, we introduce a quasinorm $\cost{\cdot}$ and show that the associated random walk mixes in time of order $\dia$, the diameter of $G$ with respect to $\cost{\cdot}$. Even when $G = \Cn$, as in Example \ref{ex:intro}, computing the diameter $\dia$ is a non-trivial task, see Appendix \ref{sect:algcycle}. The estimate (\ref{eqn:diamex}) is obtained using
this method.

We now explain how to associate with $(S, \sp)$ a quasi-norm on $G$.
Referring to the notation in Definition \ref{defn:longjump},
from $S = (s_1, s_2, \dotsc, s_k)$, create a formal alphabet $\def\s{\mathbf{s}} \mathcal{S} = \{\s_1^{\pm 1}, \s_2^{\pm 1}, \dotsc, \s_k^{\pm 1}\}$.
Let $\mathcal{W}$ be the set of finite words generated by $\mathcal{S}$ and $\def\deg{{\textrm{deg}}} \deg_{\s_i}(w)$ be the number of times either $\s_i^{+1}$ or $\s_i^{-1}$ appears in the word $w$.
There is natural projection map $\rho$ from $\mathcal{W} \to G$, mapping $\s_i^{\pm 1}$ to $s_i^
{\pm 1}$, $i = 1, \dotsc, k$.
For example, $w_1 = \s_1^{+1}$ and $w_2 = \s_1^{+1} \s_1^{-1} \s_1^{-1}$ both map to $s_1$ under $\rho$. However, their degrees with respect to $\s_1$ differs:
\begin{equation*}
\deg_{\s_1} (w_1) = 1 \qquad\qquad \deg_{\s_1} (w_2) = 3.
\end{equation*}
For $\sp \in (0,2)^k$, define the \emph{cost} of $g$ to be
\begin{equation}
  \cost{g} = \min_{w \in \mathcal{W}:g = \rho(w)} \left\{ \max_{i} \left\{ (\deg_{\s_i} (w))^{\alpha_{i}} \right\} \right\}.
  \label{eqn:cost}
\end{equation}
The function $\cost{\cdot}: G \to \R_+$ is a \emph{quasi-norm} on $G$: it only satisfies the triangle inequality up to a multiplicative constant of $2$ because we assume $\alpha_i \in (0,2)$.
We define $\dia$ to be the diameter of the quasi-norm $\cost{\cdot}$,
that is, the largest $\cost g$ can be when $g$ varies over $G$.

We are ready to state the main results of the article, which relate the spectral gap and
mixing time of $(S,\sp)$-long jump random walks to $\dia$.
%Returning to our example, the mixing time estimate stated in (\ref{eqn:diamex}) is obtained by computing $\dia$.
Because the long-jump random walk is symmetric, its spectrum has the form
\[ -1 \leq \beta_{min} \leq \dotsb \leq \beta_1 < \beta_0 = 1.\]
The eigenvalue $\beta_{\min}$ is bounded away from $-1$ by a constant, see (\ref{eqn:betamin}) below. Concerning $\beta_1$ and mixing time, we prove the following theorems.

\begin{thm}
  Fix $C_0 > 0$ and  $0 < \ep < 1$. There exist $c_1, c_2 > 0$ (depending on $C_0$ and $\ep$) such that
  for any $\ell, k < C_0$, $\sp \in (\ep, 2-\ep)^k$, and any finite nilpotent group $G$ of nilpotency class $\ell$ generated by a $k$-tuple $S$,
  the $(S,\sp)$-long jump random walk satisfies
   \[ c_1 / \dia \leq 1 - \beta_1 \leq c_2/\dia. \]
   \label{thm:intro-gap}
\end{thm}

\begin{thm}
  Fix $C_0 > 0$ and  $0 < \ep < 1$. There exist $c_1, c_2 > 0$ (depending on $C_0$ and $\ep$) such that
  for any $\ell, k < C_0$, $\sp \in (\ep, 2-\ep)^k$, and any finite nilpotent group $G$ of nilpotency class $\ell$ generated by a $k$-tuple $S$,
  the $(S,\sp)$-long jump random walk satisfies
  \[ e^{- c_1 n / \dia} \leq || K^n(e, \cdot) - \pi||_{T.V.} \leq e^{-c_2 n /\dia}\]
  for all $n > 0$.
  In particular, $t_{mix}$ is of the same order as $\dia.$
  \label{thm:intro-mixing}
\end{thm}
\noindent Although the result stated here is with respect to the total variation norm, throughout the paper we will work with $\ell^2$, which will give more quantitative information; see Theorem \ref{thm:l2mixing}. \\

The main results above rely on volume growth properties of $\cost{\cdot}$, which we describe now. For a given quasi-norm $||\cdot||$, define
\begin{align*}
  B(x,r) = \{y \in G: ||x^{-1}y|| \leq r \} \quad \text{and} \quad
  V(x,r) = \sum_{y \in B(x,r)} \pi(y),
\end{align*}
where $\pi$ is the uniform measure on $G$.

\begin{defn}
    \label{defn:intro-doubling}
    A finite group $G$ equipped with a quasi-norm $||\cdot||$ is \emph{doubling} if there exists $A > 1$ so that
    \[V(2r) \leq A V (r), \qquad \text{for all $r \geq 0$.} \]
    We will call $A$ a \emph{doubling constant} for the pair $G$ and $||\cdot||$.
\end{defn}

\begin{thm}
Fix $C_0 > 0$ and  $0 < \ep < 1$. There exists $A> 0$ (depending on $C_0$ and $\ep$) such that
  for any $\ell, k < C_0$, $\sp \in (\ep, 2)^k$, and any finite nilpotent group $G$ of nilpotency class $\ell$ generated by a $k$-tuple $S$,
  the group $G$ equipped with $\cost{\cdot}$ is doubling with constant at most $A$.
\label{thm:intro-doubling}
\end{thm}

\paragraph{Organization} This paper is an extension of work done in \cite{SalZhe2015:1047, CheKum2018}; these papers are concerned with infinite groups, whereas this paper studies finite groups. Many of the techniques used in here take inspiration from proofs from those two papers; we will give specific citations as we use them.
In Section \ref{sect:gap}, we start by proving Theorem \ref{thm:intro-gap}. We prove the upper bound by using a pseudo-Poincar\'e inequality, where we rely heavily on results developed in \cite{SalZhe2015:1047}. For the lower bound, we use the Courant-Fischer characterization of $\beta_1$ with a test function and bounds that are similar to those in \cite{CheKum2018}.
In Section \ref{sect:vol}, we prove the doubling property, i.e., Theorem \ref{thm:intro-doubling} using growth results from \cite{SalZhe2015:1047} for free nilpotent groups and a lemma of \cite{Gui1973:379} to transport the result to finite nilpotent groups.
In Section \ref{sect:mixing}, we give precise mixing $\ell^2$-estimates, which gives a proof of Theorem \ref{thm:intro-mixing} as a corollary. For these results, we use now standard techniques of Nash inequalities developed in \cite{DiaSal1994:1,DiaSal1996:459}.
The $\ell^2$-mixing upper bound for time less than $\dia$ uses intermediate Nash inequalities.
The matching lower bound uses the spectral lower bound on balls from Section \ref{sect:gap} and relate it to $\mus^{(n)}$ by using an argument inspired by proofs from \cite{CouGri1997:133,SalZhe2016:4133}.

In Section \ref{sect:dia}, we discuss some of the challenges in computing $\dia$ in general by providing some illustrative examples.
Up until this point in the paper, we have assumed that $\sp \in (0,2)^k$. In Section \ref{sect:difalphas}, we explain how to generalize the main results when $\sp \in (0,\infty)^k$ by changing the definition of $\cost{\cdot}$

In Appendix \ref{sect:mus-equivalence}, we discuss the relationship between the intuitive model presented in paragraph 2 of the introduction and the model that we presented in Definition \ref{defn:longjump} with which we work throughout the paper. In Appendix \ref{sect:dirform}, we outline properties of the Dirichlet form and present explicit computations for the more laborious bounds used in Section \ref{sect:gap}. In Appendix \ref{sect:algcycle}, we present an algorithm for computing $\dia$ when $G = \Cn$, $S = (1,s)$ and $\sp = (\aI, \aII)$, and a proof of its correctness. We use this algorithm for many of the examples in Section \ref{sect:dia}.
\\

\paragraph{Notation}
\label{para:notation}
We conclude the introduction with some notation that we will use in the paper.

When $G$ is the cyclic group $\Cn$, and $g \in \Cn$ is represented as a number in $[0,N-1]$, it will be convenient to define $\left \bracevert g \right \bracevert = \min( |g|, |N-g|)$.

All Markov kernels $K:G \times G \to \R$ considered in this paper are symmetric and irreducible, and their stationary distributions $\pi$ are uniform on $G$.
Note that for random walks on groups driven by $\mu$, $K(x,y) = K(e, x^{-1}y) = \mu(x^{-1}y)$.
Define $Kf(x) = \sum_{y \in G} K(x,y) f(y)$.
The corresponding continuous-time Markov chain has kernel $H_t = e^{-t(I-K)} = e^{-t}  \sum_{n=0}^\infty \frac{t^n}{n!} K^n$.
Let $k^n_e (x) = K^n(e,x)/\pi(x)$ and $h_t^e (x) = H_t(e,x)/\pi(x)$ be the densities with respect to $\pi$ of the discrete- and continuous-time kernels.

The space $\ell^p(\pi)$ is the set of functions from $G$ to $\R$ under the norm
$$||f||_p = \left(\sum_{x \in G} |f(x)|^p \pi(x) \right)^{1/p}$$ if $p \geq 1$ and
$||f||_\infty = \sup_{x\in G} |f(x)|.$
Given $p,q \in [1,\infty]$ and $K: \ell^p(\pi) \to \ell^q(\pi)$, define
$$||K||_{p \to q} = \sup_{f \in \ell^p(\pi)} \left \{ \frac {||Kf||_q}{||f||_p} \right \}.$$
The inner product on $\ell^2(\pi)$ we will use is $\angle{f,g}_\pi = \sum_x f(x) g(x) \pi(x)$.
The Dirichlet form associated with $\mu$ on $\ell^2(\pi)$ is
$$\e_\mu(f,g) = \angle{(I-K)f,g} = \frac12 \sum_{x,y} (f(x) - f(xy))(g(x)-g(xy)) \mu(y) \pi(x).$$
The relation $f \asymp g$, where $f$ and $g$ are positive functions,
means that there exist constants $c_1, c_2 > 0$ so that $c_1 f \leq g \leq c_2 f$.

\section{Spectral gap estimates}
\label{sect:gap}

The main tool we use to study the spectral gap is the Dirichlet form.
It is related to the spectral gap by
\begin{equation}
  1-\beta_1 = \min_{\substack{\angle{f,\textbf{1}}=0 \\ f \neq 0}}
  \left\{ \frac{\e_\mu(f,f)}{||f||_2^2} \right\} = \min_{f \neq 0 } \left\{ \frac{\e_\mu(f,f)}{\var_\pi(f)} \right\},
  \label{eqn:gap}
\end{equation}
which is explained in \cite[Section 2]{Saloff-Coste1997}.
Moreover, the form is linear in $\mu$, so bounds for $\e_{p_i}$ can be aggregated to a bound for $\e_{\mus}$. The details of these computations are included in Appendix \ref{sect:dirform}.

Define $\alpha_* =  \min_{\alpha \in \sp} \frac{\alpha}{2(1+\alpha)}$. It follows from Lemma \ref{lem:cna_indep} that
  $$\alpha_*
    \leq \min_{1 \leq i \leq k} c_i
    \leq \min_{1 \leq i \leq k} p_i (e) \leq \mus(e).$$
By a standard bound, see e.g. Theorem 6.6 of \cite{Sal2004:263},
\begin{equation}
  \label{eqn:betamin}
  \beta_{min} \geq 2\mus(e)-1 \geq 2\alpha_*-1.
\end{equation}

\subsection{Spectral gap lower bound}
A probability distribution on $G$, $\mu$, satisfies \emph{the pseudo-Poincar\'e inequality} if,
for any $r > 0$,
there exists
$a(r) > 0$ such that, for all $f \in \ell^2(\pi)$,
\[ ||f-f_r||_2^2 \leq a(r) \e_\mu(f,f), \]
where
\begin{equation}
f_r (x) = \frac{1}{V(x,r)} \sum_{y \in B(x,r)} f(y) \pi(y)
\and V(x,r) = \sum_{y \in B(x,r) } \pi (y).
\label{eqn:defnfr}
\end{equation}
We need the following result regarding nilpotent groups.
\begin{thm}\cite[Theorem 2.10] {SalZhe2015:1047}
  Let $k$ and $\ell$ be positive integers and $\sp \in (0,2)^k$. There exist $C = C(\ell, k, \sp)$, $p = p(\ell, k, \sp)$, and $(i_1, \dotsc, i_p) \in \{1, \dots, k\}^p$ so that for any finite nilpotent group $G$ of class $\ell$ and generating $k$-tuple $S$,
any $g \in G$ with $\cost{g} \leq r$ can be written as
\begin{equation*}
  g = \prod_{j=1}^p s_{i_j}^{m_j} \quad \text{with } |m_j| \leq Cr^{1/\alpha_{i_j}}.
\end{equation*}
\label{thm:rewritingwords}
\end{thm}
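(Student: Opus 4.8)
The plan is to lift the statement to the free nilpotent group, run there a quantitative commutator collection process, and push the result to $G$. Let $F=F_{k,\ell}$ be the free nilpotent group of class $\ell$ on free generators $\s_1,\dots,\s_k$ and $\pi_F\colon\W\to F$ the tautological map; since $G$ is $k$-generated and satisfies the class-$\ell$ laws there is a surjection $\phi\colon F\to G$ with $\phi(\s_i)=s_i$ and $\rho=\phi\circ\pi_F$. Put $R_i:=r^{1/\alpha_i}$ and assume $r\ge 1$ (otherwise $g=e$). If $\cost g\le r$, pick $w\in\W$ with $\rho(w)=g$ and $\deg_{\s_i}(w)\le R_i$ for all $i$; then $u:=\pi_F(w)$ has $\phi(u)=g$ and \emph{content} $\le(R_1,\dots,R_k)$, meaning $u$ is a product of the $\s_i^{\pm1}$ in which $\s_i^{\pm1}$ occurs at most $R_i$ times. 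So it suffices to prove, uniformly in $F$: there are $p=p(k,\ell)$, a list $(i_1,\dots,i_p)\in[k]^p$ and $C=C(k,\ell)$ such that every $u\in F$ of content $\le(R_i)$ (any $R_i\ge 1$) can be written $u=\prod_{j=1}^p\s_{i_j}^{m_j}$ with $|m_j|\le CR_{i_j}$. Applying $\phi$ then yields $g=\prod_{j=1}^p s_{i_j}^{m_j}$ with $|m_j|\le CR_{i_j}=Cr^{1/\alpha_{i_j}}$, the data depending only on $k,\ell$ (hence on $\ell,k,\sp$).

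I would prove the free statement by induction on $\ell$. The case $\ell=1$ is the abelian identity $u=\s_1^{m_1}\cdots\s_k^{m_k}$, $|m_i|\le R_i$. For the step, set $Z:=\gamma_\ell(F)$, which is central and free abelian, with $F/Z\cong F_{k,\ell-1}$. The induction hypothesis in $F/Z$ applied to $\bar u$, followed by a lift, yields a factorization $u=v\cdot z$ with $v$ a bounded product of powers $\s_i^{n}$, $|n|\le C'R_i$, and $z\in Z$; then $z=v^{-1}u$ has content $\le(C_1R_i)$ for some $C_1=C_1(k,\ell)$. Since $Z$ is free abelian, $z=\prod_{c\in\mathcal{B}_\ell}c^{e_c}$, where $\mathcal{B}_\ell$ is the fixed finite set of basic commutators of weight $\ell$, of multidegrees $\delta(c)\in\N^k$ with $\sum_i\delta_i(c)=\ell$; by the multigraded form of the collection process (standard, and developed in \cite{SalZhe2015:1047}; at bottom the Magnus-embedding bound, which controls the weight-$\ell$ coordinate of a word having $n_i$ occurrences of $\s_i$ by $C\prod_i n_i^{\delta_i(c)}$) one gets $|e_c|\le C_2\prod_i R_i^{\delta_i(c)}$. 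It remains to express each $c^{e_c}$ as a bounded product of powers $\s_i^{n}$ with $|n|\le C_3R_i$; these may then be concatenated (they commute, lying in $Z$) and $v$ prepended.

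The core is a \emph{spreading lemma}, proved by a second induction on commutator weight: for a basic commutator $b$ of weight $2\le m\le\ell$ and any integer $t$ with $|t|\le\prod_i R_i^{\delta_i(b)}$, one can write $b^{\,t}=H\cdot\zeta$ with $\zeta\in\gamma_{m+1}(F)$ and $H$ a product of at most $q_m=q_m(k,\ell)$ powers $\s_i^{n}$, $|n|\le C_3R_i$, involving only generators occurring in $b$. Writing $b=[b',\s_c]$ with $b'$ a basic commutator of weight $m-1$, split $t=t'R_c+t_0$ with $0\le t_0<R_c$, so $|t'|\le\prod_iR_i^{\delta_i(b')}$ (the monomials match: $\prod_iR_i^{\delta_i(b)}=R_c\prod_iR_i^{\delta_i(b')}$); invoke the commutator identity $[b'^{\,t'},\s_c^{R_c}]\equiv[b',\s_c]^{t'R_c}\pmod{\gamma_{m+1}(F)}$ valid in $F$ (from the Hall--Petrescu collection formula; the omitted factor is a product of higher-weight commutators in $b',\s_c$ with exponents polynomial in $t',R_c$ of the matching multidegrees); spread $b'^{\,t'}=H'\zeta'$ by the induction hypothesis (base case $m=2$: $b'=\s_i$ and $b'^{\,t'}=\s_i^{t'}$ is already a single power); and rearrange, using that $[\,\cdot\,,\s_c^{R_c}]$ and conjugation by $\gamma_m(F)$-elements carry $\gamma_{m+1}(F)$ into itself, to obtain $b^{\,t}=H\cdot\zeta$ with $H=(H')^{-1}\s_c^{-R_c}H'\s_c^{R_c}\cdot(b')^{-1}\s_c^{-t_0}b'\s_c^{t_0}$ (reading $b'$ as its defining commutator word of bounded length) and $\zeta\in\gamma_{m+1}(F)$. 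Finally, for $c=[b',\s_a]\in\mathcal{B}_\ell$ and $e_c=tR_a+e_0$ with $0\le e_0<R_a$: at the top weight the identity is exact, so $c^{e_c}=[b'^{\,t},\s_a^{R_a}]\cdot[b',\s_a^{e_0}]$, and writing $b'^{\,t}=H\zeta$ with $\zeta\in\gamma_\ell(F)=Z$ \emph{central}, the error cancels, $[b'^{\,t},\s_a^{R_a}]=[H\zeta,\s_a^{R_a}]=[H,\s_a^{R_a}]=H^{-1}\s_a^{-R_a}H\s_a^{R_a}$; hence $c^{e_c}$ is a bounded product of powers $\s_i^{n}$, $|n|\le C_3R_i$, which closes both inductions.

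The step I expect to be the main obstacle is precisely arranging the two inductions so that they do not chase their tails: read naively, ``to spread $c^{e}$ one must spread $b'^{\,t}$, which produces commutators of yet higher weight, which must again be spread,'' and nothing terminates. What makes it work is that every error generated sits one weight-level up, in $\gamma_{m+1}(F)$, where it is inert when inserted into an outer commutator $[\,\cdot\,,\s^{R}]$; and at the top level $\gamma_\ell(F)$ is central, so the error disappears outright. The rest is the expected but fussy bookkeeping: checking that the digit splittings $t=t'R+t_0$ reproduce exactly the monomials $\prod_iR_i^{\delta_i(\cdot)}$ appearing in the Hall--Petrescu identity and in the collection bound, and that the syllable count $p$ and the constant $C$ stay controlled by $k$ and $\ell$ alone (each level of the recursion multiplies the syllable count by an $\ell$-dependent factor and leaves $C$ unchanged, since the new exponents introduced are always $\le R_i$).
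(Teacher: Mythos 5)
Your reduction to the free nilpotent group is exactly the paper's: both arguments take a word realizing $\cost{g}$, read it in $\hat G = N(\ell,k)$, rewrite it there with a fixed syllable pattern, and push the resulting identity through the homomorphism $\phi$ with $\phi(\s_i)=s_i$. The difference is what happens on $\hat G$: the paper simply invokes Theorem 2.10 of \cite{SalZhe2015:1047}, whereas you re-prove that rewriting theorem by a quantitative collection argument (induction on the class, Magnus-type bounds on the weight-$\ell$ coordinates, Hall--Petrescu congruences, and a spreading lemma whose error terms climb one weight level and are killed by centrality at the top). That mechanism is the right one and your sketch is essentially sound; the citation buys the paper brevity and uniform constants for free, while your route makes the ingredient self-contained. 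Two repairs are needed if you flesh it out: (i) for $m\ge 4$ not every basic (Hall) commutator has the form $[b',\s_c]$ with $\s_c$ a single generator, so either run the spreading recursion on left-normed simple commutators, which do generate $\gamma_\ell/\gamma_{\ell+1}$ and whose multidegree-respecting change of coordinates only costs a constant depending on $k,\ell$, or extend the lemma to $[b_1,b_2]$ with both entries of higher weight; (ii) the quantities $R_i=r^{1/\alpha_i}$ are not integers, so the digit splittings and the exponents in $\s_c^{R_c}$ should use $\lceil R_i\rceil$, which only perturbs constants. Note finally that you prove $|m_j|\le Cr^{1/\alpha_{i_j}}$, which is the bound the paper actually uses when applying this theorem in Theorem \ref{thm:nilpoincare} and the one consistent with the definition of $\cost{\cdot}$; the exponent $r^{\alpha_{i_j}}$ displayed in the statement appears to be a typo, so no points lost there.
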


\begin{proof}
  Let $\hat G = N(\ell, k)$ be the free nilpotent group of nilpotency class $\ell$ and generated by $S$. Theorem 2.10 of \cite{SalZhe2015:1047} states that there exist an integer $p = p(\ell,k,\sp)$, a constant $C = C(\ell,k,\sp)$, and $(i_1, \dotsc, i_p) \in [k]^p$, such that for all $\hat g \in \hat G$ that can be expressed a word $\hat w$ where $\deg_{\s_i} \hat w \leq r^{1/\alpha_i}$, $\hat g$ can be rewritten as
  \begin{equation}\label{eqn:rewritefree} \hat g = \prod_{j=1}^p s_{i_j}^{m_j} \quad \text{with } |m_j| \leq C r^{\alpha_{i_j}}. \end{equation}

  Define $\rho$ and $\hat \rho$ be the projections maps from $\W$ to $G$ and $\hat G$ respectively, mapping $\s_i \to s_i$.
  There exists a group homomorphism $\phi$ so that the following diagram commutes, i.e., such that $\phi(s_i) = s_i$ for all $i$.
  \begin{center}
      \begin{tikzcd}
          & & \hat G = N(\ell,k) \arrow[d,"\phi"]\\
          S \arrow[hook, r, "i"] & \W \arrow[ru,"\hat \rho"] \arrow[r,"\rho"] & G
      \end{tikzcd}
  \end{center}
  Let $g \in G$ be an element satisfying the conditions in the theorem. Let $w_0$ be a word that realizes $\cost{g}$ in the sense of (\ref{eqn:cost}), and $\hat g = \hat \rho(w_0)$. Since for all $i$, $\deg_{\s_i}  w_0 \leq r^{1/\alpha_{i}}$, there exist $p(\ell, k ,\sp)$, $C(\ell,k,\sp)$ and $(i_1, \dotsc, i_p)$ so that (\ref{eqn:rewritefree}) is satisfied. After applying $\phi$ to both sides, we get the desired result.
\end{proof}

\begin{thm} \cite[Theorem 4.3]{SalZhe2015:1047}
  Let $k$, $\ell$, and $\sp \in (0, 2)^k$ be fixed. There exists a constant $a = a(k, \ell, \sp)$ such that for any long-jump random measure $\mus$ on a finite nilpotent group $G$ of class $\ell$ with $|S|=k$ and $f: G \to \R$,
  \[||f-f_r||_2^2 \leq a r \e_{\mus} (f,f). \]
  \label{thm:nilpoincare}
\end{thm}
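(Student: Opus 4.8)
The plan is to reduce the bound on $\|f - f_r\|_2^2$ to a telescoping estimate along a short product decomposition of each group element, using Theorem \ref{thm:rewritingwords} to control the length of that decomposition, and then controlling each single-generator move by the one-dimensional Dirichlet form $\e_{\mu_i}$. First I would expand
\[
\|f - f_r\|_2^2 = \sum_x \pi(x)\Bigl| f(x) - \frac{1}{V(x,r)}\sum_{y\in B(x,r)}\pi(y) f(y)\Bigr|^2
\le \sum_x \pi(x)\frac{1}{V(x,r)}\sum_{y\in B(x,r)}\pi(y)\,|f(x)-f(y)|^2,
\]
by Jensen's inequality. Writing $y = xg$ with $\cost{g} \le r$, the inner sum becomes an average over $g\in B(e,r)$ of $|f(x) - f(xg)|^2$, so it suffices to bound $\sum_x \pi(x)|f(x) - f(xg)|^2$ by $C r\,\e_{\mus}(f,f)$ uniformly over $g$ with $\cost{g}\le r$.

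Next, for such a fixed $g$, I would apply Theorem \ref{thm:rewritingwords}: there is a fixed length $p = p(\ell,k,\sp)$, a fixed sequence $(i_1,\dots,i_p)\in[k]^p$, a constant $C$, and exponents $m_1,\dots,m_p$ with $|m_j|\le C r^{\alpha_{i_j}}$ so that $g = \prod_{j=1}^p s_{i_j}^{m_j}$. Setting $g_0 = e$ and $g_t = \prod_{j\le t} s_{i_j}^{m_j}$, telescoping and Cauchy–Schwarz over the $p$ steps give
\[
\sum_x \pi(x)\,|f(x) - f(xg)|^2 \le p \sum_{t=1}^p \sum_x \pi(x)\,\bigl|f(xg_{t-1}) - f(xg_{t-1}s_{i_t}^{m_t})\bigr|^2
= p\sum_{t=1}^p \sum_x \pi(x)\,\bigl|f(x) - f(x s_{i_t}^{m_t})\bigr|^2,
\]
where the last equality uses that $\pi$ is invariant and we re-index $x\mapsto xg_{t-1}^{-1}$. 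So the problem is reduced to the single-generator estimate: for each $i$ and each integer $m$ with $|m|\le Cr^{\alpha_i}$,
\[
\sum_x \pi(x)\,|f(x) - f(x s_i^m)|^2 \le C' r\, \e_{\mu_i}(f,f) \le C' k\, r\, \e_{\mus}(f,f),
\]
the last step because $\e_\mu$ is linear in $\mu$ and $\mus = \tfrac1k\sum_i\mu_i$, so $\e_{\mu_i}(f,f)\le k\,\e_{\mus}(f,f)$.

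The remaining single-generator inequality is essentially the cyclic-group (one-dimensional long-jump) pseudo-Poincaré estimate, and I expect this to be the technical heart of the argument. Here one writes $s_i^m$ as a sum of $|m|$ unit steps $s_i$ and telescopes again, but now the naive Cauchy–Schwarz over $|m| \sim r^{\alpha_i}$ steps loses a factor $r^{\alpha_i}$, which is too lossy when $\alpha_i$ is close to $2$; instead one must exploit that $\mu_i$ itself has long jumps, i.e. the tail $p_i(j)\asymp (1+|j|)^{-(1+\alpha_i)}$, and compare $|f(x)-f(xs_i^m)|^2$ against $\sum_{j} |f(x)-f(xs_i^j)|^2 p_i(j)$ summed over a range of $j$ comparable to $m$. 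This is exactly the kind of one-dimensional computation carried out in \cite{SalZhe2015:1047} (and analogous to Theorem 4.3 there); the bound one needs is $\sum_x \pi(x)|f(x)-f(xs_i^m)|^2 \le C |m|^{1/\alpha_i \cdot \alpha_i}$-type scaling, more precisely of order $|m|^{\alpha_i}$-to-$r$ once $|m|\le Cr^{\alpha_i}$, yielding the linear-in-$r$ bound. I would either cite the relevant one-dimensional lemma from \cite{SalZhe2015:1047} directly or reproduce its short proof in Appendix \ref{sect:dirform}, tracking that all constants depend only on $k,\ell,\sp$ as claimed. Combining the three reductions — Jensen, the $p$-step decomposition from Theorem \ref{thm:rewritingwords}, and the single-generator estimate — gives $\|f-f_r\|_2^2 \le a r\,\e_{\mus}(f,f)$ with $a = a(k,\ell,\sp)$, as desired.
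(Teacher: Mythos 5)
Your proposal matches the paper's proof essentially step for step: the Jensen-type reduction of $\|f-f_r\|_2^2$ to a uniform translation bound is Proposition \ref{prop:dirprops}(1), the $p$-step decomposition is Theorem \ref{thm:rewritingwords} followed by the same telescoping/Cauchy--Schwarz argument, the single-generator inequality you defer to is exactly Theorem \ref{thm:embedded} (proved in Appendix \ref{sect:dirform} via the regularity of $p_i$), and the final aggregation uses $\e_{\mu_i}\le k\,\e_{\mus}$ just as in the paper. The only caveat is exponent bookkeeping: with $\cost{\cdot}$ as in (\ref{eqn:cost}) the correct degree bound is $|m_j|^{\alpha_{i_j}}\lesssim r$, i.e.\ $|m_j|\lesssim r^{1/\alpha_{i_j}}$ (which is what the paper's own proof uses, and what makes the one-dimensional bound of order $|m|^{\alpha_i}\,\e_{\mu_i}(f,f)\lesssim r\,\e_{\mu_i}(f,f)$ come out linear in $r$); you copied the $r^{\alpha_{i_j}}$ normalization from the statement of Theorem \ref{thm:rewritingwords}, so this is an inherited typo rather than a gap in your argument.
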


\begin{proof}
  Fix $r > 0$,  $y_0 \in B(e,r)$ and $w_0 \in \mathcal{W}$ so that $w_0$ realizes $\cost{y_0}$. By Theorem \ref{thm:rewritingwords}, there exists $C_0 = C_0(\ell, k, \sp)$, $p = p(\ell, k , \sp)$ and $(i_1, \dotsc, i_p ) \in [k]^p$ so that $y_0$ can be written as $y_0 = s_{i_1}^{m_1} \dotsb s_{i_p}^{m_p}$ where $|m_j| \leq C_0 r^{1/\alpha_{i_j}}$.
  For each $j$, Theorem \ref{thm:embedded} gives the existence of a constant $C_1(\alpha_{i_j}) > 0$ so that, for all $f: G \to \R$ and $m \in \Z/N_{i_j} \Z$ where $|m|^{\alpha_{i_j}} \leq C_0^{\alpha_{i_j} } r$,
   \begin{equation}
     \frac 1 {|G|} \sum_{x \in G} |f(x)-f(xs^{m})|^2 \leq C_1(\alpha_{i_j}) C_0^{\alpha_{i_j}} r \e_{\mu_{i_j}} (f,f).
     \label{eqn:orbitp}
   \end{equation}
  By Theorem \ref{prop:dirprops} (2), for all $f \in \ell^2(\pi)$,
  \begin{align*}
   \frac{1}{|G|} \sum_{x\in G} |f(x) - f(xy_0)|^2 &= \frac{1}{|G|} \sum_{x \in G} |f(x) - f(x s_{i_1}^{m_1} \dotsb s_{i_p}^{m_p})|^2\\
   &\leq \frac{p}{|G|} \sum_{j=1}^p \sum_{x \in G} |f(x) - f(x s_{i_j}^{m_j})|^2  \qquad \text{(telescoping sum and Cauchy-Schwarz)}\\
   &\leq C_1(\alpha_{i_j}) p \sum_{j = 1}^p C_0^{\alpha_{i_j}} r \e_{\mu_{i_j}} (f,f) \qquad \text{(by (\ref{eqn:orbitp}))}\\
   &\leq p^2 k \max_{1 \leq i \leq k} \left \{ C_1(\alpha_{i}) C_0^{\alpha_i} \right\} r \e_{\mus}(f,f) \qquad \text{(since $\e_{\mu_i} \leq k \e_{\mus}$)}  .
  \end{align*}
  Let $a = k p^2 \max_i \{C_1(\alpha_{i}) C_0^{\alpha_i} \}$.
  The theorem now follows immediately from Proposition \ref{prop:dirprops} (1).
\end{proof}

\begin{proof}[Proof of lower bound of Theorem \ref{thm:intro-gap}]
  Let $r = \dia$. In this case,
  $$f_r = \mathds{E}_\pi [f] \and ||f-f_r||_2^2 = \var_\pi(f).$$
  Using the setting of Theorem \ref{thm:nilpoincare},
  we have that for all $f \in \ell^2(\pi)$, $\var_\pi(f) \leq a \dia \e_{\mus}(f,f)$.
  From the spectral gap characterization (\ref{eqn:gap}), we obtain $1 - \beta_1 \geq 1/a \dia$.
\end{proof}

\subsection{Spectral gap upper bound}

In this section, we will prove the upper bound of Theorem \ref{thm:intro-gap} as a consequence of the following result, which is similar to \cite[Lemma 4.2]{CheKum2018}.

\begin{thm}
  Let $\mus$ be a long-jump measure on a finite group $G$ that is doubling with constant $A$ with respect to $\cost{\cdot}$.
  There exists $\zeta \in \ell^2 (\pi)$ and $a(A, \sp) > 0$ such that
  \[
    \frac{\e_{\mus} (\zeta, \zeta)}{||\zeta||_2^2} \leq \frac{a(A,\sp)}{\dia}.
  \]
  \label{thm:emuslower}
\end{thm}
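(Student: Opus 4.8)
The plan is to build the test function $\zeta$ by hand as a suitably truncated "bump'' supported on a ball, mimicking the construction in Lemma 4.2 of \cite{CheKum2018}. Fix a point $o$ realizing $\cost{o} = \dia$ as in Lemma \ref{lem:gamma}, and consider the ball $B = B(e, R)$ together with a concentric larger ball $B' = B(e, 2R)$. Since $R \leq \dia/16 \leq \gamma\dia$ (with room to spare), the doubling property (Theorem \ref{thm:intro-doubling}) guarantees $V(2R) \leq A\, V(R)$, so the annulus $B' \setminus B$ has comparable mass to $B$ itself, and, crucially, $B'$ is still "small'' in the sense that its complement is nonempty and in fact large — this is what will make $\var_\pi(\zeta)$ comparable to $\|\zeta\|_2^2$. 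I would take $\zeta(x) = \left(1 - \cost{x}/(2R)\right)_+$, i.e. $\zeta$ equals $1$ at $e$, decays linearly in the quasi-norm, and vanishes outside $B'$. (One may need to replace $\cost{\cdot}$ by a genuine metric comparable to it, or to absorb the quasi-triangle constant; this is a routine fix since $\cost{\cdot}$ is a quasi-norm with constant depending only on $\ell,k,\sp$.)

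The two quantities to control are the numerator $\e_{\mus}(\zeta,\zeta)$ and the denominator $\|\zeta\|_2^2$. For the denominator, $\zeta \geq 1/2$ on $B(e,R)$, so $\|\zeta\|_2^2 \geq \tfrac14 V(R)$. For the numerator, linearity of the Dirichlet form in $\mu$ reduces the estimate to bounding each $\e_{\mu_i}(\zeta,\zeta)$. Writing $\e_{\mu_i}(\zeta,\zeta) = \tfrac12\sum_{x}\sum_{j} |\zeta(x) - \zeta(x s_i^j)|^2 p_i(j)\pi(x)$, I split the inner sum over $j$ according to whether $|j|^{\alpha_i}$ is small or large compared to $R$. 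When $|j|^{\alpha_i} \lesssim R$, the Lipschitz-type property of $\zeta$ with respect to the quasi-norm — namely $|\zeta(x)-\zeta(xs_i^j)| \lesssim \cost{s_i^j}/R \lesssim |j|^{\alpha_i}/R$ — gives a contribution bounded by $\tfrac{1}{R^2}\sum_{|j|^{\alpha_i}\lesssim R}|j|^{2\alpha_i}p_i(j)$, and since $p_i(j) \asymp (1+|j|)^{-(1+\alpha_i)}$ this sum is $\asymp R^{2 - 1/1}$… more precisely it is $\asymp R^{1}\cdot R^{-1}$-type; carrying out the geometric/zeta-sum estimate it comes out $\lesssim 1/R$ times the mass of the relevant ball. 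When $|j|^{\alpha_i} \gtrsim R$, I bound $|\zeta(x)-\zeta(xs_i^j)|^2 \leq 1$ trivially and use Lemma \ref{lem:upperpi} to get $\sum_{|j|^{\alpha_i}\gtrsim R} p_i(j) \lesssim 1/R$; but this tail term must be weighted only by the $\pi$-mass of $x$ for which $\zeta(x)$ or $\zeta(xs_i^j)$ is nonzero, i.e. by $V(2R)$, which by doubling is $\asymp V(R)$. Assembling, $\e_{\mus}(\zeta,\zeta) \lesssim \tfrac{1}{R}V(R) \lesssim \tfrac{A'}{R}\|\zeta\|_2^2$ with $A'$ depending only on $A$ and $\sp$, which is the claim.

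The main obstacle is the bookkeeping in the "large-jump'' tail: one must make sure that a jump $s_i^j$ with $|j|$ large is only counted when it actually moves a point inside $\mathrm{supp}\,\zeta = B'$ to somewhere where $\zeta$ differs, so that the factor multiplying $\sum_{|j| \text{ large}} p_i(j)$ is the volume of a fixed ball ($\asymp V(R)$ by doubling) rather than all of $|G|$. Concretely, for fixed $x$ the set of $j$ with $\zeta(x)\neq \zeta(xs_i^j)$ forces either $\cost{x} \leq 2R$ or $\cost{xs_i^j}\leq 2R$; summing $\pi(x)$ over the first option gives $V(2R)$, and for the second option one reindexes $x \mapsto xs_i^j$ and again gets $V(2R)$, at the cost of the quasi-triangle constant which only inflates the ball radius by a bounded factor. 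Doubling then collapses everything back to $V(R)$. A secondary, purely technical point is handling the quasi-norm (as opposed to a norm) uniformly; since all the implicit constants in the quasi-triangle inequality, in doubling, and in the $\cost{\cdot}$-Lipschitz bound for $\zeta$ depend only on $\ell, k, \sp$ (and $A$ depends only on those too, by Theorem \ref{thm:intro-doubling}), the final constant $a(A,\sp)$ is legitimate. Everything else — the zeta-function sums $\sum |j|^{2\alpha_i}(1+|j|)^{-(1+\alpha_i)}\charf_{|j|\leq R^{1/\alpha_i}}$ — is an elementary comparison to an integral, exactly as in Lemma \ref{lem:upperpi}.
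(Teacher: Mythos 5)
The step that fails as stated is the pointwise bound $|\zeta(x)-\zeta(xs_i^j)|\le C\,\cost{s_i^j}/R\le C|j|^{\alpha_i}/R$. Because $\cost{\cdot}$ is only a quasi-norm, increments of the cost are not controlled by the cost of the increment once some $\alpha_i>1$: already for $G=\Z/N\Z$, $S=(1)$, $\alpha=3/2$, taking $|x|\asymp R^{2/3}$ and $1\le j\ll R^{2/3}$ gives $\cost{x+j}-\cost{x}\asymp R^{1/3}\,j$, which is much larger than $j^{3/2}$, so your bump $(1-\cost{x}/(2R))_+$ does not have the claimed Lipschitz property (your bound is fine when $\alpha_i\le 1$). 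The correct substitute --- and this is exactly the computation in the paper's proof --- is to use that on $\text{supp}\,\zeta$ an optimal word $w_x$ satisfies $d:=\deg_{\s_i}(w_x)\le (2R)^{1/\alpha_i}$, so that (assuming $\cost{xs_i^j}\ge\cost{x}$, as one may by symmetry of $p_i$) $|\zeta(x)-\zeta(xs_i^j)|\le \frac{1}{2R}\left((d+|j|)^{\alpha_i}-d^{\alpha_i}\right)$, which by the mean value theorem is at most $C(\alpha_i)\,R^{-1/\alpha_i}|j|$ for $\alpha_i\ge 1$; the paper streamlines this by working with $(R^{1/\ba}-\cost{g}^{1/\ba})_+$, $\ba=\min\sp$, so that one mean-value estimate with exponent $\alpha_i/\ba\ge 1$ covers all directions at once. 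With the corrected bound the small-jump contribution per point of the support is of order $R^{-2/\alpha_i}\sum_{|j|\le R^{1/\alpha_i}}|j|^2p_i(j)\asymp R^{-2/\alpha_i}R^{2/\alpha_i-1}=R^{-1}$, and your large-jump tail (Lemma \ref{lem:upperpi} plus the reindexing of $x$, with doubling collapsing $V(2R)$ to $AV(R)\le 4A\|\zeta\|_2^2$) is fine, so the conclusion $\e_{\mus}(\zeta,\zeta)\le a(A,\sp)\|\zeta\|_2^2/R$ survives --- but only after this step is redone.

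A secondary point: as literally stated the inequality is also satisfied by constant functions, so the real content, as used to prove the upper bound on $1-\beta_1$ in Theorem \ref{thm:intro-gap} via (\ref{eqn:gap}), is a test function that is mean zero or has $\var_\pi(\zeta)\asymp\|\zeta\|_2^2$. The paper achieves this by antisymmetrizing, $\zeta=\zeta_+-\zeta_-$ with the second bump centered at a point $o$ of maximal cost (mean zero by translation invariance), at the price of also bounding the cross term $\e_{\mus}(\zeta_+,\zeta_-)$ using Lemma \ref{lem:gamma} and Lemma \ref{lem:upperpi}. You assert the variance comparison for your single bump but do not prove it; it does hold: since $8R\le\dia/2$, the quasi-triangle inequality shows $B(e,2R)$ and $oB(e,2R)$ are disjoint, so $\pi(\text{supp}\,\zeta)\le 1/2$, and Cauchy--Schwarz gives $(\E_\pi\zeta)^2\le\tfrac12\|\zeta\|_2^2$, hence $\var_\pi(\zeta)\ge\tfrac12\|\zeta\|_2^2$. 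Either include such an argument or use the paper's two-bump test function.
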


\begin{proof}[Proof of the upper bound of Theorem \ref{thm:intro-gap}]
  Let $G$ be a finite nilpotent group with nilpotency class $\ell$, $S$ be a list of $k$ generating elements, and $\sp \in (0,2)^k$.
  This gives a long-jump measure $\mus$ and cost function $\cost{\cdot}$.
  By Theorem \ref{thm:intro-doubling}, $G$ is doubling with respect to $\cost{\cdot}$, with doubling constant $A(\ell,k,\sp)$.
  From Theorem \ref{thm:emuslower}, there exists a constant $a(\ell,k,\sp)>0$ and function $\zeta$ so that
  \[ \frac{\e_{\mus}(\zeta,\zeta)}{||\zeta||_2^2} \leq \frac{a(\ell,k,\sp)}{\dia}. \]
  From the spectral gap characterization (\ref{eqn:gap}), $1 - \beta_1 \leq a(\ell,k,\sp) / \dia$.
\end{proof}

We are left with the task of proving Theorem \ref{thm:emuslower}.

\begin{lem}
  Let $o \in G$ where $\cost{o} = \dia$.
  In addition, define
  \begin{align*}
    \Omega_+ = \left\{ x \in G: \cost x \leq \frac{1}{12} \dia \right\} \and
    \Omega_- = \left\{ x \in G: \cost {o^{-1} x} \leq \frac{1}{12} \dia \right\}.
  \end{align*}

  If $g \in \Omega_+$ and $gh \in \Omega_-$, then $\cost{h} \geq \frac{1}{8} \dia$. Thus $\Omega_+$ and $\Omega_-$ are disjoint.
  \label{lem:gamma}
\end{lem}

\begin{proof}
  We know that
  \begin{align*}
    \cost{o} \leq 2 ( \cost{o^{-1}g} + \cost {g} ) \and
    \cost{o^{-1}g} \leq 2 ( \cost{o^{-1}gh} + \cost{h} ) .
  \end{align*}
  It follows that
  \[ \dia = \cost{o} \leq 2 \left( 2 \left( \frac{1}{12} \dia + \cost{h} \right) + \frac{1}{12} \dia \right) = \frac 12 \dia + 4 \cost{h}. \]
  Thus, $\cost{h} \geq \dia / 8$.
\end{proof}

We now define the test function for Theorem \ref{thm:emuslower}.
For $R = \dia/16$, let $\zeta: G \to \R$ be
 $$\zeta (g) = \zeta_+ (g) - \zeta_-(g),$$ where
 $\ba = \min (\sp),$
  \begin{align}\label{eqn:defzetaplus}
    \zeta_+ (g) &= (R^{1/\ba} - \cost{g}^{1/\ba})_+ \and \zeta_- (g) = (R^{1/\ba} - \cost{o^{-1}g}^{1/\ba})_+.
  \end{align}
Because $R = \dia/16$, by Lemma \ref{lem:gamma}, the supports of $\zeta_+$ and $\zeta_-$ are disjoint and
$$||\zeta||_2^2 = ||\zeta_+||^2 + ||\zeta_-||^2
= 2 ||\zeta_+||^2 = 2 ||\zeta_+||^2.$$

  Let $A$ be the doubling constant of $G$ with respect to $\cost{\cdot}$ and $B(e,R)$ is a ball with respect to the quasi-metric $\cost{\cdot}$.
  Because $ R^{1/\ba} - \cost{g}^{1/\ba} \geq ( 1 - {2^{-1/\ba}}) R^{2/\ba}$ when $g \in B(e, R/2)$, it follows that
  \begin{align*}
    ||\zeta_+||_2^2 &= \frac{1}{|G|} \sum_{g \in G} ( R^{1/\ba} - \cost{g}^{1/\ba} )_+^2
    \geq \frac{1}{|G|} \sum_{g \in B(e, R/2)} \left( 1 - \frac{1}{2^{1/\ba}} \right)^2 R^{2/\ba} \\
    &= \frac{1}{|G|} \left( 1 - \frac{1}{2^{1/\ba}} \right)^2 R^{2/\ba} \# B(e,R/2)
    \geq \frac{1}{|G|} \left( 1 - \frac{1}{2^{1/\ba}} \right)^2 \frac{1}{A} R^{2/\ba} \# B(e,R).
  \end{align*}
  Thus,
  \begin{equation}
    ||\zeta||_2^2 \geq C_0 R^{2/\ba} \frac{\# B(e,R)}{|G|}, \qquad \text{where $C_0 = \frac 2{A} (1 - 2^{-1/\ba})^2$}.
    \label{eqn:l2testfct}
  \end{equation}

  Now Theorem \ref{thm:emuslower} follows from (\ref{eqn:l2testfct}) and the following lemma.
  \begin{lem}
    Let $\zeta$ be defined as above. Then there exists $C(k,\ell,\sp) > 0$ such that
  $$ \e_{\mus} (\zeta,\zeta) \leq \frac{C(k,\ell,\sp) \#B(e,R)}{|G|} R^{-1 + 2/\ba}.$$
  \label{lem:test1}
  \end{lem}

  Because
  $\e_{\mus} (\zeta, \zeta) = 2 \e_{\mus} (\zeta_+, \zeta_+) - 2 \e_{\mus} (\zeta_+, \zeta_-)$,
  Lemma \ref{lem:test1} reduces to the following statement.

  \begin{lem}
    Let $\zeta$ be defined as above. Then there exists $C(k,\ell,\sp) > 0$ such that
    \begin{equation} \label{eqn:zetapp}
      \e_{\mus} (\zeta_+,\zeta_+) \leq \frac{C(k,\ell,\sp) \#B(e,R)}{|G|} R^{-1 + 2/\ba}
    \end{equation}
    and
    \begin{equation} \label{eqn:zetapm}
      - \e_{\mus} (\zeta_+,\zeta_-) \leq \frac{C(k,\ell,\sp) \#B(e,R)}{|G|} R^{-1 + 2/\ba}.
    \end{equation}
  \label{lem:test2}
  \end{lem}

  \begin{proof}[Proof of (\ref{eqn:zetapp})]
    For this bound, we will use the notation from Definition \ref{defn:longjump} to describe $\mus$.
  Fix $i_0 \in [1, k]$, and let $s_0 = S(i_0)$, $\alpha_0 = \sp(i_0)$, $\mu_0 = \mu_{i_0}$ and $p_0 = p_{i_0}$.
  We will first prove the inequality from the theorem for each $i_0$ and then take the average of both sides for the final result.

  Keeping this notation in mind, we begin by giving an upper bound for
  \[ \e_{\mu_0} (\zeta_+, \zeta_+)  = \frac 1{2|G|} \sum_{g, h \in G} |\zeta_+ (gh) - \zeta_+ (g)|^2 \musl (h). \]
  Let $\Omega = \{ (g,h) \in G \times \langle s_0 \rangle: \zeta_+(gh) + \zeta_+(g) > 0 \}.$
  So we can restrict the sum above to just $\Omega$. For a fixed $h$, we have that
  \[
    \# \{ g \in G: (g,h) \in \Omega\} \leq 2 \# B(e, R).
  \]

  Note that $\musl(h)$ is only non-zero when $h \in \langle s_0 \rangle$, so we can write $h = s_0^t$.
  Thus, we can further break the sum into two parts: (1) when $ |t| > \rho$ and (2) when $|t| \leq \rho$, where $\rho = (12 R)^{1/\alpha_0}$.
  For the first sum, by Lemma \ref{lem:gamma}, we have
  \begin{align*}
     \sum_{(g,h):\Omega: |t| \geq \rho} |\zeta_+(gh) - \zeta_+(g)|^2 \musl(h) &\leq 2 (R^{1/\ba})^2 \# B(e, R) \sum_{|t| \geq \rho} p_0(t) \\
     &\leq \frac{4 R^{2/\ba} \#B(e, R)}{\alpha_0 \rho^{\alpha_0}}.
  \end{align*}

  For sum (2), fix $g \in G$ and $h \in \angle{s_0}$, and choose the smallest $t$ in absolute value so that $s_0^t=h$. It will be convenient, for all $g \in G$, to set $w_g$ to be a word that realizes the cost of $g$, and set $x = \deg_{\mathbf{s_0}} w_g$, the number of times either $\mathbf{s_0}$ or $\mathbf{s_0^{-1}}$ appears in $w_g$. To start, we would like to bound the term $|\zeta_+(gh) - \zeta_+(g)|$. Note that we can assume that $\cost{gh} \geq \cost{g}$; otherwise we can set $g_0 = gh$ and $g_0 h^{-1} = g$, and the bound would proceed the same since we do not make assumptions about $g$ and $\cost{h} = \cost{h^{-1}}$.
  This implies that
  \begin{align*}
    |\zeta_+(gh) - \zeta_+(g)| &= (R^{1/\ba} - \cost{g}^{1/\ba})_+ - (R^{1/\ba} - \cost{gh}^{1/\ba})_+.
  \end{align*}
  We will now show that this expression is less than or equal to $\cost{gh}^{1/\ba} - \cost{g}^{1/\ba}$.
  Since we assume that $\cost{gh} \geq \cost{g}$, if the first term is zero, then so is the second term. Therefore, three cases remain: (1) if both term are zero, then the inequality holds trivially, (2) if both terms are non-zero, then the two lines are equal, and (3) if the second term is zero, but the first is not, then, $\cost{gh} \geq R$, and
  \[
    |\zeta_+(gh) - \zeta_+(g)| = R^{1/\ba} - \cost{g}^{1/\ba} \leq \cost{gh}^{1/\ba} - \cost{g}^{1/\ba}.
  \]
  We are ready to evaluate
  \begin{align*}
    |\zeta_+(gh) - \zeta_+(g)| &\leq \max_{1 \leq i \leq k} \{ (\deg_{\s_i}w_g)^{\ai/\ba}, (x + |t|)^{\alpha_0/\ba} \} - \max_{1 \leq i \leq k} \{ (\deg_{\s_i} w_g)^{\ai/\ba}, x^{\alpha_0/\ba} \} \\
  &\leq (x + |t|)^{\alpha_0/\ba} - x^{\alpha_0/\ba}.
  \end{align*}
  By the fundamental theorem of calculus and $x \leq (12 R)^{1/\alpha_0}$, we have
  \[
    |\zeta_+(gh) - \zeta_+(g)| \leq \int_x^{x+|t|} \frac{\alpha_0}{\ba} s^{\frac {\alpha_0} \ba-1} \, ds
    \leq \frac{\alpha_0}{\ba} \left( \left(12 R \right) ^{1/\alpha_0} + \rho \right)^{\frac{\alpha_0}\ba-1} |t|
    \leq \frac{\alpha_0}\ba ( 2^{1/\alpha_0} 12 R)^{\frac1\ba - \frac {1} {\alpha_0}} |t|.
  \]
  Summing over $h = s^t$, where $|t| \leq \rho$,
  \begin{align*}
    \frac 12 \sum_{|t| \leq \rho} |\zeta_+(g s^t) - \zeta_+(g)|^2 \musl(t) &\leq \frac{\alpha_0^2}{\ba^2} \left( 2^{1/\alpha_0} 12  R \right)^{2/\ba-2/\alpha_0} \#B(e,R) \sum_{|t| \leq \rho} |t|^2 \musl(t) \\
    &\leq C_1 R^{2/\ba-2/\alpha_0} \#B(e,R) \rho^{2-\alpha_0} \qquad
    \text{(by Lemma \ref{lem:upperpi})},
  \end{align*}
  where $C_1 = C_1 (\alpha_0)= C_0(\alpha_0) \alpha_0^2 12^{2/\ba-2/\alpha_0} \frac{3^{2-\alpha_0}}{2-\alpha_0}$ and $C_0(\alpha_0)$ is the constant that appears in Lemma \ref{lem:upperpi}.
  Putting the two parts together, we have
  \begin{align*}
    |G| \e_{\mu_0} (\zeta_+,\zeta_+) & \leq \# B(e,R) R^{2/\ba}  \left( \frac{4}{\alpha_0}  \rho^{-\alpha_0} +  C_1 R^{-2/\alpha_0} \rho^{2-\alpha_0} \right) \\
   & \leq \# B(e,R) R^{2/\ba} \left(\frac{4}{\alpha_0 12^{\alpha_0}} R^{-1} + C_1 12^{2 - \alpha_0} R^{-2/\alpha_0} R^{2/ \alpha_0 - 1}  \right) \\
   & \leq C_2 \# B(e,R) R^{-1 + 2/\ba},
  \end{align*}
  where $C_2 = C_2 (\alpha_0) = \frac{4}{\alpha_0 12^{\alpha_0}} + C_1(\alpha_0) 12^{2 - \alpha_0}$.
  %Since this is true for all $s_0$ and $\alpha_0$,
  %\[ \e_{\mus} (\zeta_+, \zeta_+) \leq C_4 ||\zeta_+||_2^2 R^{-1}, \]
  %where $C_4 = \max_i C_3(\alpha_i)$.
  \end{proof}

  \begin{proof}[Proof of (\ref{eqn:zetapm})]
    We use the notation $s_0$, $\alpha_0$, $\mu_0$, and $p_0$ as above, and give a lower bound for
  \[
     \e_{\mu_0}(\zeta_+, \zeta_-) =  \frac 1 {2|G|}\sum_{g,h \in G} (\zeta_+(gh) - \zeta_+(g)) (\zeta_-(gh) - \zeta_-(g)) \musl(h).
  \]
  Let $\Omega_+$ be the support of $\zeta_+$ and $\Omega_-$ be the support of $\zeta_-$. As we chose $R = \dia /12$, Lemma \ref{lem:gamma} implies that $\Omega_+$ and $\Omega_-$ are disjoint. We see that the only non-zero summands are those where $g \in \Omega_+$ and $gh \in \Omega_0$ or vice versa, in which case $\cost{h} > R$.
  By symmetry, we have
  \begin{align*}
    - \e_{\mu_0}(\zeta_+, \zeta_-) &=  \frac{1}{|G|} \sum_{\substack{g \in \Omega_+ \\ gh \in \Omega     _-}} \zeta_+(g) \zeta_-(gh) \musl(h)
  \end{align*}
  Because $|\zeta_+|, |\zeta_+| \leq R^{1/\ba}$ and $\zeta_+$ has support in $B(e,R)$,
  \begin{align*}
    - |G| \e_{\mu_0}(\zeta_+, \zeta_-) &
    \leq R^{2/\ba} \sum_{\substack{g \in \Omega_+ \\ gh \in \Omega_-}} \musl(h)
    \leq R^{2/\ba} \# B (e, R) \sum_{\cost{h} > R} \musl(h) \\
    &\leq R^{2/\ba} \# B (e, R) \sum_{|t|^{\alpha_0} > R} p_0(t)
    \leq C_3(\alpha_0) \# B (e, R) R^{-1 + 2/\ba},
  \end{align*}
  where $C_3(\alpha_0)$ is as in Lemma \ref{lem:upperpi}.
  Averaging over all components $\mu_i$ of $\mus$, we get that the inequality also holds for $\mus$.
  \end{proof}

\section{Volume estimates}
\label{sect:vol}

For proving the doubling statement of Theorem \ref{thm:intro-doubling}, there are two main ingredients: (1) \cite[Example 1.5]{SalZhe2015:1047} that shows doubling with respect to $\cost{\cdot}$ for free nilpotent groups
and (2) the finite version of \cite[Lemma 1.1]{Gui1973:379} stated below, which allows us to translate doubling from the free nilpotent group to the finite nilpotent group.

\begin{lem} [{\cite[Lemma 1.1]{Gui1973:379}}]
  Let $G$ be a finitely-generated countable group acting on a set $X$, which we will write on the right.
  Let $A$ and $B$ be finite subsets of $G$, and $Y$ a subset of $X$. Then,
  \[ \# A  \#(Y B) \leq \#(AB) \#(Y A^{-1}) . \]
\label{lem:volproj}
\end{lem}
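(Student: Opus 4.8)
The plan is to deduce the inequality from an explicit injection
\[ \psi : A \times YB \longrightarrow AB \times YA^{-1}, \]
since then comparing cardinalities gives $|A|\,|YB| \le |AB|\,|YA^{-1}|$ at once. The construction hinges on fixing, in advance, one ``name'' for each point of $YB$.

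First I would choose, for every $z \in YB$, a pair $(y_z, b_z) \in Y \times B$ with $z = y_z \cdot b_z$; such a pair exists by the definition of $YB$, and I simply pick one for each $z$ (no finiteness of $Y$ is needed for this, and $A$, $B$ are finite by hypothesis). I would then set $\psi(a, z) = (a b_z,\ y_z \cdot a^{-1})$. The first coordinate lies in $AB$ because $a \in A$ and $b_z \in B$, and the second lies in $YA^{-1}$ because $y_z \in Y$ and $a \in A$, so $\psi$ does map into $AB \times YA^{-1}$.

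The only step requiring care is checking that $\psi$ is injective. Suppose $\psi(a,z) = \psi(a',z')$, so that $a b_z = a' b_{z'}$ as elements of $G$ and $y_z \cdot a^{-1} = y_{z'} \cdot a'^{-1}$ as points of $X$. The first equality gives $a^{-1} a' = b_z b_{z'}^{-1}$; acting on the right by $a$ in the second equality and substituting yields $y_z = y_{z'} \cdot (a'^{-1} a) = y_{z'} \cdot b_{z'} b_z^{-1}$, and hence
\[ z = y_z \cdot b_z = y_{z'} \cdot b_{z'} b_z^{-1} b_z = y_{z'} \cdot b_{z'} = z'. \]
Once $z = z'$, the fact that $(y_z, b_z)$ was fixed as a function of $z$ alone forces $y_z = y_{z'}$ and $b_z = b_{z'}$, after which $a b_z = a' b_z$ gives $a = a'$. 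Thus $\psi$ is injective.

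I do not expect a genuine obstacle: the argument is elementary. The one point to watch is the bookkeeping with the right action — keeping straight which equalities live in $G$ and which in $X$ — together with the observation that the representatives $y_z, b_z$ must be chosen once and for all, since an adaptive choice would break the injectivity step. With that in place the lemma follows immediately; it will be applied in Section~\ref{sect:vol} with $G$ the free nilpotent group and $X = G$ acted on by right translation, to transport the doubling property from the free nilpotent group to its finite quotients.
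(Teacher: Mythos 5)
Your argument is correct: the representatives $(y_z,b_z)$ are fixed once and for all as a function of $z$, the map $\psi(a,z)=(ab_z,\,y_z\cdot a^{-1})$ lands in $AB\times YA^{-1}$, and the injectivity check (recovering $z=z'$ from the two coordinate equalities and then cancelling to get $a=a'$) is carried out accurately, which gives $|A|\,|YB|\le |AB|\,|YA^{-1}|$. The paper itself does not prove this lemma but simply quotes it from Guivarc'h, and your injection is essentially the standard counting proof of that result, so there is nothing to reconcile; the only cosmetic remark is that when $Y$ is infinite the inequality should be read as a cardinal inequality (in the paper's application $Y=\{e_G\}$, so this is moot).
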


\begin{proof}[Proof of Theorem \ref{thm:intro-doubling}]
  Let $\hat G = N(\ell, k)$ denote the free nilpotent group of class $\ell$ generated by $S$ of class $\ell$.
  Let $\W$ be the set words generated by entries of $S$ and $\hat \rho$ and $\rho$ be the natural projection maps from $\hat G$ and $G$, respectively, to $\W$.
  Using this lifting, we can define an $(S, \sp)$-cost function on $\hat G$, which we will call $\ncost{\cdot}$ to differentiate.
  We will also use $\hat B$ and $B$ to denote balls with respect to $\ncost{\cdot}$ and $\cost{\cdot}$ respectively.
  Further, there exists $\phi: \W \to \hat G$ so that the following diagram commutes.
  \begin{center}
    \begin{tikzcd}
      & & \hat G = N(\ell,k) \arrow[d,"\phi"]\\
      S \arrow[hook, r, "i"] & \W \arrow[ru,"\hat \rho"] \arrow[r,"\rho"] & G
    \end{tikzcd}
  \end{center}
  With this, by the way that the cost function is defined, for all $\hat g \in N(\ell,k)$, $\ncost{\hat g} \geq \cost{\phi(\hat g)}$.

  By Example 1.5 from \cite{SalZhe2015:1047}, there exist constants $c_1, c_2 > 0$
  \begin{equation}
      c_1 r^\dell \leq \# \hat B (e,r) \leq c_2 r^\dell \quad \text{where } \dell = \sum_{m=1}^\ell \sum_{d|m} \boldsymbol{\mu}(d) k^{m/d}
  \end{equation}
  and $\boldsymbol \mu$ is the classical M\"obius function. Thus, $N(\ell,k)$ has polynomial growth with respect to $\ncost{\cdot}$.

  Next, we apply Lemma \ref{lem:volproj} with the group action of $\hat g \in \hat G$ on $x \in G$ via $ x \cdot \hat g = x \phi(\hat g)$, and the sets
  \begin{align*}
    Y &= \{ e_G\}, \\
    A &= \hat B (e, r) = \{ x \in N(\ell,k) : \ncost{x} \leq r\} \text{ and}\\
    B &= \hat B (e, 2r) = \{ x \in N(\ell,k) : \ncost{x} \leq 2r\}.
  \end{align*}
  First, notice that since $\cost{x}=\cost{x^{-1}}$, $YA^{-1} = YA$.
  We then show $YA^{-1} = YA = B(e, r)$ by showing inclusion both ways.
  Let $x \in B(e,r)$ and $w \in \W$ that realizes the cost of $x$, i.e., $\cost{x} = \max_i (\deg_{\s_i} w)^{\ai}$.
  Consider $\hat x = \hat \rho(w) \in N(k,\ell)$.
  By construction, $\varphi ( \hat x) = x$, $\ncost{\hat x} \leq r$, and thus $\hat x \in \hat B (e_G,r)$.
  It follows that $Y A \ni e_G \cdot \hat x =  e_G \phi(\hat x) = x,$
  and $B(e_G, r) \subseteq AY$.

  Now let $y \cdot a \in YA$. Let $w$ be a word that realizes cost of $a$, i.e., $\ncost{a} = \max_i (\deg_{\s_i} w)^{\ai}$.
  As in the previous case $\rho(w)$, which is also equal to $\phi(a) = y \cdot a$, must have cost less than or equal to $r$. Therefore, $YA \subseteq B (e_G,r)$.

  Next, we want to show that $AB \subseteq \hat B (e, 6r)$.
  Let $a \in A$ and $b \in B$.
  Let $w_0$, $w_1$, and $w_2$ be words that realizes the costs of $ab$, $a$ and $b$ respectively.
  By the triangle inequality for our quasi-norm, $\cost{ab} \leq 2 (\cost a + \cost b)$.
  So $A \subseteq \hat B (e, 6r)$.

  Finally, we can show doubling
  \begin{align*}
      \frac{\#B(e, 2r)}{\#B(e,r)} &= \frac{\# Y B }{\# Y A^{-1}}
                                      \leq \frac{\# AB}{\# A } \qquad \text{(by Lemma \ref{lem:volproj})} \\
                                      &\leq \frac{\# \hat B (e, 6r)}{\# \hat B (e,r)}
                                      \leq \frac{c_2 (6r)^\dell}{ c_1 r^\dell}
                                      \leq 6^\dell (c_2 / c_1). \qedhere
  \end{align*}
\end{proof}

Doubling imples the following property, which we use in Section \ref{sect:mixing}.
\begin{cor}
  For all $0 \leq r \leq R \leq \dia$,
  \[ V(e, r) \geq  A^{-2} V(e, R) \left( \frac{r+1}{R+1} \right)^d, \qquad \text{where } d = \log_2 A.  \]
\end{cor}

\begin{proof}
  We have that for $\frac{R+1}{2^k} \leq r+1$,
  \[ V(e,R) \leq V(e,R+1) \leq 2 V(e, \frac{R+1}{2}) \leq \cdots \leq A^k V(e, \frac{R+1}{2^k}) \leq A^k V(e, r+1) \leq A^{k+1} V(e, r),  \]
   After taking $\log_2 (\frac{R+1}{r+1}) \leq k \leq \log_2 (\frac{R+1}{r+1}) + 1$, we can deduce
  \[
    \frac{V(e,R)}{V(e,r)} \leq A^{\log_2 (\frac{R+1}{r+1}) + 2} = A^2 \left( \frac{R+1}{r+1} \right)^d.
    \qedhere
    \]
\end{proof}

To conclude this section, we show that $\cost{\cdot}$ also satisfies a ``reverse doubling'' property, i.e. a lower bound of $V(e,R)/V(e,r)$ by a quantity that is a polynomial of $R/r$.

\begin{lem}
  Let $G$ be a finite group, $\cost{\cdot}$ be the cost function of an $(S,\sp)$-long-jump random walk.
  Let $1 \leq R < \dia$, there exists $g \in G$ such that $R/4 \leq \cost{g} \leq R$.
  \label{lem:rings}
\end{lem}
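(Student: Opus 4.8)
The plan is to produce such a $g$ directly, by tracking how $\cost{\cdot}$ evolves along a word joining $e_G$ to a point of maximal cost and invoking the quasi-triangle inequality. Concretely, I would first fix $o\in G$ with $\cost{o}=\dia$ and any word $w_0\in\W$ with $\rho(w_0)=o$; reading $w_0=\ell_1\cdots\ell_m$ one letter at a time and setting $g_j=\rho(\ell_1\cdots\ell_j)$ gives a chain $g_0=e_G,\ g_1,\dots,g_m=o$ in which $g_{j+1}=g_j s_{i_j}^{\pm1}$ for a single generator at each step (because $\rho$ is a homomorphism).

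The heart of the argument is the bound $\cost{g_{j+1}}\le 2\cost{g_j}+2$ for every $j$. This follows from the quasi-triangle inequality with constant $2$ — exactly the inequality already used in Lemma \ref{lem:gamma} and justified in the proof of Theorem \ref{thm:intro-doubling} from $\alpha_i\le 2$ — together with the elementary fact that $\cost{s_i^{\pm1}}\le 1$ (the one-letter word $\s_i^{\pm1}$ witnesses this). Since $\cost{g_0}=0$ and $\cost{g_m}=\dia>R$, I would then let $j^\star$ be the largest index with $\cost{g_{j^\star}}<R/4$; by maximality $\cost{g_{j^\star+1}}\ge R/4$, while the growth bound gives $\cost{g_{j^\star+1}}\le 2\cost{g_{j^\star}}+2<R/2+2$. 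As soon as $R\ge 4$ this last quantity is $\le R$, so $g:=g_{j^\star+1}$ lands in $[R/4,R]$, which is what we want.

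The range $1\le R<4$ is not covered by this (because of the additive $+2$), but it is handled in one line: since $\dia>R\ge1$ the group $G$ is non-trivial, so $S$ contains some $s_i\ne e_G$, and then $\cost{s_i}=1$ — the word $\s_i$ gives $\cost{s_i}\le1$, and any word representing a non-identity element is non-empty, so some $\deg_{\s_j}$ is at least $1$ and the cost is at least $1$. Because $R/4<1\le R$ in this range, $g:=s_i$ does the job.

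I do not expect a genuine obstacle here: everything reduces to the cost-growth estimate above, which is immediate from tools already in the paper. The only mild nuisance is precisely that additive constant in $\cost{g_{j+1}}\le 2\cost{g_j}+2$, which forces the trivial small-$R$ case to be split off rather than absorbed into the walking argument. I would also note that the doubling constant $A$ appearing in the statement plays no role in this proof; it is recorded only because this $g$ is the input to the reverse-doubling estimate that follows.
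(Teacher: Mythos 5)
Your proof is correct, and it uses exactly the same ingredients as the paper's own argument --- the quasi-triangle inequality $\cost{gh}\le 2(\cost{g}+\cost{h})$, the bound $\cost{s_i}\le 1$, a single-generator boundary-crossing step, and the same separate treatment of $1\le R\le 4$. The only (cosmetic) difference is the direction of the crossing: the paper takes $g\in B(e,R)$ with $gs\notin B(e,R)$ and bounds $\cost{g}$ from below, while you cross the threshold $R/4$ along an explicit word for a maximal-cost element and bound the next prefix from above; this is essentially the paper's proof.
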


\begin{proof}
  In the case that $1 \leq R \leq 4$, fix $s$ in $S$ that is not the identity. Then, $s \in B(e,R)$, and  $R/4 \leq \cost{s} \leq R$.
  Now consider $R$ such that $4 \leq R < \dia$. Since $R$ is strictly smaller than $\dia$, $G \setminus B(e,R)$ is non-empty and there exists $g \in B(e,R)$ such that $gs \in G \setminus B(e,R)$ where $s$ is an entry in $S$.
  Therefore, $R < \cost{gs} \leq 2( \cost{g} + \cost{s}) = 2 (\cost g + 1)$, and $\cost g > R/4$.
\end{proof}

\begin{prop}
    For all $1 \leq R \leq \dia$ and $r = 2^{-7} R$,
     \[ \frac{V(e,R)}{V(e,r)} \geq 2.\]
    Consequently, for all $1 \leq r \leq R \leq \dia$,
    \begin{equation} \frac{V(e,R)}{V(e,r)} \geq \frac 12 \left( R/r \right)^{1/7}.
      \label{eqn:reversedouble}
    \end{equation}
\end{prop}

\begin{proof}
  There are no elements with cost in $(0,1)$. So if $1 \leq R \leq 4$, $\# B(e,r) = 1$ and $\# B(e,R) \geq 2$.
  Now, we assume that $4 \leq R \leq \dia$. By Lemma \ref{lem:rings}, there exists $o \in B(e, R/4)$, such that $2^3 r =  R/2^4 \leq \cost o \leq R/2^2$.
  We will show that
  \begin{enumerate}
    \item $B(e,r) \cap B(o, r) = \emptyset$
    \item $B(e,r) \cup B(o, r) \subseteq B(e,R)$.
  \end{enumerate}
  This immediately implies that there are two disjoint balls of radius $r$ in $B(e,R)$, which is our desired result.
  To show (1), suppose there exists $g \in B(o,r) \cap B(e,r)$.
  By definition, we know that $\cost g \leq r$ and $\cost{o^{-1} g} \leq r$.
  This implies that $2^3 r \leq \cost o \leq 2(\cost{o^{-1} g} + \cost g) \leq 2^2 r$, which is a contradition.
  For (2), the fact that $B(e,r) \subseteq B(e,R)$ is clear. If $g \in B(o, r)$, then $\cost g \leq 2 (\cost o + \cost{o^{-1}g}) \leq 2 (R/4 + R/2^7) \leq R$.

  To show (\ref{eqn:reversedouble}), observe that for $ R \geq 2^{7k}r$,
  \[
  V(e,R) \geq 2 V(e,  2^{-7} R) \geq \cdots \geq 2^k V(e, 2^{-7k} R) \geq 2^k V(e,r).
  \]
  By choosing $k$ so that $ \frac 17 \log_2 (R/r) - 1  \leq k \leq \frac 17 \log_2 (R/r)$, we get the desired result.
\end{proof}

\section{Estimates on mixing and proof of Theorem \ref{thm:intro-mixing}}
\label{sect:mixing}

\begin{thm}
  \label{thm:l2mixing}
  Let $K$ be the Markov kernel of an $(S, \sp)$-long-jump random walk on a finite group $G$ with nilpotency $\ell$, and $\pi$ be the uniform distribution.
  There exists $b_1, b_2, c_1, c_2 > 0$ such that for all $n > 0$
  \[ \frac{c_1}{V(e,n)^{1/2}} \exp\left( - n/ b_1 \dia \right) \leq ||k_e^n - 1||_2  \leq \frac{c_2}{V(e,n)^{1/2}} \exp\left( - n/ b_2 \dia \right).  \]
\end{thm}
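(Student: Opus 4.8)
The strategy is to prove the upper and lower bounds separately, in each case splitting the range of $n$ at $n\asymp\dia$ and gluing. Throughout I write $u(n)=\|k_e^n\|_2^2=k_e^{2n}(e)$, so that $\|k_e^n-1\|_2^2=u(n)-1$, and I use that $u$ is non-increasing and that $I+K\ge 2\alpha_*I$ (with $\alpha_*>0$) by \eqref{eqn:betamin}; also, when $\dia\le C(\ell,k,\sp)$ the group $G$ has bounded order by Theorem \ref{thm:rewritingwords}, so the bound is immediate and I may assume $\dia$ is as large as needed. For the upper bound I would first combine the pseudo-Poincar\'e inequality of Theorem \ref{thm:nilpoincare} with the translation invariance $V(x,r)=V(e,r)$ to obtain the one-parameter family of Nash-type inequalities $\|f\|_2^2\le 2ar\,\e_{\mus}(f,f)+2\|f\|_1^2/V(e,r)$, valid for all $r>0$ and $f\in\ell^2(\pi)$ (using $\|f_r\|_2^2\le\|f_r\|_\infty\|f_r\|_1\le\|f\|_1^2/V(e,r)$). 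Applying this to $f=k_e^n$ (so $\|f\|_1=1$) and using $u(n)-u(n+1)=\langle(I-K^2)k_e^n,k_e^n\rangle_\pi\ge 2\alpha_*\,\e_{\mus}(k_e^n,k_e^n)$ gives the discrete differential inequality $u(n)-u(n+1)\ge\frac{\alpha_*}{ar}\bigl(u(n)-2/V(e,r)\bigr)$; optimizing over $r$ (take $V(e,r)\asymp1/u(n)$, which keeps $r\le\dia$ while $u(n)$ exceeds a fixed constant) and invoking volume doubling (Theorem \ref{thm:intro-doubling}) and the reverse doubling of Section \ref{sect:vol} — so that $V^{-1}$ behaves polynomially — the standard Nash iteration (as in \cite{DiaSal1994:1,DiaSal1996:459,CouGri1997:133}) yields $u(n)\le C/V(e,n)$ for $1\le n\le\dia$, in particular $u(\dia)\le C$. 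For $n\ge\dia$ I would instead use \eqref{eqn:betamin} and the lower bound of Theorem \ref{thm:intro-gap}: for $\dia$ large the largest non-principal eigenvalue in modulus is $\beta_1\le 1-c_1/\dia$, so $u(n)\le e^{-2c_1(n-\dia)/\dia}u(\dia)\le Ce^{2c_1}e^{-2c_1n/\dia}$, which (since $V(e,n)=1$ here) is the claimed bound. Passing to $\|k_e^n-1\|_2^2=u(n)-1$ and absorbing bounded factors finishes the upper bound.

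For the lower bound I would first extract the exponential factor for \emph{all} $n$ from the ball test function. Let $\zeta=\zeta_+-\zeta_-$ be the function of Theorem \ref{thm:emuslower} with $R=\dia/16$; it has mean zero (by symmetry) and $\e_{\mus}(\zeta,\zeta)\le(a'/\dia)\|\zeta\|_2^2$. Since $|K^n\zeta(x)|=|\langle k_x^n-1,\zeta\rangle_\pi|\le\|k_x^n-1\|_2\|\zeta\|_2=\|k_e^n-1\|_2\|\zeta\|_2$ and $\|K^n\zeta\|_2\le\|K^n\zeta\|_\infty$, one has $\|k_e^n-1\|_2\ge\|K^n\zeta\|_2/\|\zeta\|_2$. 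Writing $\zeta=\zeta^{(+)}+\zeta^{(-)}$ for its spectral projections onto the non-negative and negative eigenspaces, \eqref{eqn:betamin} forces $\|\zeta^{(-)}\|_2^2\le\frac1{2\alpha_*}\e_{\mus}(\zeta,\zeta)$, hence $\|\zeta^{(+)}\|_2^2\ge\frac34\|\zeta\|_2^2$ once $\dia$ is large; Jensen's inequality applied to $t\mapsto t^n$ on $[0,1]$ then gives
\[ \frac{\|K^n\zeta^{(+)}\|_2^2}{\|\zeta^{(+)}\|_2^2}\ \ge\ \Bigl(1-\tfrac{2\,\e_{\mus}(\zeta,\zeta)}{\|\zeta^{(+)}\|_2^2}\Bigr)^{n}\ \ge\ e^{-cn/\dia}, \]
so $\|k_e^n-1\|_2\ge c'e^{-cn/2\dia}$ for all $n$. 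This is already the asserted lower bound when $n\ge\dia$, and it supplies the needed constant lower bound throughout $\dia/C''\le n\le\dia$.

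It remains to produce the volume factor for $1\le n\le\dia/C''$, which I would do by localization. By Cauchy--Schwarz, $\mathbb{P}(X_{2n}=e)=\sum_y\mathbb{P}(X_n=y)^2\ge\mathbb{P}(X_n\in B(e,r))^2/\#B(e,r)$, so $u(n)=k_e^{2n}(e)\ge\mathbb{P}(X_n\in B(e,r))^2/V(e,r)$. Thus the whole lower bound reduces to the \emph{spread estimate}: there is $C=C(\ell,k,\sp)$ with $\mathbb{P}\bigl(\cost{X_n}\le Cn\bigr)\ge\tfrac12$ for all $n$. Granting this, take $r=Cn$; for $n$ small enough that $V(e,Cn)\le\tfrac18$ — which by the reverse doubling of Section \ref{sect:vol} holds for $n\le n_1\asymp\dia$, hence throughout $1\le n\le\dia/C''$ after adjusting $C''$ — one gets $u(n)-1\ge\tfrac14/V(e,Cn)-1\ge\tfrac18/V(e,Cn)$, and comparing $V(e,Cn)$ with $V(e,n)$ by doubling yields $\|k_e^n-1\|_2\ge c/V(e,n)^{1/2}$. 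Combining this with the exponential bound and reconciling the constants $c_1,c_2,a_1,a_2$ completes the proof.

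The main obstacle is the spread estimate $\mathbb{P}(\cost{X_n}\le Cn)\ge\tfrac12$. The bound $\cost{X_n}\le\max_i(\deg_{\s_i}W_n)^{\alpha_i}$ read off the random word $W_n$ of the walk is too lossy when some $\alpha_i>1$, since then $\deg_{\s_i}W_n\asymp n$ and the bound is only of order $n^{\max_i\alpha_i}$; one must exploit cancellation inside $G$. I would lift $W_n$ to the free nilpotent cover $\hat G=N(\ell,k)$, collect $\hat\rho(W_n)$ into normal form $\prod_\gamma c_\gamma^{\,e_\gamma(W_n)}$ over the finitely many basic commutators $c_\gamma$ of weight $\le\ell$, bound $\cost{X_n}\le\cost{\hat\rho(W_n)}\lesssim_{\ell,k}\max_\gamma\cost{c_\gamma^{\,e_\gamma(W_n)}}$ by the quasi-triangle inequality, and use the growth estimates of \cite{SalZhe2015:1047}, by which a weight-$w$ basic commutator in generators $s_{i_1},\dots$ satisfies $\cost{c_\gamma^{\,e}}\asymp\max_j|e|^{\alpha_{i_j}/w}$. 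It then remains to show that each collection exponent $e_\gamma(W_n)$ — a degree-$w$ polynomial in the partial signed sums of the jumps — obeys $|e_\gamma(W_n)|\le C\,n^{\,w/\min_j\alpha_{i_j}}$ with probability close to $1$; this reduces to one-big-jump tail bounds for sums of the increments $J\sim\mu_i$, together with the fact that once the $i$-th coordinate has been used $\gtrsim N_i^{\alpha_i}$ times it has equilibrated on $\langle s_i\rangle$ (so $\cost{s_i^{T_i}}\lesssim N_i^{\alpha_i}\lesssim n$ in that regime). These estimates yield constants depending only on $\ell,k,\sp$ and, crucially, with no factor of $\log|G|$.
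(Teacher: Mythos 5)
Your upper bound (Nash-type inequalities derived from the pseudo-Poincar\'e inequality of Theorem \ref{thm:nilpoincare} plus doubling, combined with the spectral gap and the $\beta_{min}$ bound (\ref{eqn:betamin}) for $n\gtrsim\dia$) is essentially the paper's route, except that you run the Nash iteration by hand where the paper invokes the intermediate Nash machinery of \cite{DiaSal1996:459} at the single scale $R=n_1=\min(\floor{\dia},n)$; that part is fine. The purely exponential lower bound via the mean-zero test function $\zeta$ and its spectral decomposition is also sound and parallels the paper's bound $||k_e^n-1||_2\ge \beta_1^n$ type argument.

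The genuine gap is in the volume factor of the lower bound for $1\le n\lesssim \dia$. Your Cauchy--Schwarz reduction $k_e^{2n}(e)\ge \mathbb{P}(X_n\in B(e,r))^2/V(e,r)$ is valid, but it shifts the entire burden onto the spread estimate $\mathbb{P}(\cost{X_n}\le Cn)\ge 1/2$, which you do not prove, and your sketch of it does not hold up. First, the asserted asymptotics $\cost{c_\gamma^{\,e}}\asymp\max_j|e|^{\alpha_{i_j}/w}$ for a weight-$w$ basic commutator is false when the $\alpha_{i_j}$ differ: the paper's own Example \ref{ex:h3-simple} shows that in the Heisenberg group the central direction has cost exponent $\aI\aII/(\aI+\aII)$, not $\max\{\aI,\aII\}/2$; the optimal word is unbalanced. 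Second, and consequently, the target bound $|e_\gamma(W_n)|\le C\,n^{w/\min_j\alpha_{i_j}}$ is too weak: since the typical size of a weight-$w$ collection exponent is of order $n^{\sum_j 1/\alpha_{i_j}}$, what you must prove to force $\cost{X_n}\lesssim n$ is precisely $|e_\gamma(W_n)|\lesssim n^{\sum_j 1/\alpha_{i_j}}$ with high probability, a strictly smaller threshold than the one you state whenever the $\alpha_{i_j}$ are not all equal. Third, the joint control of all collection exponents with constants depending only on $\ell,k,\sp$, together with the "equilibration on $\langle s_i\rangle$" step needed on finite cycles, is a substantial piece of work comparable in difficulty to the theorem itself. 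The paper circumvents all of this deterministically: it writes $||k_e^n||_2=||K^n||_{1\to2}\ge \beta_U^{\,n-1}V(e,n)^{-1/2}$ with $U=B(e,n)$, where $\beta_U$ is the top eigenvalue of the kernel killed outside $U$, and bounds $\beta_U\ge 1-\min\{\alpha_*/8,\,a/n\}$ by reusing the test function $\zeta_+$ of Theorem \ref{thm:emuslower} with $R=n$ (an argument in the spirit of \cite{CouGri1997:133,SalZhe2016:4133}), after which the two cases on the size of $V(e,n)$ give the stated bound. To complete your proof you would need either to prove the spread estimate in full (with the corrected exponents) or to replace that step by this ball-eigenvalue argument.
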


\begin{proof}[Proof of the upper bound of Theorem \ref{thm:l2mixing}]
   We have shown that there exists positive real numbers $d = d (\ell, k, \sp) \geq 1$ and $a = a(\ell, k, \sp) \geq 1$, for all $0 \leq r \leq R \leq \dia$ and $f \in \ell^2(\pi)$,
  \begin{equation}
      V(e, r) \geq  A^{-2} V(e,R) \left( \frac{r + 1}{R+1} \right)^d   \and || f - f_r ||_2^2 \leq a r \e_{\mus}(f,f).
     \label{eq:modpoin}
  \end{equation}
It is straighforward to check that the proof of \cite[Theorem 5.2]{DiaSal1996:459} works for quasi-norms.
Using  \cite[Remark 5.4 (2)]{DiaSal1996:459} with $\alpha=1$ and $M = \frac{A^2(R+1)^d}{V(e,R)}$, (\ref{eq:modpoin}) implies that
  \[
    \forall f \in \ell^2(\pi), \qquad
    ||f||_2 ^{2 + 2/d} \leq C \left( \e_{\mus}(f,f) + \frac{1}{a R^2} ||f||_2^2 \right) ||f||_1^{2/d},
  \]
  where $C = (1+1/(2d))^2 (1+2d)^{1/d} A^{2/d} (R+1) (V(e,R))^{-1/d} a$.
  This is called a \emph{Nash inequality} \cite{DiaSal1996:459}.
  By \cite[Corollary 3.1]{DiaSal1996:459} with $R = n$,
  we obtain that,
  \begin{equation}
      \forall n \leq \dia, \qquad ||K^{n}||_{2 \to \infty} \leq \frac{c_3}{V(e,n)^{1/2}},
      \label{eqn:2toinfty}
  \end{equation}
  where $c_3 = 2 \sqrt{2} (2^{d/2}) (1+\ceil{2d})^{2d} (1+1/(2d))^{d} (1+2d)^{1/2} a^{d/2} A$.

  Now, fix $n > 0$ and write $n=n_1+n_2$ with $n_1 = \min ( \floor{\dia}, n)$.
  We have
  \[
    ||k_e^n - 1 ||_2 = || K^n - 1 ||_{2 \to \infty} \leq ||K^{n_1}||_{2 \to \infty} || K^{n_2} - \pi ||_{2 \to 2},
  \]
  e.g. \cite[Section 1.2.4]{Saloff-Coste1997}.
  Inequality (\ref{eqn:2toinfty}) implies that $||K^{n_1}||_{2 \to \infty} \leq  \frac{c_3}{V(e,n)^{1/2}}, $
  and Theorem \ref{thm:intro-gap} and (\ref{eqn:betamin}) give
  \begin{align*}
    || K^{n_2} - \pi||_{2 \to 2} &\leq (1- \min(2\alpha_*,1/a\dia))^{n_2}
    \qquad (\text{where } \alpha_* = \min_{\alpha \in \sp} \frac{\alpha}{2(1+\alpha)} )\\
    &\leq (1 - 2 \alpha_* / a\dia) ^{n_2}  \qquad (\text{since } 2 \alpha_* \leq  1 \text{ and } a \dia \geq 1 )\\
    &\leq \exp (-2 \alpha_* n_2 / a \dia).
  \end{align*}
  It follows that
  \begin{align*}
    ||k_e^n - 1||_2 &\leq ||K^{n_1}||_{2 \to \infty}|| K^{n_2} - \pi||_{2 \to 2} \leq \frac{c_3}{V(e,n_1)^{1/2}} \exp (-2 \alpha_* n_2 / a \dia) \\
    &\leq \frac{c_2}{V(e,n_1)^{1/2}} \exp (- n / b_2 \dia)
    \qquad (\text{where } c_2 = e^{1/a} c_3 \text{ and } b_2 = a/ (2\alpha_*) ).
    \qedhere
  \end{align*}
\end{proof}

    For the lower bound, we will use this following lemma which orginates from  \cite[Proposition 2.3]{CouGri1997:133} and  \cite[Lemma 3.1]{SalZhe2016:4133}.
\begin{lem}
  For all $n \geq 0$, there exists a constant $C \geq 0$ such that
  \[ ||k_e^n||_2 \geq \frac{e^{-2C}}{V(e,n)^{1/2}}. \]
  \label{lem:Veigen}
\end{lem}

\begin{proof}
    Let $U = B(e,n)$ and $K_U(x,y) = K(x,y)$ when $x$ or $y$ are in $U$ and $K_U(x,y) =0 $ otherwise. Let $\beta_U$ is the largest eigenvalue of $K_U$ and $\phi_U$ be its associated eigenvector.
    The argument is based on the fact that as a consequence of Cauchy-Schwarz inequality, the function $n \mapsto \frac{||K^nf||_2}{||K^{(n-1)}f||_2}$ is decreasing.
    We have
    \begin{align*}
      ||k_e^n||_2 &= ||K^n||_{1 \to 2} = \max_{f \neq 0} \left\{ \frac{||K^n f||_2}{||f||_1} \right\} \\
      &= \max_{f \neq 0} \left\{ \frac{||K^n f||_2}{||K^{n-1} f||_2} \cdots \frac{||Kf||_2}{||f||_2} \frac{||f||_2}{||f||_1} \right\} \\
      &\geq \max_{f \neq 0} \left\{ \left( \frac{||Kf||_2}{||f||_2} \right)^{n-1} \frac{||f||_2}{||f||_1} \right\} \\
      &\geq \max_{\substack{f \neq 0\\ \text{supp}(f) \subseteq U}} \left\{ \left( \frac{||Kf||_2}{||f||_2} \right)^{n-1} \frac{1}{V(e,n)^{1/2}} \right\}
      \qquad (\text{by Cauchy-Schwarz and } U = B(e,n)) \\
      &\geq \left( \frac{||K\phi_U||_2}{||\phi_U||_2} \right)^{n-1} \frac{1}{V(e,n)^{1/2}} \\
      &\geq \left( \frac{||K_U \phi_U||_2}{||\phi_U||_2} \right)^{n-1} \frac{1}{V(e,n)^{1/2}} \qquad \text{($\phi_U$ is positive)}\\
      &= \beta_U^{n-1} \frac{1}{V(e,n)^{1/2}}.
    \end{align*}
    Consider the value
    \begin{align*}
        \beta_U &= \max_{\substack{f \neq 0, \text{supp} (f) \subseteq U \\ ||f||_2 = 1}} ||K_U f||_2
        = \max_{\substack{f \neq 0, \text{supp} (f) \subseteq U \\ ||f||_2 = 1}} \angle{K f,f}_\pi
        = 1 - \min_{\substack{f \neq 0, \text{supp}(f) \subseteq U \\ ||f||_2 = 1}} \e_{\mus}(f,f).
  \end{align*}
    Consider the test functions $\mathds 1_e$ and  $\zeta_+$, the function from (\ref{eqn:defzetaplus}) with $R=n$.
    From Lemma \ref{lem:test2} and (\ref{eqn:l2testfct}), there exist a constant $C = C(k, \ell, \sp)$ such that $\frac{\e_{\mus}(\zeta_+,\zeta_+)}{||\zeta_+||^2} \leq C/n $. Thus, we have
  \begin{align*}
      \beta_U &\geq 1 - \min \left\{ \frac{\e_{\mus}(\charf_e,\charf_e)}{||\charf_e||^2}, \frac{\e_{\mus}(\zeta_+,\zeta_+)}{||\zeta_+||^2} \right\} \\
      &\geq 1 - \min \left\{ \frac{\alpha_*}{8}, \frac{C}{n} \right\}.
    \end{align*}
    Collecting our lower bound on $||k_e^n||_2$ and $\beta_U$, we derive
    \begin{align*}
      ||k_e^n||_2 &\geq \left(1-\min\left\{ \frac{\alpha_*}{8}, \frac{C}{n} \right\} \right)^n \frac{1}{V(e,n)^{1/2}} \\
      &\geq \exp \left( - \min\left\{ \frac{\alpha_*}{4}, \frac{2C}{n} \right\} n \right) \frac{1}{V(e,n)^{1/2}} \\
      &\geq e^{-2C} \frac{1}{V(e,n)^{1/2}}. \qedhere
    \end{align*}

\end{proof}

\begin{proof}[Proof of lower bound of Theorem \ref{thm:l2mixing}]
    We have that $||k_e^n - 1||_2 \geq 2 || \mus^{(n)} - \pi ||_{TV} \geq \beta_1^n$. From Theorem \ref{thm:intro-gap}, we know that there exists $a > 0$ such that $\beta_1 \geq 1 - a/\dia$. We also have the bound $\beta_{min} \geq - 1 + \frac 18 \alpha_*$ by using test function $\charf_e$ in (\ref{eqn:gap}). Let $c = \min(a/\dia, \alpha_*/8)$, and we compute further
    \begin{align}
      ||k_e^n - 1||_2 &\geq (1 - c)^n
      \geq e^{-2an/\dia}, &\text{(since $0 \leq c \leq 1/2$.)}
      \label{eq:c}
    \end{align}
    Let $C > 0$ be the constant from Lemma \ref{lem:Veigen}.
    In the case that $V(e,n) \leq e^{-4C}/4$, we have $n \leq \dia$, and
    the term $\exp(-2an/\dia)$ is roughly constant, namely,
    \[e^{-2a} \leq \exp (-2an/\dia)  \leq 1. \]
    Hence, it follows from Lemma \ref{lem:Veigen} that
    \begin{align*}
        ||k_e^n - 1||_2 \geq ||k_e^n||_2 - 1 \geq \frac{e^{-2C}}{V(e,n)^{1/2}} - 1
      \geq \frac{e^{-2C}}{2 V(e,n)^{1/2}} \geq \frac{e^{-2C}}{2 V(e,n)^{1/2}} \exp (-2an/\dia).
    \end{align*}
    When $V(e,n) \geq e^{-4C}/4$, by (\ref{eq:c}), we have
    \begin{align*}      ||k_e^n-1||_2 &\geq \exp(-2an/\dia)
      \geq \frac{e^{-2C}}{2 V(e,n)^{1/2}} \exp(-2an/\dia).
    \end{align*}

    Thus, the lower bound is true for $c_1 = \exp(-2C)/2$ and $b_1 = 1/2a$.
  \end{proof}

  The proof for continuous time is similar and we have the following result. For the definition of $H_t$ and $h_t^e$, see the Notation section at the end of Section \ref{sect:intro}.
\begin{thm}
  Let $H_t$, $t > 0$, be the continuous time Markov kernel of an $(S, \sp)$-long-jump random walk on a finite group $G$ with nilpotency $\ell$, and $\pi$ be the uniform distribution.
  Then, there exists $a_1, a_2, c_1, c_2 > 0$ such that for all $t > 0$
  \[ \frac{c_1}{V(e,t)^{1/2}} \exp\left( - t / a_1 \dia \right) \leq ||h_t^e - 1||_2  \leq \frac{c_2}{V(e,t)^{1/2}} \exp\left( - t/ a_2 \dia \right).  \]
\end{thm}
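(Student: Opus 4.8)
The plan is to follow the proof of Theorem~\ref{thm:l2mixing} almost verbatim, replacing the discrete powers $K^n$ by the semigroup $H_t = e^{-t(I-K)}$. The continuous-time setting is in fact slightly simpler: the eigenvalues of $H_t$ are $e^{-t(1-\beta_i)} \in (0,1]$, so the auxiliary bound $\beta_{\min} \geq 2\alpha_* - 1$ of (\ref{eqn:betamin}) is no longer needed, and the ``near $-1$'' part of the spectrum requires no separate treatment.

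For the upper bound, write $t = t_1 + t_2$ with $t_1 = \min(\floor{\dia}, t)$. The pseudo-Poincar\'e inequality (Theorem~\ref{thm:nilpoincare}) and the volume estimate (Corollary~\ref{cor:nilmoderate}) are statements about $\e_{\mus}$, $V$ and $\cost{\cdot}$ only, hence unchanged, and the same Nash inequality as in Theorem~\ref{thm:l2mixing} holds. By the classical implication from Nash inequalities to ultracontractivity for continuous-time Markov semigroups (see \cite{DiaSal1996:459, CouGri1997:133}), one obtains $\|H_{t_1}\|_{2 \to \infty} = \|H_{t_1}\|_{1 \to 2} \leq c_3 / V(e,t_1)^{1/2}$. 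For the remaining factor, the spectral decomposition gives $\|H_{t_2} - \pi\|_{2 \to 2} = e^{-(1-\beta_1)t_2} \leq e^{-c_2 t_2 / \dia}$ by Theorem~\ref{thm:intro-gap}. Therefore $\|h_t^e - 1\|_2 \leq \|H_{t_1}\|_{2 \to \infty}\,\|H_{t_2} - \pi\|_{2\to 2} \leq \frac{c_3}{V(e,t_1)^{1/2}}\, e^{-c_2 t_2 / \dia}$, and since $t_1 \asymp \min(\dia, t)$ while $V$ is doubling (Theorem~\ref{thm:intro-doubling}), we have $V(e,t_1) \asymp V(e,t)$; absorbing this and the bounded factor $e^{c_2 t_1/\dia}$ into the constants gives the upper bound.

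For the lower bound I would reproduce the two-step argument of Theorem~\ref{thm:l2mixing}. First, exactly as there (now using that $H_t$ contracts by $e^{-(1-\beta_1)t}$ on $\mathbf{1}^\perp$), $\|h_t^e - 1\|_2 \geq e^{-(1-\beta_1)t} \geq e^{-ct/\dia}$, where $1 - \beta_1 \leq c/\dia$ by Theorem~\ref{thm:intro-gap} and also $1 - \beta_1 \leq \tfrac{1}{8}\alpha_*$ from the test function $\charf_e$ in (\ref{eqn:gap}). Second, with $U = B(e,t)$ and $K_U$ the kernel killed outside $U$, the entrywise domination $e^{-t(I-K)} \geq e^{-t(I-K_U)}$ (valid since $K \geq K_U \geq 0$) applied to the positive top eigenfunction $\phi_U$ of $K_U$ gives $\|H_t\|_{1\to 2} \geq \|H_t \phi_U\|_2/\|\phi_U\|_1 \geq e^{-(1-\beta_U)t}/V(e,t)^{1/2}$, where $1 - \beta_U = \min\{\e_{\mus}(f,f)/\|f\|_2^2 : f \neq 0,\ \text{supp}(f) \subseteq U\}$. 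Bounding this minimum above by testing against $\charf_e$ and against the function $\zeta_+$ of (\ref{eqn:defzetaplus}) with $R = t$ (which needs $t \leq \dia/16$; the complementary range $t \gtrsim \dia$ is handled as in the discrete proof, where the exponential factor is pinched between two constants) yields $1 - \beta_U \leq \min\{\alpha_*/8,\ a/t\}$, hence $\|h_t^e\|_2 \geq e^{-2a}/V(e,t)^{1/2}$. Splitting into the cases $V(e,t) \leq e^{-4a}/4$ (which forces $t \leq \dia$) and $V(e,t) \geq e^{-4a}/4$ exactly as in Theorem~\ref{thm:l2mixing}, and combining with the first bound, yields the claimed lower bound.

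I do not expect a genuine obstacle. The only points needing care are citing the correct continuous-time form of the Nash-to-ultracontractivity implication---which is the original form of the argument and in fact cleaner than its discrete counterpart---and checking that the case split on $V(e,t)$ and the interplay between $t_1$ and $t$ go through for the quasi-metric $\cost{\cdot}$, which they do by the doubling property of Theorem~\ref{thm:intro-doubling}. A tempting shortcut---deducing the continuous-time estimates from the discrete ones via $h_t^e - 1 = e^{-t}\sum_{n} \tfrac{t^n}{n!}(k_e^n - 1)$---is clean for the upper bound but awkward for the lower bound because of the $V(e,n)^{-1/2}$ weights, so I would redo the argument directly as above.
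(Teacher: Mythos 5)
Your proposal is correct and follows essentially the route the paper intends: the paper gives no separate argument for the continuous-time statement, saying only that the proof is ``similar'' to that of Theorem \ref{thm:l2mixing}, and your adaptation (Nash inequality plus spectral contraction of $H_t$ for the upper bound, and the killed kernel $K_U$ with the test functions $\charf_e$ and $\zeta_+$ for the lower bound) is exactly that adaptation, with the correct observation that the $\beta_{\min}$ issue disappears in continuous time.
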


We now have the ingredients to prove Theorem \ref{thm:intro-mixing}, the mixing time result.

\begin{proof}[Proof of Theorem \ref{thm:intro-mixing}]
  We want to show that there exists $a_1, a_2, b_1, b_2 > 0$ such that for all $n > 0$,
  \[ a_1 \exp(-n/b_1 \dia) \leq  ||K^n (e,\cdot) - \pi||_{TV} \leq a_1 \exp(-n /b_2\dia). \]
  The lower bound follows directly from the fact that $||K^n (x,\cdot) - \pi||_{TV} \geq \beta_1^n$, see \cite[Proposition 5.5]{Sal2004:263}. For the upper bound, we know from Theorem \ref{thm:l2mixing},
  \begin{equation}
    ||K^n (e,\cdot) - \pi||_{TV} \leq \frac 12 ||k_e^n - 1||_2 \leq   \frac{c_1}{V(e,n)^{1/2}} \exp\left( - n/ c_2 \dia \right),
  \end{equation}
  for some $c_1, c_2 > 0$. When $n \geq \dia$, $V(e,n)$ is equal to one and only the exponential term remains.
  The total variation is always bounded above by $2$, and when $n \leq \dia$ the exponential term bounded above by a constant, which gives us our upper bound. It thus follows that
  \[ a_1 (\log 2) \dia \leq t_{mix} \leq a_2 (\log 4) \dia. \qedhere\]
\end{proof}

\section{On computing the diameter}
\label{sect:dia}
\newcommand{\heisen}[3]
{\begin{bmatrix}
     1 & #1 & #3 \\
     0 & 1 & #2 \\
     0 & 0 & 1
 \end{bmatrix}}

As one would expect, computing $\dia$ for arbitrary groups, $S$, and $\sp$ is a difficult problem in general.
More surprisingly, even just on the cyclic group computing $\dia$ is still quite nuanced.
In Section \ref{sect:algcycle}, we give an exact formula for $\dia$ when the $S = (1, s)$ and arbitrary $\sp$ and use those results in our examples.
We start with a remark about the relationship between $\dia$ and the diameter of the Cayley graph:
\begin{rmk}
  Let $G$ be a finite group and $S = (s_1, s_2, \dotsc, s_k)$ be a $k$-tuple whose elements generate $G$.
  Recall from the introduction, $\mathcal{W}$ is the set of words generated by an alphabet $\mathcal{S} = \{\s_1^{\pm 1}, \s_2^{\pm 1}, \dotsc, \s_k^{\pm 1}\}$ generated from $S$. We define the following quantity, which is comparable to the diameter of the Cayley graph.
  \[ D_{\mathcal{S}} = \max_{g \in G} \left\{ \min_{w \in \mathcal{W}: g = \rho(w)} \max_{1 \leq i \leq k} |\deg_{\s_i} (w) | \right\}. \]
  Notice the following facts:
  \begin{enumerate}
    \item if $\sp = (\alpha, \dotsc, \alpha)$ for some $\alpha \in (0,2)$, then $\dia = D_{\mathcal{S}}^\alpha$, and
    \item if $c > 0$ and $\sp$ such that $\sp$ and $c \sp \in (0,2)^k$, then $\dia^c = D_{S,c\sp}$.
  \end{enumerate}
\end{rmk}

Now we are ready to present three examples in the vein of the example in the introduction, Example \ref{ex:intro}.
We will compute $\dia$ for $G = \Cn$ where $\sp$ and $N$ are fixed and $S$ is set to $(1,s)$ for various $s$ of the same order.
See Appendix \ref{sect:algcycle} for detailed computations.

\begin{exmp}[Simple variation of Example \ref{ex:intro}]
  Let $t$ be a postive integer larger than $5$ and $$ N = t(t^2+1)(t^2+2).$$
  We want to find $\dia$ for $G = \Cn$, $\sp = (\alpha, 1)$, $\alpha \in (0,2)$, and $S = (1, s)$, where $s = (t^2+1)(t^2+2)$.
  By Theorem \ref{thm:N1s-diameter},
  \begin{align*}
    \dia    &\asymp \min \{ N^\alpha, \max \{ N^{4 \alpha /5} , N^{1/5} \} \}.
    \end{align*}
    Breaking this into cases, we have
    \begin{align*}
      \small
      \dia &\asymp \left\{ \begin{array}{ll}
          N^\alpha        &\text{if } \alpha \in (0,1/5), \\
          N^{1/5}         &\text{if } \alpha \in [1/5, 1/4), \\
          N^{4\alpha/5}   &\text{if } \alpha \in [1/4,2).
              \end{array}
            \right.
    \end{align*}
    For this example, $s$ is of order $N^{4/5}$ where $s$ divides $N$, which is the same set up in Example \ref{ex:intro} from the introduction.
    \label{ex:Cn1}
\end{exmp}

\begin{exmp}
  Next, we still have $t > 5$, $N = t(t^2+1)(t^2+2)$, $G = \Cn$, and $\sp = (\alpha, 1)$ for some $\alpha \in (0,2)$.
  For this example, we pick $S' = (1, s')$, with $s' = t^2 (t^2 + 2)$ and we will compute $D_{S',\sp}$.
  As in the previous example, $s' \asymp N^{4/5}$, but $s'$ doesn't quite divide $N$.
  Dividing $N$ by $s'$ using the Euclidean algorithm, we get
  \begin{align*}
    N       & = t s' + r \qquad (\text{where } r = t^3 + 2 t)  \\
    s'      & = t r.
  \end{align*}
  Applying Theorem \ref{thm:N1s-diameter}, we have
  \begin{align*}
    D_{S',\sp}    &\asymp \min \{ N^\alpha, \max \{ N^{4 \alpha /5} , N^{1/5}\},
  \max \{ N^{3\alpha/5} , N^{2/5} \} \},
  \end{align*}
  which gives us what we got in Example \ref{ex:Cn1} with two more cases
  \begin{align*}
    \small
    D_{S',\sp}&\asymp \left\{
            \begin{array}{ll}
              N^\alpha        &\text{if } \alpha \in (0,1/5), \\
              N^{1/5}         &\text{if } \alpha \in [1/5, 1/4), \\
              N^{4\alpha/5}   &\text{if } \alpha \in [1/4,1/2), \\
              N^{2/5}         &\text{if } \alpha \in [1/2, 2/3), \\
              N^{3\alpha/5}   &\text{if } \alpha \in [2/3, 2).
            \end{array}
            \right.
  \end{align*}
  \label{ex:Cn2}
\end{exmp}

\begin{exmp}
  Again, we let $t > 5$, $N = t(t^2+1)(t^2+2)$, $G = \Cn$, $\sp = (\alpha, 1)$, $\alpha \in (0,2)$.
  We choose $S'' = (1, s'')$, with $s'' = (t^2+1)^2$, which does not divide $N$ ``even more'' than in the previous example.
  Specifically, dividing $N$ by $s''$ using the Euclidean algorithm terminates in three steps instead of two:
  \begin{align*}
    N       & = t s' + t^3 + t  \qquad (\text{where } r_1 = t(t^2 + 1))\\
    s''     & = t r_1 + r_2     \qquad (\text{where } r_2 = t^2 + 1)\\
    r_1     & = t r_2.
  \end{align*}
  Applying Theorem \ref{thm:N1s-diameter}, we get
  \begin{align*}
    D_{S'',\sp}  \asymp \min \{ N^\alpha, \max \{ N^{4 \alpha /5} , N^{1/5}\},
        \max \{ N^{3\alpha/5} , N^{2/5}\},
      \max \{ N^{2\alpha/5}, N^{3/5} \} \},
  \end{align*}
  and $D_{S'', \sp}$
  \begin{align*}
    \small
    D_{S'',\sp}  \asymp
    \left\{
      \begin{array}{ll}
        N^\alpha        &\text{if } \alpha \in (0,1/5), \\
        N^{1/5}         &\text{if } \alpha \in [1/5, 1/4), \\
        N^{4\alpha/5}   &\text{if } \alpha \in [1/4,1/2), \\
        N^{2/5}         &\text{if } \alpha \in [1/2, 2/3), \\
        N^{3\alpha/5}   &\text{if } \alpha \in [2/3, 1), \\
        N^{3/5}         &\text{if } \alpha \in [1, 3/2), \\
        N^{2\alpha/5}   &\text{if } \alpha \in [3/2, 2).
    \end{array}
    \right.
  \end{align*}
  \label{ex:Cn3}
\end{exmp}

\begin{rmk}
  In the three examples above, $N$ is the same and $s, s'$ and $s''$ are comparable in size ($N^{4/5}$), but the resulting diameters $\dia$ change according to the length of the Euclidean division of $N$ by $s, s'$ or $s''$.
\end{rmk}

Next we give $\dia$ for a non-abelian group for different sets of generators.
\begin{exmp}
\label{ex:h3-simple}
 Let $G = H_3(\Cn)$ be the group of upper triangular matrices in $M_{3\times 3}(\Cn)$ with $1$'s on the diagonal.
 Let $g$ be a element of $H_3(\Cn)$, which we will write of the form
 \begin{equation} \heisen{x}{y}{z}.\label{eqn:h3-gform} \end{equation}
Let
\[
  s_1 = \heisen 1 0 0 \qquad s_2 = \heisen 0 1 0 \qquad \text{and} \qquad s_3 = \heisen 0 0 1.
\]
Consider $\dia$ with $S = (s_1, s_2, s_3)$ and $\sp = (\alpha_1, \alpha_2, \alpha_3)$, where each $\alpha_i \in (0,2)$.
Then,
\begin{equation}
    \cost{g} \asymp \max \left\{  \cl{x}^\aI, \cl{y}^\aII, \min \left\{ \cl{z}^{\alpha_3}, |z|^{\frac{\aI \aII}{\aI + \aII}} \right\} \right\}.
  \label{eqn:h3dia}
\end{equation}
Therefore, $\dia \asymp N^{\max \left\{ \alpha_1,\alpha_2,\alpha_3 \right\} }$.
\end{exmp}

If we also include $s_1^t$ in $S$, this decreases the cost of elements in both the $s_1$ and $s_3$ direction.
\begin{exmp}
  \label{ex:h3-mixed-fewer}
  Fix $t > 0$, and $N = t^2$.
  Let $G = H_3(\Cn)$,  $S = (s_1, s_1', s_2, s_3)$, $s_1' = s_1^t$, and $\sp = (\aI, \aI, \aII, \aIII)$. Let $g = s_3 ^{m_3} s_2 ^{m_2} s_1 ^{m_1} $. Define $x(m)$ and $y(m)$ so that $m = y(m) t + x(m)$ where $|x(m)| \leq t/2$, and therefore, $|y(m)| \leq t$,
  Then
  \[
      \cost{g} \asymp \max \left\{
      \max \{ |x(m_1)| , |y(m_1)| \}^\aI,
      |m_2|^\aII,
      \min \left\{ |m_3|^\aIII, \max \{ |x(m_3)|, |y(m_3)|\}^{\frac{\aI \aII}{\aI+\aII}} \right\} \right\}.
  \]
  Therefore, $\dia \asymp N^{\max \left\{ \frac{\alpha_1}{2}, \aII, \aIII, \frac{\alpha_1\alpha_2}{2(\alpha_1+\alpha_2)} \right\} }$.
\end{exmp}

\section{Generalizing results to $\sp \in (0, \infty)^k$.}
\label{sect:difalphas}

In this section, we discuss how to generalize the main results of the paper (Theorem \ref{thm:intro-gap}, Theorem \ref{thm:intro-mixing}, and Theorem \ref{thm:l2mixing}) when $\sp \in (0,\infty)^k$.

\begin{defn}
  For any $\alpha > 0$, define a function $\Phi_\alpha: \Cn \to \R$ as follows.
  \begin{equation}
    \Phi_\alpha(x) = \begin{cases}
      \cl{x}^\alpha      &\text{if $\alpha \in (0,2)$} \\[0.5em]
      \cl{x}^2 / \log \cl{x}  &\text{if $\alpha = 2$} \\[0.4em]
      \cl{x}^2           &\text{if $\alpha > 2$}
    \end{cases}.
    \label{eqn:costallalpha}
  \end{equation}
\end{defn}
We redefine the cost function from (\ref{eqn:cost}) as follows.
\begin{defn}
  For $g \in G$,
  \begin{equation}
    \cost{g} = \min_{ \substack{ w \in \W: \\ \rho(w) = g }} \left\{ \max_{1 \leq i \leq k} \left\{ \Phi_{\alpha_i} (\deg_{\s_i} (w)) \right\} \right\},
  \end{equation}
  where $\W$ is is the set of words generated by the alphabet $\mathcal S = \{ \s_1^\pm, \dotsc, \s_k^\pm \}$, and $\rho$ is the canonical projection from $\W$ to $G$.
We redefine the $(S,\sp)$-diameter with respect to the new cost function $$\dia = \max_{g \in G} \cost{g}. $$
\end{defn}

\begin{thm}
  Fix $C_0 > 0$ and  $0 < \ep < 1$. There exist $c_1, c_2 > 0$ (depending on $C_0$ and $\ep$) such that
  for any $\ell, k < C_0$, $\sp \in (\ep, 1/\ep)^k$, and any finite nilpotent group $G$ of nilpotency class $\ell$ generated by a $k$-tuple $S$,
  the $(S,\sp)$-long jump random walk satifies
   \[ c_1 / \dia \leq 1 - \beta_1 \leq c_2/\dia. \]
   \label{thm:gen-gap}
\end{thm}

To prove this we need the following lemma:

\begin{lem}
  Fix $N > 0$, $\alpha > 0$, and $G = \Cn$. Then, there exists $C(\alpha) > 0$ so that for all $r>0$, $\Phi_\alpha (y) \leq r$ and $f \in \ell^2(\pi)$,
  \begin{equation}
   \frac1{N} \sum_{x \in \Cn} |f(x) - f(x + y )|^2 \leq
   C(\alpha) \Phi_\alpha(y) \e_{ p_{N,\alpha} } (f,f),
  \end{equation}
    \label{thm:poincareoncycle-extended}
  where
  \[ p_{N,\alpha} (x) = \frac{c}{(1+\cl{x})^{1+\alpha}}
      \and c^{-1} = \sum_{j \in \Cn} \frac{1}{(1+\cl{j})^{1+\alpha}}.
 \]
\end{lem}

\begin{proof}
    When $\alpha > 2$, both $\e_p$ and $\e_\pua$ have finite second moment, where $p$ is the measure that drives lazy simple random walks on $\Cn$. Therefore,
    the two forms are comparable up to a constant, see \cite[Corollary 1.5]{pittet2000stability}. The case when $\alpha = 2$ is treated in \cite[Proposition A.4]{SalZhe2016:4133}.
\end{proof}

\begin{proof}[Proof of Theorem \ref{thm:gen-gap}]
    For the lower bound, we repeat the argument from Section 2. The appropriate version of Theorem \ref{thm:rewritingwords} comes from \cite[Theorem 2.10]{SalZhe2015:1047}. Lemma \ref{thm:poincareoncycle-extended} is the corresponding version of (\ref{eqn:orbitp}). From there, the proof follows the same line of reasoning.

For the lower bound, adapting details of Section 2.2 is a straightfoward calculus exercise. The main details of the computation is also covered in \cite[Lemma 4.2]{CheKum2018}.
\end{proof}

Still following the same reasoning as for $\sp \in (0,2)^k$,
we arrive to the following theorem.

\begin{thm}
  \label{thm:l2mixing-gen}
  Let $K$ be the Markov kernel of an $(S, \sp)$-long-jump random walk on a finite group $G$ with nilpotency $\ell$, and $\pi$ be the uniform distribution.
  There exists $b_1, b_2, c_1, c_2 > 0$ such that for all $n > 0$
  \[ \frac{c_1}{V(e,n)^{1/2}} \exp\left( - n/ b_1 \dia \right) \leq ||k_e^n - 1||_2  \leq \frac{c_2}{V(e,n)^{1/2}} \exp\left( - n/ b_2 \dia \right),  \]
  where
  \[
      V(x,r) = \sum_{y \in G: \cost{x^{-1}y} \leq r} \pi(y).
  \]
\end{thm}

\begin{exmp}
  Fix $t > 0$, and let $N = t^2$, $G = \Cn$, $S = (1, t)$, and $\sp = (1, 2)$.
  For each $g \in \Cn$, we can write $g = x_1 + x_2 t$ so that $|x_1|$ and $|x_2|$ are strictly less than $t$.
  Then,
  \begin{align*}
    \cost{g} = \cost{x_1 + x_2 t} \asymp \max \left\{ |x_1|, \frac{|x_2|^2}{\log |x_2|} \right\}.
  \end{align*}
\end{exmp}

\newpage
\appendix

\section{A note on properties of $\mus$}
\label{sect:mus-equivalence}

In this section, we prove some useful lemmas about $\mus$ that are used frequently throughout the paper, such as bounds for the normalization constants and its moments. We will also discuss how $\mus$ compares to the probability measure that drives the wrap-around model.

\begin{lem}
  Let $N$ be a positive integer, $\alpha$ be a positive real number, and
  \[c = \left( \sum_{j \in \Cn} \frac{1}{ (1 + \cl j)^{1+\alpha}} \right)^{-1}, \]
  the normalization constant on individual cycles from Definition \ref{defn:longjump}.
  Then,
  \[ \frac\alpha{2(1+\alpha)} \leq c \leq 1.\]
  \label{lem:cna_indep}
\end{lem}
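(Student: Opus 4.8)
The plan is to work with the reciprocal quantity $\cna^{-1}=\sum_{j\in\Cn}(1+|j|)^{-(1+\alpha)}$, bound it from above and below by elementary means, and then invert. Call this sum $T$ for the moment.

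First I would dispatch the upper bound $\cna\le 1$, which amounts to $T\ge 1$. This is immediate: every summand is strictly positive, and the single term $j=0$ already contributes $(1+|0|)^{-(1+\alpha)}=1$, so $T\ge 1$ and hence $\cna=1/T\le 1$.

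Next, for the lower bound $\cna\ge\frac{\alpha}{2(1+\alpha)}$, I would produce the upper estimate $T\le\frac{2(1+\alpha)}{\alpha}$. The key combinatorial observation is that as $j$ runs over $\{0,1,\dotsc,N-1\}$, the value $|j|=\min(j,N-j)$ equals $0$ exactly once and takes each value $m\in\{1,\dotsc,\lfloor N/2\rfloor\}$ at most twice (namely at $j=m$ and $j=N-m$; when $N$ is even or small one overcounts, which is harmless). Extending the sum over all $m\ge 1$ and using the standard integral comparison for the tail of a $\zeta$-type series,
\[
  T\ \le\ 1+2\sum_{m\ge 1}\frac{1}{(1+m)^{1+\alpha}}
   \ =\ 1+2\sum_{k\ge 2}\frac{1}{k^{1+\alpha}}
   \ \le\ 1+2\int_1^{\infty}\frac{dx}{x^{1+\alpha}}
   \ =\ 1+\frac{2}{\alpha}
   \ =\ \frac{\alpha+2}{\alpha}
   \ \le\ \frac{2(1+\alpha)}{\alpha},
\]
the last inequality being $\alpha+2\le 2+2\alpha$, valid since $\alpha>0$. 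Inverting yields $\cna\ge\frac{\alpha}{2(1+\alpha)}$, completing the proof.

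The whole argument is a routine estimate, and I do not anticipate a genuine obstacle; the only points needing a little care are the bookkeeping of how often each value of $|j|$ is attained (and checking that the overcounting for even or small $N$ does no harm) and the standard monotone-function integral comparison used for the tail sum.
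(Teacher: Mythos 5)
Your proposal is correct and follows essentially the same route as the paper: the $j=0$ term gives $\cna^{-1}\ge 1$ (hence $\cna\le 1$), and pairing the values of $|j|$ plus an integral comparison bounds $\cna^{-1}$ by $2(1+\alpha)/\alpha$, yielding the lower bound. Your bookkeeping is in fact slightly tighter (you get $1+2/\alpha$ before relaxing to $2(1+1/\alpha)$), but the argument is the same.
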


\begin{proof}
   Since the summand corresponding to $j = 0$ is $1$, $c$ is less than or equal to $1$.
   For the lower bound, we have that for all $N \geq 1$,
    \begin{align*}
      c^{-1} &\leq  2 \left( 1 + \sum_{k=1}^{N/2} \frac{1}{(1+k)^{1+\alpha}}\right)
      \leq 2 \left( 1 + \int_{0}^{N/2} \frac{1}{(1+s)^{1+\alpha}} \; ds\right)
      \leq 2 \left( 1 + \frac{1}{\alpha} \right).
    \end{align*}
\end{proof}

\begin{lem}
  Let $p:\Cn \to [0,1],$ $p(j) = c/(1+\cl{j})^{1+\alpha}$ with $\sum_{j \in \Cn} p(j) = 1$.
  There exists a constant $C(\alpha) > 0$, so that
\[ \sum_{\substack{|t| > a }} p (t) \leq  \frac{C(\alpha)}{a^{\alpha}} \and
  \sum_{\substack{|t| < a }} |t|^2 p (t) \leq C(\alpha) a^{2 - \alpha}. \]
  \label{lem:upperpi}
\end{lem}

\begin{proof}
  If $a = 1$, then both sums are less than or equal to $1 = 1/a^{\alpha_i} = a^{2-\alpha}$.
  If $a > 1$, we can compute
  \begin{align*}
    \sum_{|t| > a} p (t) &\leq 2 \int_{\max(2,a)-1}^\infty p(t) \; dt
    \leq 2c \int_{a/2}^\infty \frac{dt}{(1+t)^{1+\alpha}} \\
    &= \frac{2c}{\alpha (1+a/2)^{\alpha}}
    \leq \frac{2c (2^{\alpha})}{\alpha} \frac{1}{a^{\alpha}}.
  \end{align*}
  Moreover,
  \begin{align*}
    \sum_{|t| < a} |t|^2 p (t) &\leq 2 \int_0^{4a} t^2 p(t) \; dt
    \leq 2c \int_0^{4a} (1+t)^{1-\alpha} \; dt \\
    &\leq \frac{2^5c}{2-\alpha}a^{2-\alpha}
  \end{align*}
  Thus, the statement of the lemma is true for $C(\alpha) = \max\{1, 2 c (2^{\alpha}) / \alpha, 2^5c/(2-\alpha) \}$.
\end{proof}

Next, we prove that the wrap-around definition described in the introduction and one given in Definition \ref{defn:longjump} are comparable.
Specifically, we will show that on cycles, the probability measures are comparable up to multiplicative constants depending only on $\alpha$. Therefore, the probability measures on the full group are comparable up to constants depending on $\sp$.
For fixed $N > 0$ and $\alpha \in (0,2)$, the measure driving our long-jump random walks on $\Cn$ is
\begin{align*}
  p(g) &= \frac{c}{(1+\cl g)^{1+\alpha}},
    \qquad \text{where } c^{-1} = \sum_{j \in \Cn} \frac{1}{(1+\cl j)^{1+\alpha}},
\end{align*}
and the measure driving the wrap-around model is
\begin{align*}
  \tilde p (g) = \sum_{j \in \Z} \frac{\tilde c}{(1+|g + N j|)^{1+\alpha}},
\qquad \text{where } {\tilde c}^{-1} = \sum_{g \in \Cn} \left( \sum_{j \in \Z} \frac{1}{(1+|g + Nj|)^{1+\alpha}} \right).
\end{align*}
\begin{lem}
  For all $\alpha > 0$, there exist constants $c_1, c_2 > 0$ depending on $\alpha$, such that for all positive integer $N$ and $k \in \Cn$,
  \[ c_1 p(k) \leq \tilde p(k) \leq c_2 p(k). \]
\end{lem}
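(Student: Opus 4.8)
The plan is to separate the normalising constants from the pointwise comparison. Write $q(k) = (1+|k|)^{-(1+\alpha)}$ for $k \in \Cn$ and $\tilde q(k) = \sum_{j \in \Z} (1+|k+Nj|)^{-(1+\alpha)}$, so that $p = \cna\, q$ and $\tilde p = c_\alpha\, \tilde q$. First I would observe that $c_\alpha$ depends only on $\alpha$: as $g$ ranges over $\Cn$ and $j$ over $\Z$, the integer $g+Nj$ runs over all of $\Z$ exactly once, so $c_\alpha^{-1} = \sum_{m \in \Z}(1+|m|)^{-(1+\alpha)} = 2\zeta(1+\alpha) - 1$, which is finite and positive since $1+\alpha > 1$. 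Combined with Lemma \ref{lem:cna_indep}, which gives $\tfrac{\alpha}{2(1+\alpha)} \le \cna \le 1$, the ratio $c_\alpha/\cna$ lies in an interval bounded above and below by positive constants depending only on $\alpha$. Hence it suffices to prove $q(k) \le \tilde q(k) \le C(\alpha)\, q(k)$ for some constant $C(\alpha)$.

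Both $q$ and $\tilde q$ are invariant under $k \mapsto N-k$ — for $\tilde q$ this is the substitution $j \mapsto -j-1$ in the sum — so I may assume $0 \le k \le N/2$, in which case $|k| = k$. The lower bound is then immediate: the $j=0$ term of $\tilde q(k)$ is exactly $q(k)$, so $\tilde q(k) \ge q(k)$.

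For the upper bound I would estimate the tail $\sum_{j \ne 0}(1+|k+Nj|)^{-(1+\alpha)}$. For $j \ge 1$ one has $1 + |k+Nj| = 1+k+Nj \ge Nj$, and for $j = -m$ with $m \ge 1$ one has $1 + |k - Nm| = 1 + Nm - k \ge N(m - \tfrac12) \ge Nm/2$ (using $k \le N/2$). Summing the two resulting series gives $\sum_{j \ne 0}(1+|k+Nj|)^{-(1+\alpha)} \le (1 + 2^{1+\alpha})\,\zeta(1+\alpha)\, N^{-(1+\alpha)}$. Since $1 + k \le N$ when $N \ge 2$ (the case $N = 1$ being trivial), we have $N^{-(1+\alpha)} \le q(k)$, and therefore $\tilde q(k) = q(k) + \sum_{j\ne 0}(\cdots) \le \bigl(1 + (1+2^{1+\alpha})\zeta(1+\alpha)\bigr) q(k)$, which is the desired bound. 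Combining this with the first paragraph yields $c_1 p(k) \le \tilde p(k) \le c_2 p(k)$ with $c_1, c_2$ depending only on $\alpha$.

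There is no serious obstacle here; the only point requiring a little care is the index bookkeeping around the two ``closest'' copies of $k$ in $\Z$ (the terms $j = 0$ and, before the symmetry reduction, $j = -1$), which is precisely what restricting to $0 \le k \le N/2$ handles cleanly.
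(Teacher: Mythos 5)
Your proposal is correct and follows essentially the same route as the paper: reduce to $0\le k\le N/2$ by symmetry, get the lower bound from the $j=0$ term of the wrapped sum, bound the $j\neq 0$ tail by a constant (depending only on $\alpha$) times the main term, and control the normalizing constants via Lemma \ref{lem:cna_indep}. The only cosmetic differences are that you bound the tail absolutely by $(1+2^{1+\alpha})\zeta(1+\alpha)N^{-(1+\alpha)}$ and compare with $q(k)$ via $1+k\le N$, where the paper bounds each tail term by $\bigl(2/(1+|j|)\bigr)^{1+\alpha}$ times the main term, and that you make explicit the (correct, and actually needed) observation that $c_\alpha^{-1}=2\zeta(1+\alpha)-1$ depends only on $\alpha$.
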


\begin{proof}
  Fix an integer $k \in [0,N/2]$, and consider
  \begin{align}
    \frac{\tilde p(k)}{p(k)} &= \frac{\tilde c}{c} \sum_{j \in \Z} \frac{(1+k)^{1+\alpha}} {(1 + |k+Nj|)^{1+\alpha} }
    = \frac{\tilde c}{c} \left( 1 + \sum_{j \neq 0}  \frac{(1+k)^{1+\alpha}} {(1 + |k+Nj|)^{1+\alpha}} \right)
    \label{eq:ratio}
  \end{align}
  It will be convenient to define the constant $A = \sum_{j = 1}^\infty \frac{1}{ (1+j)^{1+\alpha} }$.
  For the lower bound of (\ref{eq:ratio}), notice that the term in the parenthesis is bounded below by $1$. Moreover,
  since rearranging the summand gives that $\tilde c^{-1} \leq 2A$,
  combined with Lemma \ref{lem:cna_indep}, we see that we can set $c_1 =\frac{2(1+\alpha)}{\alpha A}$.

  For the upper bound of (\ref{eq:ratio}), first when $k = 0$,
  for all $k \in [0,N/2]$ and $j > 0$,
  \begin{align*}
    \frac{1+k}{1+k+j N} &= \frac{1}{1 + j \frac{N}{k+1}} \leq \frac{2}{1+j},
  \end{align*}
  where the last inequality is because $k+1 \leq 2N$.
  When $j < 0$, we have
  \begin{align*}
    \frac{1+k}{1-j N+k} \leq \frac{1}{1+|j| \frac{N}{k+1}} \leq \frac{2}{1+|j|}.
  \end{align*}
  Then,
  \[\sum_{j \neq 0}  \frac{(1+k)^{1+\alpha}} {(1 + |k+Nj|)^{1+\alpha}} \leq \sum_{j \neq 0} \left( \frac{2}{1 + |j|} \right)^{1+\alpha} \leq 2^{2+\alpha} A. \]
  Thus, we can set $c_2 = (1+2^{2+\alpha})A / c$.
\end{proof}

\section{Dirichlet form estimates}
\label{sect:dirform}
In this section, we establish various estimates on the Dirichlet form. We will also let $\pi$ always be the uniform distribution for simplicity of proof, but all theorems can be made to work for arbitrary distributions. The techniques in this section were inspired by techniques developed in \cite{SalZhe2015:1047}, \cite{SalZhe2015:837}, and \cite{SalZhe2016:4133}. In particular, see Section 4 of \cite{SalZhe2015:837}.

\begin{prop}
  \label{prop:dirprops}
  Let $G$ be a finite group, $||\cdot||$ a quasi-norm on $G$, $\mu$ a probability measure on $G$, and $\pi$ be the uniform distribution.
  \begin{enumerate}
    \item Suppose that there exists a function $a(r) \geq 0$, such that for all $r \geq 0$, $f \in \ell^2(\pi)$, $y \in B(0,r)$,
      \[\sum_{x \in G} |f(x) - f(xy)|^2 \pi(x) \leq a(r) \e_\mu(f,f).\]
      Then for all $r \geq 0$,  $f \in \ell^2(\pi)$,
      \[||f-f_r||^2_2 \leq a(r) \e_\mu(f,f). \]
    \item Fix $s \in G$ and $n$ to be the order of $s$ in $G$. Let $\mu$ is a probability distribution on $G$ of the form
      \[\mu(g) = \sum_{j \in \Cn } \charf_{s^j} (g) p (j) , \]
      where $p$ is a probability distribution on $\Cn$.
      Let also that $||\cdot||$ be a quasi-norm of the form
      \[ ||g|| = \begin{cases}
      ||m||_0 & \text{if $g = s^m$}\\
      \infty &\text{otherwise}
       \end{cases}, \]
       where $||\cdot||_0$ is a quasi-norm on $\Cn$.

       Suppose that there exists a real-valued function $a(r) \geq 0$ such that for all $r \geq 0$, $f: \Cn \to \R$, $y \in \Cn$ where $||y||_0 \leq r$,
       \[ \frac{1}{N} \sum_{x \in \Cn} |f(x) - f(x+y)|^2 \leq a(r) \e_{p}(f,f). \]
      Then we have that for all $r \geq 0$, $f: G \to \R$, $y \in G$ where $||y|| \leq r$ then
       \[ \frac{1}{|G|} \sum_{x \in G} |f(x) - f(xy)|^2 \leq a(r) \e_{\mu}(f,f). \]
    \item Let $\mu: G \to \R$ be a convex combination of probability measures $\mu_i$: $\mu = \sum_{i=1}^k c_i \mu_i.$
      Then for any $f \in \ell^2(\pi)$, then
      \[ \e_{\mu} (f,f) = c_1 \e_{\mu_{1}}(f,f) + \dotsb + c_k \e_{\mu_{k}}(f,f).\]
  \end{enumerate}
\end{prop}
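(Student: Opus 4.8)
The plan is to prove the three items in turn; each reduces to a short direct computation, and the only real subtlety is matching normalization constants in~(2).

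\emph{For item (1),} I would start from the pointwise identity $f(x)-f_r(x)=\frac{1}{V(x,r)}\sum_{y\in B(x,r)}(f(x)-f(y))\,\pi(y)$, which is valid because $x\in B(x,r)$ (since $||e||=0\le r$) and $\sum_{y\in B(x,r)}\pi(y)=V(x,r)$. Applying Cauchy--Schwarz (equivalently, Jensen's inequality) against the probability weights $\pi(y)/V(x,r)$ supported on $B(x,r)$ gives $|f(x)-f_r(x)|^2\le \frac{1}{V(x,r)}\sum_{y\in B(x,r)}|f(x)-f(y)|^2\,\pi(y)$. Because $\pi$ is uniform, balls are left-translates, $B(x,r)=xB(e,r)$, and $V(x,r)=V(e,r)$ for all $x$; multiplying by $\pi(x)$, summing over $x\in G$, and reindexing $y=xz$ with $z\in B(e,r)$ turns the estimate into $||f-f_r||_2^2\le \frac{1}{V(e,r)}\sum_{z\in B(e,r)}\pi(z)\Bigl(\sum_{x\in G}|f(x)-f(xz)|^2\pi(x)\Bigr)$. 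Each inner sum is at most $a(r)\,\e_\mu(f,f)$ by hypothesis (as $z\in B(0,r)$), and $\sum_{z\in B(e,r)}\pi(z)=V(e,r)$, so $||f-f_r||_2^2\le a(r)\,\e_\mu(f,f)$.

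\emph{For item (2),} I would decompose $G$ into the left cosets $x_i\langle s\rangle$, $i=1,\dots,|G|/N$, of the cyclic subgroup $\langle s\rangle$, where $N$ is the order of $s$, and set $g_i\colon\Cn\to\R$, $g_i(j)=f(x_i s^j)$. Since $\mu$ is supported on $\langle s\rangle$ with $\mu(s^j)=p(j)$, expanding $\e_\mu(f,f)=\tfrac12\sum_{x\in G}\sum_{j\in\Cn}|f(x)-f(xs^j)|^2 p(j)\,\pi(x)$ one coset at a time (using $\pi\equiv 1/|G|$) gives $\e_\mu(f,f)=\frac{N}{|G|}\sum_i\e_p(g_i,g_i)$, where $\e_p$ is taken with the uniform distribution $1/N$ on $\Cn$. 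Likewise, for $y=s^m$ with $||y||=||m||_0\le r$, one has $\frac{1}{|G|}\sum_{x\in G}|f(x)-f(xy)|^2=\frac{1}{|G|}\sum_i\sum_{l\in\Cn}|g_i(l)-g_i(l+m)|^2$. Applying the hypothesized one-dimensional inequality $\frac1N\sum_{l\in\Cn}|g_i(l)-g_i(l+m)|^2\le a(r)\,\e_p(g_i,g_i)$ to each $i$ (legitimate because the stated form of the quasi-norm gives $||m||_0\le r$) and summing over $i$, the factors of $N$ and $1/|G|$ cancel, leaving $\frac{1}{|G|}\sum_{x\in G}|f(x)-f(xy)|^2\le a(r)\,\e_\mu(f,f)$.

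\emph{For item (3),} the assertion is immediate: expanding $\e_\mu(f,f)=\tfrac12\sum_{x,y\in G}(f(x)-f(xy))^2\,\mu(y)\,\pi(x)$ and substituting $\mu=\sum_{i=1}^k c_i\mu_i$, linearity of the finite sum in its measure argument yields $\e_\mu(f,f)=\sum_{i=1}^k c_i\,\e_{\mu_i}(f,f)$. I do not anticipate a genuine obstacle anywhere here; this is essentially a bookkeeping lemma. The two spots that most warrant care are the normalization bookkeeping in~(2) (the interplay of the $1/N$ weight on $\Cn$, the $1/|G|$ weight on $G$, and the count $|G|/N$ of cosets) and, in~(1), the observation that uniformity of $\pi$ makes balls left-invariant and $V(x,r)$ independent of $x$.
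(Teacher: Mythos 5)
Your proposal is correct and follows essentially the same route as the paper's proof: for (1) the Jensen/Cauchy--Schwarz bound against the normalized ball average using uniformity of $\pi$, for (2) the decomposition of $G$ into cosets of $\langle s\rangle$ with the identity $\e_\mu(f,f)=\frac{N}{|G|}\sum_i \e_p(g_i,g_i)$ applied coset by coset, and for (3) direct linearity of the Dirichlet form in the measure.
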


\begin{proof}
  \begin{enumerate}
    \item For all $r \geq 0$ and $f \in \ell^2(\pi)$, we have
  	  \begin{align*}
  	    ||f-f_r||_2^2 &\leq \sum_{x \in G} |f(x) - f_r (x)|^2 \pi (x) \\
  	    &= \sum_{x \in G} \left| \frac{1}{V(e,r)} \sum_{y \in B(e,r)} (f(x) - f(xy)) \pi (xy) \right|^2 \pi (x) \\
  	    &= \sum_{x \in G} \left| \sum_{y \in B(e,r)} (f(x) - f(xy)) \frac{1}{\# B(e,r)} \right|^2 \pi (x) \\
  	    &\leq \sum_{x \in G} \frac{1}{V(e,r)} \sum_{y \in B(e,r)} \left| (f(x) - f(xy))  \right|^2 \pi (x) \qquad \text{(by Jensen's inequality)}\\
  	    &\leq \frac{1}{\# B(e,r)} \sum_{y \in B(e,r)} a(r) \e_\mu(f,f) \qquad \text{(by assumption)}\\
        &= a(r) \e_\mu(f,f).
  	  \end{align*}
    \item   Fix $r > 0$, $f: G \to \R$ and $y_0 \in G$ such that $||y_0|| \leq r$. By the definition of $||\cdot||$, $y_0$ is of the form $s^m$ where $||m||_0\leq r$.
      We denote the cosets of $\langle s \rangle \leq G $ as $[x_j \langle s \rangle]$ where the $x_j$'s are fixed representatives of the cosets.
      Then we have
      \begin{align*}
          \e_{\mu }(f,f)
          &= \frac{1}{|G|} \sum_{x,y \in G} |f(x) - f(xy)|^2 \mu (y) \\
          &= \frac{1}{|G|} \sum_{x,y \in G} |f(x) - f(xy)|^2 \sum_{j \in \Cn} \charf_{s^j}(y) p (j) \\
          &= \frac{1}{|G|} \sum_{x \in G} \sum_{j \in \Cn} |f(x) - f(x s^j)|^2 p (j).
      \end{align*}
      For each $x$ there is an unique representation as a product of one of the $x_j$'s and an element in $\langle s \rangle$.
      So we have
      \begin{align*}
        \e_{\mu }(f,f) = \frac{N}{|G|} \sum_{j = 1}^{|G|/N} \sum_{\ell, \ell' \in \Cn} |f(x_j s^{\ell'}) - f(x_j s^{\ell+\ell'})|^2 p (\ell) \frac{1}{N}.
      \end{align*}
      Then define $f_j: \Cn \to \R$ to map $\ell \mapsto f(x_j s^{\ell})$, and we have
      \begin{equation}
           \e_{\mu}(f,f) = \frac{N}{|G|} \sum_{j = 1}^{|G|/N} \e_{p} (f_j, f_j).
           \label{eqn:breakupeform}
      \end{equation}
     Then
      \begin{align*}
        \frac{1}{|G|} \sum_{x \in G} |f(x) - f(x s^m)|^2 &= \frac{1}{|G|} \sum_{j = 1}^{|G|/N} \sum_{\ell \in \Cn} |f(x_j s^\ell) - f(x_j s^{\ell+m})|^2 \\
        &= \frac{1}{|G|} \sum_{j = 1}^{|G|/N} \sum_{\ell \in \Cn} |f_j(\ell) - f_j(\ell+m)|^2 \\
        &\leq \frac{N}{|G|} a(r) \sum_{j = 1}^{|G|/N} \e_\mu(f_j,f_j) \qquad \text{(by assumption)} \\
        &= a(r) \e_{\mu}(f,f) \qquad \text{(by (\ref{eqn:breakupeform}))}.
      \end{align*}
    \item   \begin{align*}
          \e_{\mu} (f,f) &= \sum_{x,y} |f(x) - f(xy)|^2 \mu (y) \pi(x) \\
          &= \sum_{x,y} |f(x) - f(xy)|^2 (c_1 \mu_{1}(y) + \dotsb + c_k \mu_{k}(y)) \pi (x) \\
          &= c_1 \sum_{x,y} |f(x) - f(xy)|^2 \mu_{1}(y) \pi(x) + \dotsb + c_k \sum_{x,y} |f(x) - f(xy)|^2 \mu_{k}(y) \pi(x) \\
          &= c_1 \e_{\mu_1}(f,f) + \dotsb + c_k \e_{\mu_k}(f,f)
      \end{align*}
  \end{enumerate}
\end{proof}

\begin{defn}\label{defn:regularitycondi}
   Define $p$ a symmetric distribution on $\Cn$ to satisfy \emph{regularity condition (A)} if there exists a constant $C_p > 0$ such that
   for all $k \in [0,N/2]$
   \[
        \min_{I_k} p \geq C_p \max_{I_k} p,
   \]
    where $I_k = [\floor{k/{\color{red}}  9}, k]$. Since $p$ is symmetric the inequality remains true for $k \in [-N/2, 0]$ with $I_k = [k, \ceil{k/9}]$.
\end{defn}

\begin{lem}
  Let $N \geq 0$ and  $\alpha > 0$. The probability distribution $\pua: \Cn \to \R$, where
  \begin{equation}
    \pua(x) = \frac{\cna}{(1+|x|)^{1+\alpha}} , \qquad
    \text{and }\cna^{-1} = \sum_{j \in \Cn} \frac{1}{(1+|j|)^{1+\alpha}}.
    \label{eqn:pua}
  \end{equation}
  satisfies regularity condition (A)
  where the constant $C_{\pua}$ depends only on $\alpha$, and not $N$.
  \label{lem:regularity-of-power-law}
\end{lem}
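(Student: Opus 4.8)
The plan is to use the fact that $\pua$ is a strictly decreasing function of $|x|$. Since $k \in [0, N/2]$, the integer interval $I_k = [\floor{k/9}, k]$ lies entirely in the range $[0, N/2]$ where $|x| = x$, so $\pua$ is monotone decreasing along $I_k$ and its extrema are attained at the endpoints:
\[
  \min_{I_k} \pua = \pua(k) = \frac{\cna}{(1+k)^{1+\alpha}}, \qquad
  \max_{I_k} \pua = \pua(\floor{k/9}) = \frac{\cna}{(1+\floor{k/9})^{1+\alpha}} .
\]
The point of this reduction is that the normalizing constant $\cna$ cancels in the ratio $\min_{I_k}\pua / \max_{I_k}\pua$, which is precisely why the resulting constant $C_{\pua}$ will be independent of $N$.

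With that in hand, the ratio becomes
\[
  \frac{\min_{I_k}\pua}{\max_{I_k}\pua} = \left( \frac{1 + \floor{k/9}}{1+k} \right)^{1+\alpha},
\]
so it remains to bound $(1+\floor{k/9})/(1+k)$ below by a positive constant uniform in $k$. For $k = 0$ the interval degenerates to a point and the ratio is $1$. For $k \geq 1$ I would use $\floor{k/9} > k/9 - 1$, hence $1 + \floor{k/9} > k/9$, together with $k/(1+k) \geq 1/2$, to get $(1+\floor{k/9})/(1+k) > \tfrac19\cdot\tfrac{k}{1+k} \geq \tfrac{1}{18}$. Therefore $\min_{I_k}\pua \geq 18^{-(1+\alpha)}\max_{I_k}\pua$, and one may take $C_{\pua} = 18^{-(1+\alpha)}$, which depends only on $\alpha$ as required; note $18^{-(1+\alpha)} \le 1$ so this value also covers the $k=0$ case.

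Finally, the symmetric case $k \in [-N/2, 0]$ with $I_k = [k, \ceil{k/9}]$ follows immediately from $\pua(x) = \pua(-x)$, so no separate argument is needed. There is no genuine obstacle in this lemma; the only points that need a little care are checking that $I_k$ stays inside $[0, N/2]$ so that monotonicity applies, and handling the floor for small $k$ (in particular $k=0$ and $1 \le k \le 8$, where $\floor{k/9} = 0$), both of which are absorbed by the crude bound above.
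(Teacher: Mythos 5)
Your proof is correct and follows essentially the same route as the paper's: monotonicity of $\pua$ on $[0,N/2]$, the floor bound $\floor{k/9} > k/9 - 1$, and the same final constant $C_{\pua} = 18^{-(1+\alpha)}$. Your observation that $\cna$ cancels in the ratio is in fact slightly cleaner than the paper, which first derives bounds on $\cna$ and threads the inequality $\cna \leq 1$ through the estimate rather than cancelling it outright.
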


\begin{proof}
  By Lemma \ref{lem:upperpi}, we know that
    \[ \frac {\alpha}{2 (1+\alpha) } \leq \cna \leq 1.\]

  Let $k \in [0,N/2]$.
  In the trivial case, when $k \in [0,8]$, $I_k$ only includes $0$, so we have $\min_{I_k} \pua = \max_{I_k} \pua = \cna$.

  In the nontrivial case, when $k \geq 9$, we have $\floor{k/9} \geq \max \{ 1 , k/9 - 1 \}$.

  Then we have
    \begin{align*}
      \max_{I_k} \pua
        &= c (1 + \floor{k/9})^{- (1 + \alpha)}
  	    \leq (1 + \max \{1, k/9-1\})^{-(1+\alpha)} \\
        &\leq 2^{1+\alpha} (2 + 1 + k/9 - 1)^{- (1 + \alpha)}
        \leq 18^{1+\alpha} (18 + k)^{- (1+\alpha)}\\
        &\leq 18^{1+\alpha} (1 + k)^{- (1+\alpha)}
        \leq 18^{1+\alpha} \min_{I_k} \pua.
    \end{align*}
  Thus, $C_\pua$ can be set to $18^{-1-\alpha}$.
\end{proof}

Next we show that that the pseudo-Poincar\'e inequality holds for $\pua$, as defined in (\ref{eqn:pua}), on the cyclic group.

\begin{thm}
  Fix $\alpha \in (0,2)$ and $n > 0$. Then there exists $C(\alpha)$ so that for all $r>0$, $|y|^\alpha < r$ and $f \in \ell^2(\pi)$
  \begin{equation}
   \frac1{N} \sum_{x \in \Cn} |f(x) - f(x + y )|^2 \leq
   C(\alpha) |y|^\alpha \e_{\pua} (f,f).
   \label{eqn:e-baby}
  \end{equation}
  \label{thm:poincare-circle-02}
\end{thm}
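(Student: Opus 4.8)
The plan is to diagonalize both sides using the characters of $\Cn$ and reduce the inequality to a single pointwise estimate in frequency space. Write $\chi_\xi(x)=e^{2\pi i x\xi/N}$ for $\xi\in\Cn$ and $\hat f(\xi)=\sum_{x\in\Cn}f(x)\chi_{-\xi}(x)$. Since $\e_{\pua}$ is a symmetric convolution form on the abelian group $\Cn$, Plancherel gives
\[
\frac1N\sum_{x\in\Cn}|f(x)-f(x+y)|^2=\frac1{N^2}\sum_{\xi\in\Cn}|1-\chi_\xi(y)|^2\,|\hat f(\xi)|^2,
\qquad
\e_{\pua}(f,f)=\frac1{2N^2}\sum_{\xi\in\Cn}\Lambda(\xi)\,|\hat f(\xi)|^2,
\]
where $\Lambda(\xi)=\sum_{j\in\Cn}\pua(j)\,|1-\chi_\xi(j)|^2$. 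Hence it suffices to prove the pointwise-in-frequency bound $|1-\chi_\xi(y)|^2\le C(\alpha)\,|y|^\alpha\,\Lambda(\xi)$ for every $\xi$ (trivially so for $\xi=0$, both sides vanishing), and then sum against $|\hat f(\xi)|^2/N^2$.

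For the left-hand side, put $\theta=2\pi\hat\xi/N$ with $\hat\xi$ the representative of $\xi$ in $(-N/2,N/2]$, so $|\theta|\le\pi$; then
\[
|1-\chi_\xi(y)|^2=4\sin^2(y\theta/2)\le 4\min\{1,(y\theta/2)^2\}\le 4\,|y\theta/2|^\alpha=2^{2-\alpha}\,|y|^\alpha\,|\theta|^\alpha ,
\]
using the elementary inequality $\min\{1,t^2\}\le|t|^\alpha$, valid for $0<\alpha\le 2$. The essential step is the matching lower bound on the symbol, $\Lambda(\xi)\ge c(\alpha)\,|\theta|^\alpha$ for $\xi\ne 0$, with $c(\alpha)$ independent of $N$. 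Here I would write $\Lambda(\xi)\asymp\cna\sum_{1\le j\le N/2}(1+j)^{-(1+\alpha)}\sin^2(j\theta/2)$, retain only a block of $\asymp 1/|\theta|$ consecutive integers $j$ on which $\sin^2(j\theta/2)$ is bounded below by an absolute constant — e.g.\ $j\in[\pi/(2|\theta|),\pi/|\theta|]$ when $|\theta|$ is small, and $j=1$ when $|\theta|$ is bounded below — note such a block stays inside $[1,N/2]$ precisely because $\xi\ne 0$ forces $|\theta|\ge 2\pi/N$, observe that on the block $\pua(j)\asymp\cna\,|\theta|^{1+\alpha}$, and conclude $\Lambda(\xi)\gtrsim\cna\,|\theta|^{1+\alpha}\cdot(1/|\theta|)=\cna\,|\theta|^\alpha$; Lemma~\ref{lem:cna_indep} then bounds $\cna$ below by $\alpha/(2(1+\alpha))$, a constant depending only on $\alpha$. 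Combining the two displays gives $|1-\chi_\xi(y)|^2\le (2^{2-\alpha}/c(\alpha))|y|^\alpha\Lambda(\xi)$, and summing over $\xi$ yields the theorem.

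The genuinely delicate point is this symbol lower bound: one must choose the block of good frequencies to contain $\asymp 1/|\theta|$ integers while still lying inside the fundamental domain, and it is exactly here that the hypothesis $\alpha<2$ is used — for $\alpha=2$ the same partial sum is only of order $|\theta|^2\log(1/|\theta|)$, which is the source of the logarithmic correction in $\Phi_2$ in~\eqref{eqn:costallalpha}. A Fourier-free alternative is also available: setting $\Psi(y):=\frac1N\sum_x|f(x)-f(x+y)|^2$, the function $\Psi$ is quasi-subadditive, $\Psi(a+b)\le 2\Psi(a)+2\Psi(b)$, so for each $j$ at least one of $\Psi(j),\Psi(|y|-j)$ exceeds $\Psi(y)/4$; pairing $j\leftrightarrow|y|-j$ shows at least half of $\Psi(1),\dots,\Psi(|y|-1)$ exceed $\Psi(y)/4$, whence $\e_{\pua}(f,f)\ge\tfrac12\sum_{j=1}^{|y|-1}\pua(j)\Psi(j)\gtrsim\cna\,\Psi(y)\sum_{j\asymp|y|}j^{-(1+\alpha)}\gtrsim\cna\,|y|^{-\alpha}\Psi(y)$, which is the claim. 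I would present the Fourier version, since it isolates transparently where the exponent $\alpha$ comes from.
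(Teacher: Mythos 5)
Your proposal is correct, but it takes a genuinely different route from the paper's proof. The paper argues entirely in physical space: it fixes the block $I_0$ of intermediate displacements $z$ roughly in $[\lfloor y/4\rfloor,\,y/2]$, splits $f(x)-f(x+y)$ through the points $x+z$, and uses the regularity property of the power law (Definition \ref{defn:regularitycondi}, Lemma \ref{lem:regularity-of-power-law}), namely $\pua(y)\le C_\pua^{-1}\pua(z)$ and $\pua(y)\le C_\pua^{-1}\pua(y-z)$ for $z\in I_0$, to absorb both halves into $\e_{\pua}(f,f)$, finishing with $\#I_0\cdot\pua(y)\ge c(\alpha)\,\cna\,|y|^{-\alpha}$. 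Your main argument instead diagonalizes both quadratic forms by characters and reduces everything to the symbol estimate $|1-\chi_\xi(y)|^2\le C(\alpha)|y|^\alpha\Lambda(\xi)$, proved from $4\sin^2(y\theta/2)\le 2^{2-\alpha}|y|^\alpha|\theta|^\alpha$ together with the lower bound $\Lambda(\xi)\ge c(\alpha)\,\cna\,|\theta|^\alpha$ coming from a block of about $1/|\theta|$ frequencies; the checks you flag (the block fits inside $[1,N/2]$ because $\xi\neq 0$ forces $|\theta|\ge 2\pi/N$; the regime where $|\theta|$ is bounded below is handled by $j=1$; $\cna\ge\alpha/(2(1+\alpha))$ by Lemma \ref{lem:cna_indep}) are exactly the right ones, and all constants depend only on $\alpha$, as required. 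The Fourier route is very transparent about where the exponent comes from (the symbol behaves like $|\theta|^\alpha$) and points directly at the sharp $\alpha\ge 2$ variants; the paper's real-space argument needs no group duality and only the comparability of $\pua$ on dyadic-type blocks, and either proof feeds equally well into Proposition \ref{prop:dirprops} (2), which uses the cyclic-group inequality as a black box. Two minor remarks: your parenthetical about where $\alpha<2$ is used is slightly off, since the block argument still yields $\Lambda(\xi)\ge c\,|\theta|^2$ at $\alpha=2$, so the inequality with $|y|^2$ would still follow and $\alpha<2$ only matters for sharpness (the $\log$ in $\Phi_2$); and your Fourier-free alternative (quasi-subadditivity of $\Psi$ plus the pairing count, with the trivial case $|y|=1$ treated separately) is essentially a cousin of the paper's own argument, replacing the regularity condition by a counting step.
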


\begin{proof}
The statement is trivially true when $y = 0$.
And for $y \neq 0$, we first define $$I_0 = \begin{cases}[ \floor{y/4}, y/2] &\text{if $y\geq 0$} \\ [y/2, \ceil{y/4}] &\text{if $y \leq 0$} \end{cases}.$$
Note that $I_0$ is always non-empty, since if $|y| \in \{1,2,3\}$, then $0 \in I_0$. For all other $y$'s, $\floor{|y|/4}$ and $\floor{|y|/2}$ are at least one apart.
Then, first multiplying the left hand side of (\ref{eqn:e-baby}) by $\pua(y)$, we create two sums $A$ and $B$:
\begin{align*}
    \sum_{x \in G} |f(x) - f(x+y)|^2 \pua(y)
    \leq \frac{1}{|I_0|} \Bigg(
    &\underbrace{\sum_{z \in I_0} \sum_{x \in G} |f(x) - f(x+z)|^2\pua(y)}_A \\
  &+ \underbrace{\sum_{z \in I_0} \sum_{x \in G} |f(x+z) - f(x+y)|^2 \pua(y) }_B \Bigg).
\end{align*}

Define $$J_y = \begin{cases} [\floor{y/9}, y] &\text{if $y \geq 0$} \\ [y, \ceil{y/9}] &\text{if $y < 0$} \end{cases}.$$ By the regularity property (Definition \ref{defn:regularitycondi}) of $\pua$ and the fact that $\pua$ is symmetric and $I_0 \subseteq J_{y}$, we have
\begin{align*}
    &\pua(y) \leq \frac{1}{C_\pua} \pua(z) &\text{for all $y \in I_0$}
\end{align*}
Thus,
\[ A \leq \frac{1}{C_\pua} \sum_{x \in \Cn} \sum_{z \in I_0} |f(x) - f(x+z)|^2 \pua (z) \leq \frac{N}{C_\pua} \e_\pua(f,f).\]

Moreover, since $z \in I_0$, we have
$$ y-z \in \begin{cases}[ \floor{y/4}, y/2 ] &\text{if $y \geq 0$} \\ [y/2, \ceil{y/4}] &\text{if $y <0$}\end{cases} .$$
So
\[ \pua(y) \leq \frac{1}{C_\pua} \pua(y - z),\]
and
\[ B \leq \frac{1}{C_\pua} \sum_{z \in I_0} \sum_{x \in \Cn} |f(x+z) - f(x+y)|^2 \pua(y-z) \leq \frac{N}{C_\pua} \e_\pua(f,f). \]

Then combining what we computed, we have
\begin{align*}
  \frac 1 N \sum_{x\in \Cn} |f(x) - f(x+y)|^2 \leq \frac{2}{C_\pua \pua(y) \#I_0 }\e_\pua(f,f) \leq
    \frac{4 |y|^{1+\alpha}}{\#I_0 \cna C_\pua} \e_\pua (f,f).
\end{align*}
where the last inequality is by:
\[
    \begin{cases}
        (1+ |y|)^{1+\alpha} \leq 2^{1-\alpha} |y|^{1+\alpha}
        & \text{if } y \neq 0 \\
        (1+ |y|)^{1+\alpha} \leq |y|^{1+\alpha}
        &\text{if } y = 0
    \end{cases}.
\]

Then to count the number of elements in $I_0$, we see that when $|y| = 1$, $I_0$ has one element;
when $|y| = 2,3,4$, $I_0$ has 2 elements, and for $|y| \geq 8$, we have
\[ \#I_0 \geq \floor{\frac{|y|}{2} - \floor{|y|/4}} \geq \floor{|y|/4} \geq |y|/4 - 1 \geq |y|/8.\]
And for $4 < |y| <8$, we have that $|y|/4$ is one, and $|y|/2 > 2$, so $\#I_0 \geq 2$.
In all cases, the $\#I_0 \geq |y|/8$.

Therefore if we set, using previous bounds for $\cna$ and $C_\pua$,  $C(\alpha)$ to $2^{9+\alpha}3^{2+\alpha} (\alpha+1) /\alpha$, then
\[ \frac{1}{N} \sum_{x\in \Cn} |f(x) - f(x+y)|^2 \leq C(\alpha) |y|^\alpha \e_\mu(f,f),\]
which is what we were looking for in (\ref{eqn:e-baby}).
\end{proof}

By Theorem \ref{thm:poincare-circle-02}, Proposition \ref{prop:dirprops} (2), and the definition of $||\cdot||_{s,\alpha}$, we obtain the following theorem.

\begin{thm}
  Let $G$ be finite group, $s \in G$, and $\alpha \in (0,2)$. Then as defined in the end of Section \ref{para:notation},
  \begin{equation}
    \mual (g) =\sum_{\ell \in \Z/N_i \Z} \charf_{s_i^\ell}(g) p_i(\ell).
  \end{equation}
  There exists a constant $C(\alpha) > 0$ such that for all $r \geq 0$, $f \in \ell^2(\pi)$, and $y \in G$ where $||y||_{s,\alpha} \leq r$,
\[ \sum_{x \in G} |f(x) - f(xy)|^2 \pi(y) \leq C(\alpha) r \e_\mual (f,f),\]
where $C(\alpha)$ can be defined as $2^{9+\alpha}3^{2\alpha}(1+\alpha) /\alpha$.
\label{thm:embedded}
\end{thm}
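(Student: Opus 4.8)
The plan is to obtain this statement as a direct corollary of the cyclic-group pseudo-Poincar\'e inequality of Theorem \ref{thm:poincare-circle-02} together with the transfer principle of Proposition \ref{prop:dirprops} (2); no new ideas are needed, only a matching of hypotheses. First I would unwind the definitions. By the description at the end of Section \ref{para:notation}, $\mual$ is a probability measure on $G$ of the form $\mual(g) = \sum_{j \in \Z/N\Z} \charf_{s^j}(g)\, p_{N,\alpha}(j)$, where $N = N_i$ is the order of $s$ in $G$ and $p_{N,\alpha}$ is the power-law probability measure on $\Cn$ from (\ref{eqn:pua}); moreover the associated cost $\|\cdot\|_{s,\alpha}$ equals $|m|^\alpha$ when $g = s^m$ (with $|m|$ the minimal representative in $\Cn$) and $+\infty$ otherwise, since a word realizing this cost can only use the single letter $\s$. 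Thus $\mual$ and $\|\cdot\|_{s,\alpha}$ have exactly the form required in Proposition \ref{prop:dirprops} (2), taking $p = p_{N,\alpha}$ and the quasi-norm $\|\cdot\|_0$ on $\Cn$ given by $\|m\|_0 = |m|^\alpha$.

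Next I would check the input hypothesis of that proposition: one needs a function $a(r) \ge 0$ with
\[
\frac1N \sum_{x \in \Cn} |f(x) - f(x+y)|^2 \le a(r)\, \e_{p_{N,\alpha}}(f,f)
\qquad\text{whenever } \|y\|_0 = |y|^\alpha \le r .
\]
This is precisely Theorem \ref{thm:poincare-circle-02}, which gives $\frac1N \sum_{x}|f(x)-f(x+y)|^2 \le C(\alpha)\,|y|^\alpha\,\e_{p_{N,\alpha}}(f,f)\le C(\alpha)\,r\,\e_{p_{N,\alpha}}(f,f)$; so we may take $a(r) = C(\alpha)\,r$, which is linear in $r$.

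Feeding $a(r) = C(\alpha)\, r$ into Proposition \ref{prop:dirprops} (2) then produces, for every $f\colon G \to \R$ and every $y \in G$ with $\|y\|_{s,\alpha} \le r$,
\[
\frac1{|G|}\sum_{x \in G} |f(x) - f(xy)|^2 \le C(\alpha)\, r\, \e_{\mual}(f,f),
\]
and since $\pi$ is uniform this is exactly the asserted inequality $\sum_{x\in G}|f(x)-f(xy)|^2\,\pi(x) \le C(\alpha)\,r\,\e_{\mual}(f,f)$. Tracking the constant through Theorem \ref{thm:poincare-circle-02} (using $\alpha < 2$ to bound the relevant powers of $3$ and the crude estimate on $\cna^{-1}$) yields the stated value $C(\alpha) = 2^{9+\alpha}3^{2\alpha}(1+\alpha)/\alpha$.

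I do not anticipate any genuine obstacle: the entire statement is a bookkeeping consequence of two results already proved above. The only points deserving a line of care are (i) verifying that the cost $\|\cdot\|_{s,\alpha}$ really coincides with the abstract quasi-norm shape assumed in Proposition \ref{prop:dirprops} (2) — immediate once one notes that words projecting to an element of $\langle s\rangle$ may as well use only the letter $\s$ — and (ii) keeping the explicit constant consistent, which is harmless since any constant depending only on $\alpha$ is all that is used downstream.
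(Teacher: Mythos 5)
Your proposal is correct and is exactly the paper's argument: the paper derives Theorem \ref{thm:embedded} by combining Theorem \ref{thm:poincare-circle-02} with Proposition \ref{prop:dirprops} (2) and the definition of $\|\cdot\|_{s,\alpha}$, just as you do. The only caveat is the explicit constant, where the paper's statement ($3^{2\alpha}$) and the constant actually produced in the proof of Theorem \ref{thm:poincare-circle-02} ($3^{2+\alpha}$) differ by what appears to be a typo, so your ``tracking the constant'' step should simply carry the constant from that proof rather than force the stated expression.
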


At this point, it may be illustrative to use the above theorem to prove a pseudo-Poincar\'e inequality for finite abelian groups.
Let $G$ be a finite abelian group, $S$ be a $k$-tuple of generating elements of $G$, and $\sp \in (0,2)^k$.
Fix $r > 0$, $f: G \to \R$, $y \in G$ where $\cost{y} \leq r$.
Then $y$ can be written as $y = y_1 y_2 \dotsb y_k$ so that for all $i$, $||y_i||_{s_i, \alpha_i} \leq r$. So by Theorem \ref{thm:embedded}, for all $1 \leq i \leq k$,
\[\sum_{x \in G} |f(x) - f(xy_i)|^2 \pi(x) \leq C(\alpha_i) r \e_{\mu_{s_i,\alpha_i}} (f,f), \]
where $C(\alpha_i)$ is the number defined in Theorem \ref{thm:embedded}.
Using these inequalities we have
\begin{align*}
  \frac{1}{|G|} \sum_{x \in G} |f(x) - f(xy)|^2 & = \frac{1}{|G|} \sum_{x \in G} |f(x) - f(xy_1 \dotsb y_k)|^2 \\
  &\leq  k \sum_{i=1}^k \frac{1}{|G|} \sum_{x \in G} |f(x) - f(x y_i)|^2  \quad \text{(by Cauchy-Schwarz inequality)}\\
  &\leq C(\sp) r k \sum_{i=1}^k \e_{\mu_{s_i, \alpha_i}} (f,f) \quad (\text{using Proposition \ref{prop:dirprops}}) \\
  &= C(\sp) r k^2 \e_{\mus} (f,f),
\end{align*}
where $C(\sp) = \max_{1 \leq i \leq k} C(\alpha_i)$.

\definecolor{bleu}{RGB}{80,80,160}
\definecolor{verte}{RGB}{80,160,80}

\section{Algorithm for computing $\dia$ for cyclic groups}
\label{sect:algcycle}

The goal of this section is to give an algorithmic way to compute $\dia$ on the cyclic group $\Cn$ with $S = (1 , s)$
and $\sp = (\aI, \aII).$
We use this process to arrive at the examples outlined in Section \ref{sect:dia}.
For convenience, we will think of elements of $\Cn$ as integers in $\{0, \dotsc, n-1\}$, and fix $1 \leq s \leq N/2$.
We know that for all positive integers $0 < a \leq b/2$,  there exists positive integers $q$ and $r$ such that
\[
  b = q a - \ep r,
\]
such that $\ep \in \{\pm 1\}$ and $0 \leq r \leq a/2$.

Using this fact to modify the Euclidean algorithm, we can expand $N$ as follows:
\begin{align}
  r_{-1} = N    &= q_1 s    - \ep_1 r_1 \nonumber \\
  r_0 = s       &= q_2 r_1  - \ep_2 r_2 \nonumber \\
  r_1           &= q_3 r_2  - \ep_3 r_3 \nonumber \\
                &\;\; \vdots            \nonumber \\
  r_{i-1}       &= q_{i+1} r_i - \ep_{i+1} r_{i+1} \nonumber \\
                &\;\; \vdots           \nonumber \\
  r_{K-1}       &= q_{K+1} r_K - r_{K+1}
  \label{eqn:euclidalg}
\end{align}
where $r_{K+1}$ is the first $r_i$ that's equal to zero, so $r_K$ is equal to the greatest common divisor of $N$ and $s$.
The connection between this algorithm and continued fractions is well studied, see
Section 4.5.3 \cite{Knu1998:xiv+762}.

For $1 \leq i \leq K$, we choose $r_i$ and $\ep_i$ so that
\begin{align}
  r_{i-1} = q_{i+1} r_i - \ep_{i+1} r_{i+1} & \label{eqn:ri-recursion}\\
  r_{i+1} \leq r_i/2 & \qquad \quad \text{and} \label{eqn:ri-half} \\
  \text{if $r_{i+1} = r_i/2$, then $\ep_{i+1} = -1$.} \label{eqn:ri-epi} &
\end{align}
For each $i$, we can write $\ep_i r_i$ in terms of $N$ and $s$:
\[ m_i' N + \bar \ep_i r_i = m_i s, \]
for some $m_i, m_i'>0$, where $\bar \ep_i = \ep_1 \dotsb \ep_i$.
One should interpret this as ``using $m_i$ $s$-steps (positive ones only), one can reach $\bar \ep_i r_i$ by going around the circle $m_i'$ times.''

Using the expansion in (\ref{eqn:euclidalg}), we get
\[ C N + \bar \ep_{i+1} r_{i+1} = (q_{i+1} m_i - \ep_i m_{i-1})s. \]
Immediately, we see that $m_i$ satisfy the recurrence relation
\begin{align*}
  m_{i+1}   &= q_{i+1} m_i - \ep_i m_{i-1}
\end{align*}
with base cases $m_{-1} = 0$ and $m_0 = 1$.
The next few elements in the series are
\begin{align*}
m_1 &= q_1 &
m_2 &= q_1 q_2 - \ep_1 &
m_3 &= q_1 q_2 q_3 - \ep_1 q_3 - \ep_2 q_1
\end{align*}

As we will show, the sequence of $m_i$'s for $-1 \leq m_i \leq K$ is non-negative and strictly increasing.
In addition, $m_i$ is the smallest positive integer, $\ell$ such that $\ell s = \bar \ep_i r_i \mod N$.

\begin{thm}
  \[
    \dia \asymp \min_{-1 \leq i \leq K} \{ \max \{ r_i^\aI, m_{i+1}^\aII \} \}.
  \]
  In other words, there exist constants $c_1, c_2 > 0$ such that
  \[ c_1  \min_i \{ \max \{ r_i^\aI, m_{i+1}^\aII \} \}  \leq \dia \leq c_2  \min_i \{ \max \{ r_i^\aI, m_{i+1}^\aII \} \}. \]
  In particular we can set $c_1 = 1 / 2^{5(\aI + \aII)}$ and $c_2 = 1$.
  \label{thm:N1s-diameter}
\end{thm}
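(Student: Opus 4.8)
The plan is to recast the assertion as a covering problem on $\Cn$ and prove the two inequalities separately. On $\Cn$ with $S=(1,s)$, a word with total $\s_1$-count $a$ and $\s_2$-count $b$ projects to $a+bs\bmod N$, and since $\Cn$ is abelian no cancellation helps; hence
\[
  \cost{g}=\min\{\max(|a|^{\aI},|b|^{\aII}) : a+bs\equiv g \text{ in }\Cn\},
  \qquad \dia=\max_{g\in\Cn}\cost{g}.
\]
Writing $\mathrm{Box}(A,B)=\{a+bs : |a|\le A,\ |b|\le B\}\subseteq\Cn$, this says $B(e,D)=\mathrm{Box}(D^{1/\aI},D^{1/\aII})$ and $\dia=\min\{D : \mathrm{Box}(D^{1/\aI},D^{1/\aII})=\Cn\}$. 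Throughout I use the structural facts proved alongside the algorithm — that $(r_i)_{-1\le i\le K}$ is strictly decreasing with $r_{i+1}\le r_i/2$, that $(m_i)$ is strictly increasing, and that $m_i$ is the least positive integer with $m_i s\equiv\bar\ep_i r_i$ — together with the standard continued-fraction fact that $r_i m_{i+1}\asymp N$ with absolute constants.

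For the upper bound $\dia\le\min_i\max\{r_i^{\aI},m_{i+1}^{\aII}\}$ (with constant $1$) it suffices to show $\mathrm{Box}(r_i,m_{i+1})=\Cn$ for every $-1\le i\le K$: then taking $D=\max\{r_i^{\aI},m_{i+1}^{\aII}\}$ gives $\mathrm{Box}(D^{1/\aI},D^{1/\aII})\supseteq\mathrm{Box}(r_i,m_{i+1})=\Cn$, so $\dia\le D$ for each $i$. I would prove this by induction on $i$. The base case $i=-1$ is immediate, since $\mathrm{Box}(r_{-1},m_0)=\mathrm{Box}(N,1)=\Cn$. For the step, given $g=a+bs$ with $|a|\le r_{i-1}$, $|b|\le m_i$, use $r_i\equiv\bar\ep_i m_i s$ to divide: write $a=\lambda r_i+a'$ with $a'$ a balanced remainder ($|a'|\le r_i/2$) and $|\lambda|\le r_{i-1}/r_i+\tfrac12$, and replace $(a,b)$ by $(a',\,b+\lambda\bar\ep_i m_i)$; the relation $r_{i-1}=q_{i+1}r_i-\ep_{i+1}r_{i+1}$ bounds $|\lambda|$ by $q_{i+1}$, and the recursion $m_{i+1}=q_{i+1}m_i-\ep_i m_{i-1}$ is precisely what keeps the new second coordinate within $m_{i+1}$. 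Obtaining the clean constant $1$ requires bookkeeping of the signs $\ep_i$ so that the remainders are genuinely balanced and the quotients line up; a crude version of the same argument yields a constant $C(\sp)$, which is already enough for the $\asymp$.

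For the lower bound I would show $\mathrm{Box}(D^{1/\aI},D^{1/\aII})\ne\Cn$ whenever $D<c_1 D_0$, where $D_0=\min_i\max\{r_i^{\aI},m_{i+1}^{\aII}\}$ and $c_1=2^{-5(\aI+\aII)}$; this yields $\dia\ge c_1 D_0$. The first observation is that $\mathrm{Box}(A,B)=\Cn$ iff the largest gap of the point set $\{bs : |b|\le B\}$ on the circle $\Cn$ is at most $2A$. When $D$ (hence $A=D^{1/\aI}$, $B=D^{1/\aII}$) is so small that the counting bound $|\mathrm{Box}(A,B)|\le(2A+1)(2B+1)<N$ applies, this already gives $\mathrm{Box}(A,B)\ne\Cn$, handling the regime $D_0\lesssim N^{\aI\aII/(\aI+\aII)}$; the interesting regime is when $D_0$ overshoots this, which by the correspondence with continued fractions happens exactly when some partial quotient straddles the crossover. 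There I would invoke the three-distance theorem in the form that for $B<m_{j+2}$ the set $\{bs : |b|\le B\}$ has a gap comparable to $r_j$ (largest when $B$ is near $m_{j+1}$), so that $A$ must be comparably large for the box to cover; choosing $j$ from the location of $B=D^{1/\aII}$ among the $m_i$'s, and using the monotonicity of $(r_i)$, $(m_i)$ together with the quasi-convex profile of $i\mapsto\max\{r_i^{\aI},m_{i+1}^{\aII}\}$ (a pointwise maximum of a decreasing and an increasing sequence), one forces $\dia\gtrsim\max\{r_j^{\aI},m_{j+1}^{\aII}\}\ge D_0$. The explicit constant $c_1=2^{-5(\aI+\aII)}$ is where all the lost powers of $2$ are collected: from the gap estimate, from $r_i m_{i+1}\asymp N$, and from shifting indices at the crossover.

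I expect the main obstacle to be the lower bound, concretely two linked points: (i) the sharp three-distance/best-approximation estimates for $\{bs\bmod N : |b|\le B\}$ — that its largest gap is $\asymp r_i$ at the appropriate scale — have to be extracted from the \emph{modified} Euclidean algorithm with its sign sequence $\ep_i$, and since the $m_i$ form a sub-sequence of the ordinary convergent denominators one must reconcile the modified remainders with the classical inequalities; and (ii) the index bookkeeping that makes the resulting bound land on a term of the exact shape $\max\{r_i^{\aI},m_{i+1}^{\aII}\}$ — naively one lands on a neighbouring expression such as $\max\{r_i^{\aI},m_i^{\aII}\}$, which can be much smaller, so a careful case analysis on where $D^{1/\aII}$ sits relative to the $m_i$'s is needed. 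By contrast the upper bound, once the basic properties of the pair $(r_i,m_i)$ are in hand, is routine.
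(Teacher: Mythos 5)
Your reformulation of $\cost{\cdot}$ on $\Cn$ as a box--covering problem is sound, and your upper bound is essentially the paper's: the paper likewise covers the circle by the points $ks$ with $|k|\le m_{i+1}$, whose gaps are at most $\sim r_i$, so that $\cost{x}\le\max\{r_i^{\aI},m_{i+1}^{\aII}\}$ for every $i$. (Note, though, that your induction as sketched lets the second coordinate grow to about $(q_{i+1}+1)m_i$ rather than $m_{i+1}$, so it only yields $c_2=C(\sp)$; the theorem's claim $c_2=1$ is not delivered, as you concede. This is a minor defect compared with what follows.)

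The genuine gap is the lower bound, and it is exactly the step you flag but do not carry out. Your argument requires a quantitative three-distance/best-approximation statement for the \emph{sign-modified} Euclidean algorithm: for $m_{j+1}\le B<m_{j+2}$ the largest gap of $\{bs:|b|\le B\}$ is not ``comparable to $r_j$'' uniformly --- near $B\approx m_{j+2}$ it is only $\sim r_{j+1}$, which can be far smaller when $q_{j+2}$ is large --- and, as you yourself note, the crude inference lands on the mismatched quantity $\max\{r_{j+1}^{\aI},m_{j+1}^{\aII}\}$, which can be well below $\min_i\max\{r_i^{\aI},m_{i+1}^{\aII}\}$. What is needed is the finer estimate (gap at least roughly $r_j-(B/m_{j+1})r_{j+1}$, and always at least $\sim r_{j+1}$) together with a two-case analysis (either $A\gtrsim r_j$, forcing $D\gtrsim\max\{r_j^{\aI},m_{j+1}^{\aII}\}$, or $B\gtrsim m_{j+2}$, forcing $D\gtrsim\max\{r_{j+1}^{\aI},m_{j+2}^{\aII}\}$); you gesture at this but prove none of it, and even the ``standard'' input $r_im_{i+1}\asymp N$ must be re-derived here because the $m_i$ come from the $\ep_i$-signed recursion, not the classical one. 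This deferred material is precisely where the paper does its work: Proposition \ref{prop:algcorrect} establishes, by a delicate induction tracking the sign sequence (hypotheses (a)--(c), in particular the interval statements $x\in I_i\Rightarrow\lp{x}>m_i$), that $m_i$ is the exact number of large steps needed for $\pm r_i$ and that nothing else nearby is reachable more cheaply; Lemma \ref{lem:catchall} supplies the arithmetic comparisons; and the index bookkeeping you identify as problematic is resolved not by a covering criterion but by exhibiting explicit witnesses $x_i$ (equal to $\lfloor q_{i+1}/2\rfloor r_i$ or $r_i$) with $\cost{x_i}\ge 2^{-5(\aI+\aII)}\min\{r_{i-1}^{\aI},m_{i+1}^{\aII}\}$ (Proposition \ref{lem:lowerbd-most}) and a two-case argument at the argmin index. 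Since your proposal neither proves the modified-algorithm gap estimates nor supplies the case analysis converting them into a bound by $\min_i\max\{r_i^{\aI},m_{i+1}^{\aII}\}$, the substantive half of Theorem \ref{thm:N1s-diameter} remains unproved as written, although the covering route you outline could in principle be completed into a genuine alternative to the paper's witness-point argument.
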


First, we present some simple corollaries.

\begin{cor}
\begin{enumerate}
  \item Let $N = s t$, where $s, t > 0$, $G = \Cn$, $S = (1,s)$, and $\sp = (\aI, \aII)$. Then,
    \[ \dia \asymp \min \left\{ N^\aI, \max \{ s^\aI, t^\aII \} \right\}. \]
  \item   Suppose $s^\aI \leq (N/s)^\aII$.
    Let $N = s t$, where $s, t > 0$, $G = \Cn$, $S = (1,s)$, and $\sp = (\aI, \aII)$.
    Then,
    \[ \dia \asymp \min\{ N^\aI, (N/s)^\aII \}. \]
  \item Let $N = s t_1 + s_2$, where $0 \leq s_2 \leq s/2$ and $s = s_2 t_2$, $G = \Cn$, $S = (1,s)$, and $\sp = (\aI, \aII)$.
    Then,
    \[ \dia \asymp \min\{ N^\aI, \max\{s^\aI, t_1^\aII\}, \max\{(s_2^\aI, (t_1t_2)^\aII\} \}. \]
  \item Let
    \begin{align*}
      N   &= s   t_1 + s_2 \\
      s   &= s_2 t_2 + s_3 \\
      s_2 &= s_3 t_3,
    \end{align*}
    where $0 < s$, $0 < s_2 \leq s/2$, and $0 < s_3 \leq s_2/2$,
    $G = \Cn$, $S = (1,s)$, and $\sp = (\aI, \aII)$.
    Then
    \[ \dia \asymp \min\{ N^\aI, \max\{s^\aI, t_1^\aII\}, \max\{(s_2^\aI, (t_1t_2)^\aII\},
      \max\{ s_3^\aI, (t_1 t_2 t_3)^\aII\}
      \}. \]
\end{enumerate}
\label{ex:diam-alg}
\end{cor}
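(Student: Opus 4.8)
\section*{Proof proposal for Corollary \ref{ex:diam-alg}}

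The plan is to obtain all four parts as immediate consequences of Theorem \ref{thm:N1s-diameter}: in each case I would run the modified Euclidean expansion (\ref{eqn:euclidalg}) for the prescribed $N$ and $s$, read off the quotients $q_i$ and signs $\ep_i$, compute the $m_i$ from their recurrence, and substitute into the formula $\dia \asymp \min_{-1\le i\le K}\max\{r_i^{\aI}, m_{i+1}^{\aII}\}$. The first thing to verify in each part is that the stated arithmetic hypotheses really do force the expansion the algorithm selects under conditions (\ref{eqn:ri-recursion})--(\ref{eqn:ri-epi}). For part (1), $N = st$ means $s$ divides $N$, so the expansion terminates at once: $q_1 = t$, $r_1 = 0$, hence $K = 0$; here one should also assume $t\ge 2$, i.e.\ $s\le N/2$, the case $t=1$ being trivial. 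For part (3), $0\le s_2\le s/2$ forces $N = t_1 s + s_2$, matching (\ref{eqn:euclidalg}) with $q_1 = t_1$, $\ep_1 = -1$, $r_1 = s_2$, and $s = s_2 t_2$ gives $r_2 = 0$, so $K = 1$; part (4) is the analogous three-step run with $q_j = t_j$, $\ep_j = -1$ for $j = 1,2,3$ and $K = 2$.

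Next I would evaluate the $m_i$ using $m_{i+1} = q_{i+1}m_i - \ep_i m_{i-1}$ with $m_{-1}=0$, $m_0 = 1$, which gives $m_1 = q_1$, $m_2 = q_1 q_2 + 1$ and $m_3 = q_1 q_2 q_3 + q_3 + q_1$ in the relevant ranges. In each of these cases every $q_j$ is at least $2$ (this follows from the size constraints: $s\le N/2$ forces $t_1\ge 2$, $s_2\le s/2$ forces $t_2\ge 2$, and $s_3\le s_2/2$ forces $t_3\ge 2$), so the additive corrections are dominated by the leading product and $m_1 = t_1$, $m_2\asymp t_1 t_2$, $m_3\asymp t_1 t_2 t_3$ with absolute comparison constants. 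Plugging these in, together with the fact that the $i=-1$ term always equals $\max\{N^{\aI},1\} = N^{\aI}$, yields exactly the displayed expressions. Part (2) then follows from part (1) by a one-line simplification: the hypothesis $s^{\aI}\le (N/s)^{\aII}$ makes $\max\{s^{\aI}, t^{\aII}\} = t^{\aII} = (N/s)^{\aII}$, whence $\dia\asymp\min\{N^{\aI},(N/s)^{\aII}\}$.

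There is no new inequality to establish here, so the ``hard part'' is purely organizational: correctly matching the sign and size conventions of (\ref{eqn:ri-half})--(\ref{eqn:ri-epi}) to each factorization, and checking that the lower-order terms in $m_2$ and $m_3$ do not affect the comparison, so that the $\asymp$ in Theorem \ref{thm:N1s-diameter} is inherited (the constants there already being allowed to depend on $\aI,\aII$, with $k=2$ and $\ell=1$). I would present parts (3) and (4) in full and remark that (1) and (2) are the degenerate and specialized cases of the same computation.
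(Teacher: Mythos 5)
Your proposal is correct and matches the paper's intent exactly: the paper states these as immediate consequences of Theorem \ref{thm:N1s-diameter} without further proof, and your instantiation of the expansion (\ref{eqn:euclidalg}) with $q_j=t_j$, $\ep_j=-1$, the recurrence for the $m_i$, and the absorption of the lower-order terms in $m_2,m_3$ is precisely the intended computation. Your extra care about the edge cases ($t\ge 2$, $s\le N/2$, the sign convention when $s_2=s/2$) goes slightly beyond what the paper records but introduces no divergence in method.
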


\begin{proof}[Proof of the upper bound in Theorem \ref{thm:N1s-diameter}]
It suffices to show that for all $x \in \Cn$,
\[ \cost{x} \leq \min_i \{ \max \{ r_i^\aI, m_{i+1}^\aII \} \}. \]
Fix $i$. For all $x \in \Cn$, $|ks - x| < r_{i}$ for some $0 \leq k \leq m_{i+1}$.
Then $x = k s + r$ for some $|r| \leq r_i$ and $|k| \leq m_{i+1}$. Therefore,
$\cost{x} \leq  \max \{ r_i^\aI, m_{i+1}^\aII \} ,$
for all $i$. Taking the minimum over all $i$, we achieve the desired result.
\end{proof}

Our proof of the lower bound is much more involved, and will use the following definition and proposition
\begin{defn}
  Fix positive integers $N$ and $s$ with $0 < s \leq N/2$, define
  \[ [x]_s = \argmin_{\ell \in \Z} \{ |\ell| : \ell s \equiv x \mod N \},\]
  and if there are two options, choose the positive one.
\end{defn}

\begin{figure}
\begin{tikzpicture} [scale=1.8,
		     every node/.style={circle,inner sep=0pt,minimum size=4pt,outer sep=5pt},]

\draw (0,0) circle [radius=2cm];

\foreach \xi in {0,1,...,23}
{
  \def\angle{90-\xi*360/47}
  \draw[thin,line width=1pt] (\angle:1.9cm) -- (\angle:2.1cm);
  \node[font=\Tiny] at (\angle:1.75cm) {\textsf{\xi}};
}

\foreach \xi in {-1,-2,...,-23}
{
  \def\angle{90-\xi*360/47}
  \draw[thin,line width=1pt] (\angle:1.9cm) -- (\angle:2.1cm);
  \node[font=\Tiny] at (\angle:1.75cm) {\textsf{\xi}};
}

\node[font=\tiny] (s0) at (90:2.3cm) {\textsf{0}};

\def\s{360*17/47}

\foreach \xi in {1,-1,2,-2}
{
  \node[fill,yellow] (f\xi) at (90-\xi*\s:2cm) {};
  \node[font=\tiny] (s\xi) at (90-\xi*\s:2.3cm) {\textsf{\xi}};
  \ifthenelse{\xi=1}{\node[red,rectangle,draw,rotate fit=90-\xi*\s,fit=(f\xi)(s\xi),label={0:$s$}] {};}{}
}

\foreach \xi in {3,4,...,10,-3,-4,...,-10}
{
  \node[fill,red] (f\xi) at (90-\s*\xi:2cm) {};
  \node[font=\tiny] (s\xi) at (90-\s*\xi:2.3cm) {\textsf{\xi}};
  \ifthenelse{\xi=3}{\node[red,rectangle,draw,rotate fit=90-\s*\xi,fit=(f\xi)(s\xi),label={0:$r_1$}] {};}{}
}

\foreach \xi in {11,12,...,23,-11,-12,...,-23}
{
  \node[fill,gray] (f\xi) at (90-\s*\xi:2cm) {};
  \node[font=\tiny] (s\xi) at (90-\s*\xi:2.3cm) {\textsf{\xi}};
  \ifthenelse{\xi=11}{\node[red,rectangle,draw,rotate fit=90-\s*\xi,fit=(f\xi)(s\xi),label={0:$r_2$}] {};}{}
}

\end{tikzpicture}
\caption{
  We visualize $\Z/47 \Z$, whose elements are labeled inside the circles. With $s = 17$, we label $[\cdot]_s$ for each element on the outside of the circle. Following the positive $s$-steps, we can see that $r_1$ is the first time that the path has visited $(-s/2,s/2)$, at which time the path turn from yellow to red. Similarly, at the 11$^{th}$ $s$-step, we visit $-1$ and it is the first time we visit the interval $(-r_1/2,r_1/2)$.
 }
\end{figure}

\begin{prop}
  For all $0 \leq i \leq K$, $| [r_i]_s |= m_i.$
  \label{prop:sprop}
\end{prop}

First we give some properties of $[\cdot]_s$ in the following lemma:

\begin{lem}
  \begin{enumerate}
    \item For any $x \in \Z/N\Z$, represented as $0 < x \leq N/2$, if $[x]_s = -[x]_s$, then $x$ divides $N$.
    \item For all $i < K$, $[-r_i]_s = - [r_i]_s$.
    \item Let $x, y \in \Z / N \Z$, where $[x]_s$ and $[y]_s$ are positive.
      If $[x + y]_s \geq \min ([x]_s, [y]_s)$, then  \[ [x + y]_s = [x]_s + [y]_s. \]
    \item Let $x, y \in \Z / N \Z$, where $[x]_s$ and $[y]_s$ are positive.
      If $[x]_s > [y]_s > 0$, then $[x - y]_s = [x]_s - [y]_s$.
  \end{enumerate}
  \label{lem:ssteps}
\end{lem}

\begin{proof}
  \begin{enumerate}
    \item By definition, $[-x]_s$ is either $-[x]_s$ or $[x]_s$. If the former is true, then we are done.
      If the latter, then let $\ell = [x]_s = [-x]_s$, and then we have
      \begin{align*}
        \ell s &\equiv -x\mod N \\
        \ell s &\equiv x\mod N .
      \end{align*}
      Thus, $2 x \equiv 0 \mod N$.
    \item Follows directly from (1).
    \item Let $a = [x]_s$ and $b = [y]_s$. Without loss of generality, we can assume that $a > b$.
      We know that $[x + y]_s \leq a+b$ by definition.
      Thus suppose that $0 < [x + y]_s < a+b$ would imply that $0 < [x + y]_s - b < a$, which contradicts the assumption that $[x]_s = a$.
    \item
      We know that $[x - y]_s \leq a-$ by definition.
      If $[x - y]_s < a-b$, then it would be true that $0 < [x - y]_s + b < a$, which contradicts the assumption that $[x]_s = a$.

  \end{enumerate}
\end{proof}

\begin{proof}[Proof of Proposition \ref{prop:sprop}]
  We prove by induction with the following induction hypotheses:
  \begin{enumerate}[label=(\alph*)]
    \item $[\bar \ep_i r_i]_s = m_i$ (note in particular, this means that $[\bar \ep_i r_i]_s$ is positive), where $\bar \ep_i = \ep_1 \cdots \ep_i$.
    \item For all $x \in S_i = (- 2r_i - r_{i-1}, 2r_i + r_{i-1}) \setminus \{\pm r_{i-1}, 0 \}$, $|[x]_s| > m_i$.
  \end{enumerate}

  The base cases are for $i = 0$ and $1$, i.e. $r_i$ being $s$ and $r_1$, which are trivial for both hypotheses.

  For the induction step, we first consider when $i < K-1$, and case-split based on the signs of $- r_{i-1}, -r_i, r_i,$ and $r_{i-1}$:

  If the signs are $(-,-,+,+)$, then we know that $\bar \ep_{i-1} = \ep_i = 1$.
  Then by Lemma \ref{lem:ssteps} (3)  and the induction hypothesis, we can deduce that $[r_i]_s = m_i$, $[2r_i]_s = 2m_i$, $\dots$, and $[q_{i+1}r_i]_s = q_{i+1} m_i$.
      Thus,
      \begin{align*}
        [\ep_{i+1} r_{i+1}]_s = [\bar \ep_{i+1} r_{i+1}]_s &= [q_{i+1}r_i - r_{i-1}]_s \\
        & = [q_{i+1} r_i]_s - [r_{i-1}]_s \qquad \text{(Lemma \ref{lem:ssteps} (4))} \\
        & = q_{i+1} m_i - m_{i-1} = m_{i+1}.
       \end{align*}

      For induction hypothesis (b), note that it suffices to show this for points with positive $[\cdot]_s$, by Lemma \ref{lem:ssteps} (1).
      Starting from $-r_{i-1}$, consider the path along positive $s$-steps.
      By the induction hypothesis, the next visit to the set $S_i$ is at $r_i - r_{i-1}$, and the next at $2 r_i - r_{i-1}$, and so on.
      Therefore, the first visit to $S_{i+1}$ is at the point $q_{i+1} r_i - r_{i-1}$, by the definition of $q_{i+1}$.
      This point is $\ep_{i+1} r_{i+1}$. Therefore, for all other points in $S_i$, specifically the ones in $S_{i-1}$, have $[\cdot]_s$ greater than $m_{i+1}$, if it is realized by a positive $s$-path.

    If the signs are $(-,+,-,+)$, then we know that $\bar \ep_{i-1} = 1$ and  $\ep_i = -1$.
      As in the previous case, by Lemma \ref{lem:ssteps} (2) and the induction hypothesis, $[q_{i+1}r_i]_s = - q_{i+1} m_i$, and
      \[ [\bar \ep_{i+1} r_{i+1}]_s = - [\ep_{i+1} r_{i+1}]_s = - [q_{i+1}r_i - r_{i-1}]_s = q_{i+1} m_i + m_{i-1} = m_{i+1}.\]

      We follow the format of the previous case, instead starting at $r_{i-1}$, considering again positive $s$-steps.
      In this case, the next visits to $S_i$ are $r_{i-1} - r_i$, $r_{i-1} - 2 r_i$, $\dotsc$, and $r_{i-1} - q_{i+1} r_i$, which is $\bar \ep_{i+1} r_{i+1}$.
    The last two cases are the same as the cases above with the signs switched.

    Remaining are the cases when $i = K$. The possible sign combinations for $- r_{K-1}, -r_K, r_K,$ and $r_{K-1}$
    are $(-, +, +, +)$ and $(+, +, +, -)$. Then the argument proceeds exactly the same as above.
    \qedhere
\end{proof}

\begin{lem}
  \begin{enumerate}
    \item \label{item:ratio}
    Let $r > 0$ and $x \geq n$, where $n$ is a positive integer. Then
    \[\floor{rx} \geq \frac{\floor{rn} - 1}{\floor{rn}} rx
    \qquad \text{and} \qquad
    \ceil{rx} \leq \frac{\ceil{rn}}{\ceil{rn}-1} rx.
    \]
    Note that the first inequality is only of interest of when $rn > 1$ and the second inequality when $rn > 2$.
    \item \label{item:qi}
      For all $1 \leq i \leq K+1$, $q_i \geq 2$.
    \item \label{item:incr} The $m_i$'s are strictly increasing.
    \item \label{item:qm} Let $2 \leq i \leq K$. Then \[ q_{i+1} m_i \geq \frac{m_{i+1}}{4}. \]
    \item \label{item:qr} Let $i \geq 1$. Then \[ q_{i+1} r_i \geq \frac{3 r_{i-1}}{4}. \]
  \end{enumerate}
  \label{lem:catchall}
\end{lem}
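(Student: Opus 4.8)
The plan is to treat part (1) as a self-contained real-variable estimate and parts (2)--(5) as elementary consequences of the defining recursions \eqref{eqn:ri-recursion}--\eqref{eqn:ri-epi} for the $r_i$ and of $m_{i+1} = q_{i+1}m_i - \ep_i m_{i-1}$ (with $m_{-1}=0$, $m_0=1$, $m_1=q_1$). Nothing here needs more than the trivial bounds $\floor{y} > y-1$, $\ceil{y} < y+1$, and the halving property $r_{i+1}\le r_i/2$.

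For part (1), I would start from $\floor{rx} > rx - 1$ and note that the claimed inequality $\floor{rx} \ge \frac{\floor{rn}-1}{\floor{rn}}\, rx$ follows as soon as $rx - 1 \ge \frac{\floor{rn}-1}{\floor{rn}}\, rx$, which rearranges to $rx \ge \floor{rn}$; and this holds because $x \ge n$ forces $rx \ge rn \ge \floor{rn}$. The ceiling statement is the mirror image: from $\ceil{rx} < rx + 1$ it suffices that $rx + 1 \le \frac{\ceil{rn}}{\ceil{rn}-1}\, rx$, i.e. $rx \ge \ceil{rn} - 1$, and again $rx \ge rn > \ceil{rn} - 1$. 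Here one restricts to $\ceil{rn}\ge 2$, which is the only range in which the stated bound is of interest and the denominator is nonzero.

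For parts (2) and (5) I would argue together. The construction gives $r_i \le r_{i-1}/2$ for every $i$ with $1\le i\le K+1$: for $i\ge 2$ this is \eqref{eqn:ri-half} with a shifted index, and for $i=1$ it is $r_1 \le s/2 = r_0/2$; at the top, $N = q_1 s - \ep_1 r_1$ together with $s \le N/2$ gives $q_1 s = N + \ep_1 r_1 \ge N - s/2 \ge \tfrac32 s$, so $q_1 \ge 2$. For the general step, $r_{i-1} = q_{i+1} r_i - \ep_{i+1} r_{i+1}$ with $\ep_{i+1}\in\{\pm1\}$ and $r_{i+1} \le r_i/2 \le r_{i-1}/4$ yields
\[
  q_{i+1} r_i = r_{i-1} + \ep_{i+1} r_{i+1} \ge r_{i-1} - r_{i+1} \ge r_{i-1} - \tfrac14 r_{i-1} = \tfrac34 r_{i-1},
\]
which is exactly (5); combining this with $r_{i-1}\ge 2r_i$ also gives $q_{i+1} r_i \ge \tfrac32 r_i$, hence $q_{i+1}\ge 2$, proving (2).

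For part (3) I would induct on $i$ using (2): the base cases are $m_{-1}=0<1=m_0$ and $m_0=1<q_1=m_1$, and, assuming $m_{i-1}<m_i$, the recursion together with $q_{i+1}\ge 2$, $\ep_i\le 1$ and $m_{i-1}\ge 0$ gives $m_{i+1} = q_{i+1}m_i - \ep_i m_{i-1} \ge 2m_i - m_{i-1} > m_i$. Finally, for part (4), the recursion gives $m_{i+1} \le q_{i+1}m_i + m_{i-1}$, and by (3) $m_{i-1}\le m_i$ while by (2) $m_i \le q_{i+1}m_i$; hence $m_{i+1} \le 2q_{i+1}m_i$, i.e. $q_{i+1}m_i \ge m_{i+1}/2 \ge m_{i+1}/4$. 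The only real work is bookkeeping — keeping the index ranges straight (the special status of $r_{-1}=N$, $r_0=s$ and of $m_{-1},m_0,m_1$) and carrying the sign $\ep_{i+1}$ through the worst case — so I do not anticipate any genuine analytic obstacle.
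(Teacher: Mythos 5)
Your proof is correct and follows essentially the same route as the paper: elementary induction on the recursions for $r_i$ and $m_i$ combined with the halving property $r_{i+1} \le r_i/2$ and the integrality of the $q_i$. The only differences are cosmetic — you supply the short floor/ceiling computation for part (1), which the paper omits, you deduce (2) from (5) rather than directly, and your one-line bound $m_{i+1} \le q_{i+1}m_i + m_{i-1} \le 2\,q_{i+1}m_i$ for part (4) streamlines the paper's case analysis (and even yields the constant $1/2$ in place of $1/4$).
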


\begin{proof}
  \begin{enumerate}
   \item The proof is simple and we omit it here.
  \item Fix $1 \leq i \leq K+1$.
  \[ r_{i-2} + \ep_i r_i = q_i r_{i-1}. \]
  The algorithm requires that for each $i\geq0$, $r_{i+1} \leq r_i /2$.
  For each $i \geq 1$ we know that $r_{i-2} \geq 2 r_{i-1}$ and $r_{i-1} \leq r_{i} /2$, so
  \[ q_{i} r_{i-1} = r_{i-2} + \ep_{i} r_{i} \geq 2 r_{i-1} - r_{i-1} /2 = (3/2) r_{i-1} \]
  So $q_i \geq 3/2$, and since the $q_i$'s must be positive integers, $q_i \geq 2$.

  \item  From our inductive definition we have for the base case
    \[ m_2 = q_1 q_2 - \ep_1 \geq q_1 (2 - 1/2) \geq (3/2) q_1 > m_1 . \]
  and for the inductive case,
  \begin{align*}
    m_{i+1} &= q_{i+1} m_i - \ep_i m_{i-1} \\
    &\geq 2 m_i - m_{i-1} \qquad \text{(Lemma \ref{lem:catchall} (\ref{item:qi}))} \\
    &> m_i \qquad \text{(induction hypothesis)}.
  \end{align*}

  \item If $i = 1$, we have
  \[ q_{2} m_1 = q_2 q_1 \geq \frac34 (q_2 q_1 - 1) \geq \frac34 (q_2 q_1 - \ep_1) = \frac 34 m_2. \]
  If $i > 1$, then $q_{i+1} m_i = m_{i+1} - \ep_i m_{i-1}$.
  If $m_{i+1}/m_{i-1} \leq 2$, then since $m_i$'s are increasing by Lemma \ref{lem:catchall} (\ref{item:incr}), we know $m_{i+1} \leq 2 m_i$, as well.
  So, \[ q_{i+1} m_{i} \geq q_{i+1} \frac{m_{i+1}}{2} \geq m_{i+1}, \]
  with the last inequality following from $q_{i+1} \geq 2$, Lemma \ref{lem:catchall} (\ref{item:qi}).
  If $m_{i+1}/m_{i-1} \geq 2$,
  \[q_{i+1} m_i \geq m_{i+1} - m_{i-1} \geq \frac{1}{2} m_{i+1}.\]

    \item $r_{i-1} = q_{i+1} r_i - \ep_i r_{i+1}
    \leq q_{i+1} r_i + r_{i+1}
    \leq q_{i+1} r_i + \frac{r_{i-1}}{4}$ \qedhere
  \end{enumerate}
\end{proof}

\begin{prop}
Let $0 \leq i \leq K$. Define
\begin{equation}
  x_i =
  \begin{cases}
    \floor{q_{i+1}/2} r_i &\text{ if } q_{i+1} \geq 8 \\
    r_i                   &\text{ if } q_{i+1} < 8
  \end{cases}.
\end{equation}
Then,
  \[ \cost{x_i} \geq \frac{1}{2^{5 (\aI + \aII)}} \min \{ r_{i-1}^\aI, m_{i+1}^\aII \}.\]
\label{lem:lowerbd-most}
\end{prop}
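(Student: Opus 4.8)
The plan is to work with the $s$-expansion directly. Since $S=(1,s)$, a word $w$ with $\rho(w)=x$ is, up to free reduction, a sequence of $\s_1^{\pm1}$'s and $\s_2^{\pm1}$'s whose net $\s_2$-exponent $\ell$ and net $\s_1$-exponent $a$ satisfy $a+\ell s\equiv x\pmod N$, and one may always realise such a word using exactly $|\ell|$ letters $\s_2^{\pm1}$ and $|a|$ letters $\s_1^{\pm1}$. Optimising over $a$ for each fixed $\ell$ therefore gives
\[
  \cost{x}=\min_{\ell\in\Z}\ \max\bigl\{\,|x-\ell s|^{\aI},\ |\ell|^{\aII}\,\bigr\},
\]
where $|\cdot|$ denotes distance to $0$ on $\Cn$. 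So it suffices to check, for each $\ell\in\Z$, that $\max\{|x_i-\ell s|^{\aI},|\ell|^{\aII}\}\ge 2^{-5(\aI+\aII)}\min\{r_{i-1}^{\aI},m_{i+1}^{\aII}\}$. If $|\ell|\ge m_{i+1}/32$ this is clear, because then $|\ell|^{\aII}\ge 2^{-5\aII}m_{i+1}^{\aII}\ge 2^{-5(\aI+\aII)}m_{i+1}^{\aII}$. Hence I may assume $|\ell|<m_{i+1}/32$, and it remains to prove $|x_i-\ell s|\ge r_{i-1}/32$ (which yields $|x_i-\ell s|^{\aI}\ge 2^{-5\aI}r_{i-1}^{\aI}\ge 2^{-5(\aI+\aII)}r_{i-1}^{\aI}$).

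For the last inequality I would argue contrapositively: if $|x_i-\ell s|<r_{i-1}/32$, then $y:=(\ell s\bmod N)$ lies within $r_{i-1}/32$ of $x_i$ on the circle, while $y\in\langle s\rangle$ has $|y|_s\le|\ell|<m_{i+1}/32$ (with $|y|_s$ as in Proposition \ref{prop:algcorrect}; if $\ell=0$ then $y=0$, which is impossible since $|x_i|>r_{i-1}/32$ by the estimates below). So it is enough to prove the claim: \emph{every $y\in\langle s\rangle$ with $|y-x_i|<r_{i-1}/32$ has $|y|_s>m_{i+1}/32$.} This is exactly what the case split in the definition of $x_i$ is designed to make true. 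If $q_{i+1}<8$: from $r_{i-1}=q_{i+1}r_i-\ep_{i+1}r_{i+1}<8r_i$ and $m_{i+1}<(q_{i+1}+1)m_i\le 8m_i$ one gets $x_i=r_i$, the ball of radius $r_{i-1}/32<r_i/4$ about $r_i$ lies inside $(3r_i/4,5r_i/4)$, its only ``lattice point'' $jr_i$ is $r_i$ itself (with $|r_i|_s=m_i$ by Proposition \ref{prop:algcorrect} and $m_i>m_{i+1}/8>m_{i+1}/32$), and every other $y$ in it has $|y|_s>m_i$ by the interval estimates inside the proof of Proposition \ref{prop:algcorrect}. If $q_{i+1}\ge 8$: Lemma \ref{lem:catchall} gives $q_{i+1}r_i\in[\tfrac34 r_{i-1},\tfrac54 r_{i-1}]$, so $x_i=\floor{q_{i+1}/2}r_i\in[\tfrac{21}{64}r_{i-1},\tfrac58 r_{i-1}]$ and its $r_{i-1}/32$-neighbourhood lies strictly inside $(0,r_{i-1})$ without wrapping around $N$. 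Each lattice point $jr_i$ in this neighbourhood has $|j-\floor{q_{i+1}/2}|<q_{i+1}/24$, hence index $j$ at least a fixed positive fraction of $q_{i+1}$; since $|jr_i|_s=jm_i$ for $1\le j\le q_{i+1}$ (iterating Lemma \ref{lem:mi-iter} as in the proof of Proposition \ref{prop:algcorrect}) and $q_{i+1}m_i\ge m_{i+1}/4$ (Lemma \ref{lem:catchall}), $|jr_i|_s$ is at least a fixed fraction of $m_{i+1}$, and chasing constants this exceeds $m_{i+1}/32$; every non-lattice $y$ in the same cell has strictly larger $|y|_s$ by the interval estimates. This proves the claim, and hence the proposition.

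I expect the geometric claim of the second paragraph to be the main obstacle: one must confirm that the $r_{i-1}/32$-ball around $x_i$ contains no point of $\langle s\rangle$ of small $s$-length, which is precisely why $x_i$ is defined with the threshold $q_{i+1}=8$ — for $q_{i+1}\ge 8$ there is room to place $\floor{q_{i+1}/2}r_i$ near the midpoint of $[0,r_{i-1}]$, far from the nearest short points $0$ and $r_{i-1}$, whereas for $q_{i+1}<8$ there is no such room but then $r_i\asymp r_{i-1}$ and $r_i$ itself is a distance $\asymp r_{i-1}$ from $0$. Carrying this out rigorously means invoking the interval statements ``$x\in((j-1)r_i,jr_i)\Rightarrow\lp{x}>jm_i$'' together with their mirror images (for negative $x$, and for the remaining sign patterns of $\bar\ep_{i-1},\ep_i,\ep_{i+1}$) from the proof of Proposition \ref{prop:algcorrect}, and then tracking numerical constants so that the final bound comes out as $2^{-5(\aI+\aII)}$; the extreme indices $i=0$ (where $r_{-1}=N$) and $i=K$ (where $r_{K+1}=0$) need only minor, routine adjustments.
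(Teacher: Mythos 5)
Your proposal is correct and follows essentially the same route as the paper: the reduction $\cost{x_i}=\min_{\ell}\max\{|x_i-\ell s|^{\aI},|\ell|^{\aII}\}$ and the split at $|\ell|\approx m_{i+1}/2^5$ is the paper's dichotomy between many $\s_2$-steps and many $\s_1$-steps, with the same $q_{i+1}\ge 8$ case split, the same inputs from Lemma \ref{lem:catchall}, and the same reliance on the interval estimates inside the proof of Proposition \ref{prop:algcorrect} (which the paper itself invokes only via the picture). The only small imprecision is the claim $|jr_i|_s=jm_i$ for all $1\le j\le q_{i+1}$ — for $j$ past roughly $q_{i+1}/2$ the shorter route may wind the other way, so only $\lp{jr_i}=jm_i$ and the lower bound $|jr_i|_s\gtrsim\min\{j,q_{i+1}-j\}\,m_i$ are available — but that lower bound is all your constant chase actually uses.
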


\begin{proof}
  First we consider the case when $q_{i+1} \geq 8$, and thus $x_i = \floor{q_{i+1}/2} r_i$.
  Let $n_i = \floor{q_{i+1}/4}$.
  Consider the interval $[-r_{i-1}, r_{i-1}]$ with the points reachable using at most $n_i m_i$ large steps,
  and the two colors signify the large steps that were used with generators of the opposite sign.
  The particular picture uses $q_{i+1} = 10$:
  \begin{center}
  \begin{tikzpicture}[every node/.style={circle,inner sep=0pt,minimum size=4pt,outer sep=6pt,font=\scriptsize},
  fsnode/.style={fill,verte,font=},
  ssnode/.style={fill,bleu},
  bsnode/.style={fill,gray}
]
   \pgfmathsetmacro{\r}{40}

   \centerarc[](0,0)(80:100:\r);
   \node[fsnode] at (80:\r) {};
   \node[fill,black] at (90:\r) {};
   \node at (90:{\r+.5}) {$0$};
   \node at (80:{\r+.5}) {$r_{i-1}$};

   \node[fsnode] at (89:\r) {};
   \node at (89:{\r+.5}) {$r_{i}$};
   \node[fsnode] at (88:\r) {};
   \node at (88:{\r+.5}) {$2 r_i$};

   \node[ssnode] at (91:\r) {};
   \node[ssnode] at (92:\r) {};
   \node[ssnode] at (100:\r) {};
   \node at (100.5:{\r+.5}) {$- r_{i-1}$};

   \node[fsnode] at (99:\r) {};
   \node at (99:{\r+.5}) {$-9r_i$};
   \node[fsnode] at (98:\r) {};
   \node at (98:{\r+.5}) {$-8r_i$};

   \node[ssnode] at (81:\r) {};
   \node at (81:{\r+.5}) {$9r_i$};
   \node[ssnode] at (82:\r) {};
   \node at (82:{\r+.5}) {$8r_i$};

   \node[fill,magenta] at (85:\r) {};
   \node at (85:{\r+0.8}) {$x_i = 5r_i$};

   \node[bsnode] at (83:\r) {};
   \node[bsnode] at (84:\r) {};
   \node[bsnode] at (86:\r) {};
   \node[bsnode] at (87:\r) {};
   \node[bsnode] at (93:\r) {};
   \node[bsnode] at (94:\r) {};
   \node[bsnode] at (95:\r) {};
   \node[bsnode] at (96:\r) {};
   \node[bsnode] at (97:\r) {};
\end{tikzpicture}

  \end{center}
  Consider a path that $w$ that maps to $x_i$ under the standard projection.
  If $\deg_{\s} (w) \geq n_i m_i$ or more large steps,
  \begin{align*}
    n_i m_i &= \floor{\frac{q_{i+1}}{4}} m_i  \\
    & \geq \frac{q_{i+1} m_i}{2^3} \quad \text{(Lemma \ref{lem:catchall} (\ref{item:ratio}))} \\
    & \geq \frac{1}{2^5} m_{i+1} \quad \text{(Lemma \ref{lem:catchall} (\ref{item:qm}))}
  \end{align*}
  If $\deg_{\s} (w) \leq n_i m_i$, then
  \begin{align*}
    \deg_{1} (w) & \geq x_i - n_i r_i = \left( \floor{\frac{q_{i+1}}{2}} - \floor{\frac{q_{i+1}}{4}} \right) r_i \\
     &\geq \left( \frac{3}{4} \frac{q_{i+1}}{2} - \frac{q_{i+1}}{4} \right) r_i \quad \text{(Lemma \ref{lem:catchall} (\ref{item:ratio}))} \\
     &=  \frac{q_{i+1}}{2^3} r_i \\
     &\geq \frac{r_{i-1}}{2^5} \quad \text{(Lemma \ref{lem:catchall} (\ref{item:qr}).)} \qedhere
  \end{align*}

  Now consider consider the case when $q_{i+1} < 8$, and thus $x_i = r_i$.
  If we use fewer than $m_i$ large steps, then the number of small steps required is
  \[
    r_i \geq \frac{q_{i+1}r_i}{2^3} \geq \frac{r_{i-1}}{2^5}.
  \]
  If we can use $m_i$ large steps, then we can reach $r_i$. But
  \[
  m_i \geq \frac{q_{i+1}m_i}{2^3} \geq \frac{m_{i+1}}{2^5}. \qedhere
  \]
\end{proof}

\begin{proof}[Proof of the lower bound of Theorem \ref{thm:N1s-diameter}]
For the lower bound, let
\[
  L = \argmin_i \{ \max \{ r_{i}^\aI, m_{i+1}^\aII \} \}.
\]
Suppose $L = -1$. Then $N^\aI < q_1^\aII$ and
\begin{align*}
  \dia \geq \cost{x_0} \geq \frac{1}{2^{5(\aI+\aII)}} N^\aI.
\end{align*}

Now we can assume that $L \geq 0$,
\begin{description}
  \item[Case 1 ($r_L^\aI \leq m_{L+1}^\aII$) ]
    Then $m_{L+1}^\aII = \max \{ r_{L}^\aI, m_{L+1}^\aII \} \leq \max \{ r_{L-1}^\aI, m_{L}^\aII \} = r_{L-1}^\aI$,
    since the $m_i$'s are increasing.
    By substituting $i = L$ into Lemma \ref{lem:lowerbd-most}, we have that
    \[ \dia \geq \frac{1}{2^{5(\aI + \aII)}} \min \{ r_{L-1}^\aI, m_{L+1}^\aII \} \geq \frac{m_{L+1}^\aII}{2^{5(\aI + \aII)}}.\]
  \item[Case 2 ($r_L^\aI \geq m_{L+1}^\aII$) ]
    Then $r_L^\aI = \max \{ r_{L}^\aI, m_{L+1}^\aII \} \leq \max \{ r_{L+1}^\aI, m_{L+2}^\aII \} = m_{L+2}^\aII$,
    since the $r_i$'s are decreasing.
    By substituting $i = L+1$ into Lemma \ref{lem:lowerbd-most}, we have that
    \[ \dia \geq \frac{1}{2^{5(\aI + \aII)}} \min \{ r_{L}^\aI, m_{L+2}^\aII \} \geq \frac{r_{L}^\aI}{2^{5(\aI + \aII)}}. \qedhere\]
\end{description}
\end{proof}

\bibliographystyle{alpha}
\bibliography{nilgrp}

\end{document}